\documentclass[indent=latex,theme=print,review=false,fontsize=12pt]{acmhomework}

\newcommand{\hP}{\widehat{P}}

\addbibresource{main.bib}

\title{A Toolkit for Structured Lifts}

\author{Krzysztof Kapulkin \and Yufeng Li}

\begin{abstract}
  We develop a general framework for working with structured lifting problems, establishing closure and uniqueness properties of their solutions.
  In a subsequent paper, we apply these results to axiomatize computation rules of cubical type theory.
\end{abstract}

\begin{document}

\maketitle

\section*{Introduction}

When working with categorical models of dependent type theory, one often models pattern matching using compatible choices of solutions for a certain class of lifting problems, with the canonical example being the $\MsJ$-eliminator for identity types.
This is true verbatim for models such as contextual categories \cite{car86} and categories with families \cite{dyb96,awo18}, and is typically imposed on weaker models, like comprehension categories \cite{jac93}, type-theoretic fibration categories \cite{shu15}, and tribes \cite{joy17}, when describing their corresponding strict analogues.

Such compatible families of lifts arise naturally in the context of \emph{algebraic} weak factorization systems \cite{gt06,gar09} and algebraic model structures \cite{rie11}, a replacement of weak factorization systems and model structures that require that the factorizations and lifts not merely exist, but rather be chosen in a suitably compatible (hence algebraic) manner \cite{bg16}.
As a result, algebraic weak factorization systems have been used to supply models of dependent type theory \cite{bf22,awo+25}.
When working with the category of all models, however, their usefulness is limited due to the fact that the identity type factorization is in general not functorial \cite{gg08}.
In other words, the problem with using algebraic weak factorization systems for axiomatizing the structure present in the models is the ``factorization'' part of the weak factorization system.

One could in principle try to build a new algebraic weak factorization system on a model of dependent type theory, but two problems arise.
First, the standard tool for constructing algebraic weak factorization systems, the algebraic small object argument, requires the base category to admit certain colimits \cite{gar09}, which general models of type theory do not possess.
Second, even if it were possible to run the algebraic small object argument on the set of reflexivity maps, the resulting right class would not in general recover dependent projections.
All of this to say that algebraic weak factorization systems are simply too strong a framework to speak of models of dependent type theory.

In the present paper, we address these issues by developing the theory of structured lifts without the corresponding factorization requirements, and with minimal assumptions on the base category.
This is in contrast with more ``heavy duty'' conditions required to get the framework of algebraic weak factorization systems off the ground: for example, requiring that the two classes be given by (co-)algebras for a suitable (co-)monad on the base category.

We prove several closure properties of these lifts, for example, their compatibility with base change, composition, and Leibniz transpose.
We further investigate the uniqueness properties of these lifts.
All of these are done with an eye towards understanding computation rules of dependent type theory in general, and of cubical type theory in particular \cite{cchm15,ang+21}.
Indeed, for the three operations mentioned above: base change corresponds to the stability of eliminators under substitution, while composition and Leibniz transpose describe the compatibility between the filling operation and dependent sums and path types, respectively.
The uniqueness properties ensure that the computation rules hold in their expected (propositional) form.

By keeping track of exponentiable objects, all of our constructions can be formulated not only in the framework of universe category models of dependent type theory \cite{voe17,kl21}, but also in the framework of \citeauthor{uem23}'s categories with representable maps \cite{uem23}.

This paper is organized as follows.
In \cref{sec:struct-lift}, we introduce the requisite notions of lifts, structured lifts, and restricted lifts.
We also explain the relation between structured lifts and categorical models of type theory, formulated as Uemura's categories with representable maps.
In the subsequent four sections, we investigate different closure properties of structured lifts.
Specifically, in \cref{subsec:struct-lift-restrict}, we show that structured lifts can be restricted and composed; in \cref{subsec:struct-lift-rebase}, we prove their closure under base change; and in \cref{subsec:struct-ltrans}, we establish their closure under Leibniz transposes.
As a preliminary step towards \cref{subsec:struct-ltrans}, in \cref{sec:pushout-product-approx}, we design a way of speaking about pushout-product without appealing to any cocompleteness properties of the base category.
Finally, in \cref{sec:relating-lifts}, we describe a general framework for showing that two lifts are related, and we instantiate our results in a model category.


\section{Structured Lifts}\label{sec:struct-lift}
In this section, we will introduce the notion of uniform lifts.
At a high level, they are the uniform versions of lifting properties, where
the uniformity arise from functoriality of the product.
Throughout the rest of this paper, we fix a finitely complete category $\bC$
whose internal-Homs, when they exist, we denote by $[-,-]$ and whose
internal-Homs, when they exist, in each overslice $\sfrac{\bC}{C}$ we denote by
$[-,-]_C$.
The main example we have in mind are \citeauthor{uem23}'s categories with
representable maps (CwRs).

We first start by recalling what is a lifting problem and its associated
solution.
\begin{definition}
  A lifting problem of $U \to V$ against $E \to B$ is a pair of dashed maps
  $(u,v)$ as below making the square commute.
  A solution to the lifting problem $(u,v)$ is a diagonal filler $F$ to the
  square making the entire diagram commute.
  \begin{equation*}
    \begin{tikzcd}[cramped]
      U & E \\
      V & B
      \arrow[dashed, "u", from=1-1, to=1-2]
      \arrow[from=1-1, to=2-1]
      \arrow[from=1-2, to=2-2]
      \arrow[dotted, "F"{description}, from=2-1, to=1-2]
      \arrow[dashed, "v"', from=2-1, to=2-2]
    \end{tikzcd}
  \end{equation*}
\end{definition}

For ease of viewing, we color code by depicting lifting problems in
\textcolor{red0}{red}, the maps being lifted against each other in \textcolor{darkblue0}{blue}, and lifting solutions in \textcolor{yellow0}{green}.
Therefore, we would draw the above lifting problem and solution pair as
\begin{equation*}
  \begin{tikzcd}[cramped]
    U & E \\
    V & B
    \arrow[dashed, "u", color=red0, from=1-1, to=1-2]
    \arrow[from=1-1, to=2-1, color=darkblue0]
    \arrow[from=1-2, to=2-2, color=darkblue0]
    \arrow[dotted, "F"{description}, color=yellow0, from=2-1, to=1-2]
    \arrow[dashed, "v"', color=red0, from=2-1, to=2-2]
  \end{tikzcd}
\end{equation*}

Lifting solutions are dependent on the supplied lifting problem.
Therefore, given a family of lifting problems satisfying some form of
compatibility conditions, we can require the corresponding family of lifting
solutions to also be compatible.
In this paper, we are interested in the case when the compatibility conditions
arise from functoriality of the product.

\begin{definition}
  Fix maps $i \colon U \to V$ and $p \colon E \to B$.
  A \emph{family of lifts} is an association taking each object $X \in \bC$ and
  lifting problem $(u, v)$ of $X \times i$ against $p$ to a solution $F_X(u,v)$.
  \begin{equation*}
    \begin{tikzcd}[cramped]
      {X \times U} && E \\
      \\
      {X \times V} && B
      \arrow["u", color=red0, dashed, from=1-1, to=1-3]
      \arrow[draw=darkblue0, from=1-1, to=3-1]
      \arrow[draw=darkblue0, from=1-3, to=3-3]
      \arrow["{F_X(u,v)}"{description}, color=yellow0, dotted, from=3-1, to=1-3]
      \arrow["v"', color=red0, dashed, from=3-1, to=3-3]
    \end{tikzcd}
  \end{equation*}

  This family $F$ is said to be \emph{uniform} when one has
  $F_Y(u \cdot (t \times U), v \cdot (t \times V)) = F_X(u,v) \cdot (t \times
  V)$ for any $t \colon Y \to X$.
  \begin{equation*}
    \begin{tikzcd}[cramped]
      {Y \times U} &&& {X \times U} && E \\
      \\
      {Y \times V} &&& {X \times V} && B
      \arrow["{t \times U}", from=1-1, to=1-4]
      \arrow[color=darkblue0, from=1-1, to=3-1]
      \arrow["u", dashed, color=red0, from=1-4, to=1-6]
      \arrow[color=darkblue0, from=1-4, to=3-4]
      \arrow[color=darkblue0, from=1-6, to=3-6]
      \arrow["{F_Y(u \cdot (t \times U), v \cdot (t \times V))}"{description, pos=0.3}, color=yellow0, dotted, from=3-1, to=1-6]
      \arrow["{t \times V}"', from=3-1, to=3-4]
      \arrow["{F_X(u,v)}"{description}, color=yellow0, dotted, from=3-4, to=1-6]
      \arrow["v"', dashed, color=red0, from=3-4, to=3-6]
    \end{tikzcd}
  \end{equation*}
\end{definition}

In the above definitions, the dashed horizontal maps $u,v$ can be arbitrary.
However, we can also require that they factor through certain specific maps.
This gives rise to the notion of \emph{left-restricted} and
\emph{right-restricted} lifts and their uniform versions.

\begin{definition}
  A lifting problem of $U \to V$ against $E \to B$ \emph{left-restricted} along
  $V \to V'$ is a pair of dashed maps $(u,v')$ as below making the square on the
  left commute.
  A solution to this left-restricted lifting problem $(u,v)$ is a diagonal
  filler $F$ to the square making the entire diagram on the left commute.

  Dually, lifting problem of $U \to V$ against $E \to B$ \emph{right-restricted} along
  $B' \to B$ is a pair of dashed maps $(u,v)$ as above making the
  square on the right commute.
  A solution to this right-restricted lifting problem $(u,v)$ is a diagonal
  filler $F$ to the square making the entire diagram on the right commute.

  \begin{center}
    \begin{minipage}{0.45\linewidth}
      \begin{equation*}
        \begin{tikzcd}[cramped]
          U && E \\
          V & {V'} & B
          \arrow["u", color=red0, dashed, from=1-1, to=1-3]
          \arrow[color=darkblue0, from=1-1, to=2-1]
          \arrow[color=darkblue0, from=1-3, to=2-3]
          \arrow["F"{description}, color=yellow0, dotted, from=2-1, to=1-3]
          \arrow[color=red0, from=2-1, to=2-2]
          \arrow["{v'}"', color=red0, dashed, from=2-2, to=2-3]
        \end{tikzcd}
      \end{equation*}
    \end{minipage}
    \begin{minipage}{0.45\linewidth}
      \begin{equation*}
        \begin{tikzcd}[cramped]
          U && E \\
          V & {B'} & B
          \arrow["u", color=red0, dashed, from=1-1, to=1-3]
          \arrow[color=darkblue0, from=1-1, to=2-1]
          \arrow[color=darkblue0, from=1-3, to=2-3]
          \arrow["F"{description}, color=yellow0, dotted, from=2-1, to=1-3]
          \arrow["v"', color=red0, dashed, from=2-1, to=2-2]
          \arrow[color=red0, from=2-2, to=2-3]
        \end{tikzcd}
      \end{equation*}
    \end{minipage}
  \end{center}
\end{definition}

Parameterising over the choice of restricted lifting problems, we can similarly
define families of their lifting solutions and a corresponding uniformity
condition.

\begin{definition}\label{def:unif-lift}
  Fix maps $i \colon U \to V$ and $p \colon E \to B$.

  A family of lifts \emph{left-restricted} along $j \colon V \to V'$ is an
  association taking each object $X \in \bC$ and lifting problem $(u,v')$ of
  $X \times i$ left-restricted along $X \times j$ against $p$ to a solution
  $F_X(u,v')$.
  This family $F$ is said to be \emph{uniform} when one has one has
  $F_Y(u \cdot (t \times U), v' \cdot (t \times V')) = F_X(u,v') \cdot (t \times
  V)$ for any $t \colon Y \to X$.
  \begin{center}
    \begin{minipage}{0.45\linewidth}
      \begin{equation*}\small
        \begin{tikzcd}[cramped, column sep=small]
          {Y \times U} \\
          & {X \times U} &&&& E \\
          {Y \times V} && {Y \times V'} \\
          & {X \times V} && {X \times V'} && B
          \arrow[from=1-1, to=2-2]
          \arrow[color=red0, dashed, from=1-1, to=2-6]
          \arrow[from=1-1, to=3-1]
          \arrow["u"{description}, color=red0, dashed, from=2-2, to=2-6]
          \arrow[color=darkblue0, from=2-6, to=4-6]
          \arrow[color=yellow0, dotted, from=3-1, to=2-6]
          \arrow[color=red0, from=3-1, to=3-3]
          \arrow[from=3-1, to=4-2]
          \arrow[from=3-3, to=4-4]
          \arrow[color=red0, dashed, from=3-3, to=4-6]
          \arrow[color=yellow0, dotted, from=4-2, to=2-6]
          \arrow[color=red0, from=4-2, to=4-4]
          \arrow["v'"', color=red0, dashed, from=4-4, to=4-6]
          \arrow[color=darkblue0, from=2-2, to=4-2, crossing over]
        \end{tikzcd}
      \end{equation*}
    \end{minipage}
    \begin{minipage}{0.45\linewidth}
      \begin{equation*}
        \small
        \begin{tikzcd}[cramped, column sep=small]
          {Y \times U} \\
          & {X \times U} &&& E \\
          {Y \times V} \\
          & {X \times V} && {B'} & B
          \arrow[from=1-1, to=2-2]
          \arrow[color=red0, dashed, from=1-1, to=2-5]
          \arrow[color=darkblue0, from=1-1, to=3-1]
          \arrow["u"{description}, color=red0, dashed, from=2-2, to=2-5]
          \arrow[color=darkblue0, from=2-5, to=4-5]
          \arrow[color=yellow0, dotted, from=3-1, to=2-5]
          \arrow[from=3-1, to=4-2]
          \arrow[color=red0, dashed, from=3-1, to=4-4]
          \arrow[color=yellow0, dotted, from=4-2, to=2-5]
          \arrow["v"', color=red0, dashed, from=4-2, to=4-4]
          \arrow[color=red0, from=4-4, to=4-5]
          \arrow[color=darkblue0, from=2-2, to=4-2, crossing over]
        \end{tikzcd}
      \end{equation*}
    \end{minipage}
  \end{center}

  Dually, a family of lifts \emph{right-restricted} along $q \colon B' \to B$ is
  an association taking each object $X \in \bC$ and lifting problem $(u,v)$ of
  $X \times i$ right-restricted along $q$ against $p$ to a solution $F_X(u,v)$.
  This family is said to be \emph{uniform} when one has
  $F_Y(u \cdot (t \times U), v \cdot (t \times V)) = F_X(u,v) \cdot (t \times
  V)$ for any $t \colon Y \to X$.

  The set of all such left- and right-restricted uniform family of lifts, which
  we refer to as \emph{lifting structures} are respectively denoted by
  \begin{center}
    \begin{minipage}{0.45\linewidth}
      \begin{equation*}
        {\begin{tikzcd}[cramped, column sep=small, row sep=small]
            U \ar[d] \\ V \ar[r] & V'
          \end{tikzcd}}
        \squareslash
        {\begin{tikzcd}[cramped, column sep=small, row sep=small]
            E \ar[d] \\ B
          \end{tikzcd}}
      \end{equation*}
    \end{minipage}
    \begin{minipage}{0.45\linewidth}
      \begin{equation*}
        {\begin{tikzcd}[cramped, column sep=small, row sep=small]
            U \ar[d] \\ V
          \end{tikzcd}}
        \squareslash
        {\begin{tikzcd}[cramped, column sep=small, row sep=small]
            & E \ar[d] \\ B \ar[r] & B
          \end{tikzcd}}
      \end{equation*}
    \end{minipage}
  \end{center}
\end{definition}

\newsavebox{\dA}\newsavebox{\dAp}\newsavebox{\uB}\newsavebox{\uBr}\newsavebox{\uBp}\newsavebox{\uBpr}
\newsavebox{\Abot}\newsavebox{\Btop}\newsavebox{\dAr}

\begin{remark}
  The uniformity conditions of \Cref{def:unif-lift} can be phrased in terms of
  naturality conditions.

  \begin{lrbox}{\dA}{\scriptsize\begin{tikzcd}[column sep=small, row sep=small]
        \textcolor{darkblue0}{- \times U} \ar[d, color=darkblue0] \\
        \textcolor{darkblue0}{- \times V} &
        {\phantom{- \times V'}} \end{tikzcd}}
  \end{lrbox}
  \begin{lrbox}{\dAp}{\scriptsize\begin{tikzcd}[column sep=small, row sep=small]
        \textcolor{red0}{- \times U} \ar[d, color=red0] \\
        \textcolor{red0}{- \times V} \ar[r, color=red0] &
        \textcolor{red0}{- \times V'} \end{tikzcd}}
  \end{lrbox}
  \begin{lrbox}{\uB}{\scriptsize\begin{tikzcd}[column sep=small, row sep=small]
        \textcolor{darkblue0}{E} \ar[d, color=darkblue0] \\
        \textcolor{darkblue0}{B} \end{tikzcd}}
  \end{lrbox}
  \begin{lrbox}{\uBr}{\scriptsize\begin{tikzcd}[column sep=small, row sep=small]
        \textcolor{red0}{E} \ar[d, color=red0] \\
        \textcolor{red0}{B} \end{tikzcd}}
  \end{lrbox}
  \begin{lrbox}{\Abot}{\scriptsize\begin{tikzcd}[column sep=small, row sep=small]
        {\phantom{- \times \partial A}} \\ \textcolor{yellow0}{- \times V} & {\phantom{- \times A'}} \end{tikzcd}}
  \end{lrbox}
  \begin{lrbox}{\Btop}{\scriptsize\begin{tikzcd}[column sep=small, row sep=small]
        \textcolor{yellow0}{E} \\ \phantom{\underline{B}} \end{tikzcd}}
  \end{lrbox}

  We first consider the left-restricted uniform case with a map $U \to V$ on the
  left restricted along $V \to V'$ against $E \to B$ on the right.
  For each $X \in \bC$, the set of all unrestricted lifting problems of
  $X \times U \to X \times V$ against $E \to B$ is observed to be the Hom-set of
  the arrow category $\bC^\to(X \times U \to X \times V, E \to B)$.
  Letting $X$ vary, we obtain the presheaf of parameterised lifting problems.
  \begin{equation*}
    \textcolor{darkblue0}{\bC^\to\left(
        \usebox{\dA},
        \usebox{\uB}
      \right)}
  \end{equation*}
  Similarly, the set of lifting problems restricted along
  $X \times V \to X \times V'$ is observed to be the Hom-set
  $\bC^\to(X \times U \to X \times V \to X \times V', E \to B)$ and letting $X$
  vary then gives rise to the presheaf of parameterised restricted lifting
  problems.
  \begin{equation*}
    \textcolor{red0}{\bC^\to\left(
        \usebox{\dAp},
        \usebox{\uBr}
      \right)}
  \end{equation*}

  Given a restricted lifting problem
  $(u \colon X \times U \to E, v' \colon X \times V' \to B) \in \bC^\to(X \times
  U \to X \times V \to X \times V', E \to B)$, we can always forget the
  restriction and produce an unrestricted lifting problem
  $(u \colon X \times U \to E, v' \colon X \times V' \to B) \in
  \bC^\to(X \times U \to X \times V, E \to B)$.
  This operation is observed to be natural in $X$ so once again letting $X$ vary
  gives rise to a map between the two aforementioned presheaves.
  \begin{equation*}
    \begin{tikzcd}
      \textcolor{red0}{\bC^\to\left( \usebox{\dAp}, \usebox{\uBr} \right)}
      \ar[r, color=red0]
      &
      \textcolor{darkblue0}{\bC^\to\left( \usebox{\dA}, \usebox{\uB} \right)}
    \end{tikzcd}
  \end{equation*}

  On the other hand, a solution to each restricted lifting problem
  $(u \colon X \times U \to E, v' \colon X \times V' \to B) \in \bC^\to(X \times
  U \to X \times V \to X \times V', E \to B)$ is some choice of diagonal map
  $X \times V \to E$.
  Letting $X$ vary, the set of parameterised solution candidates is given by the
  presheaf
  \begin{equation*}
    \begin{tikzcd}[column sep=small]
      \textcolor{yellow0}{\bC\left( \usebox{\Abot}, \usebox{\Btop} \right)}
    \end{tikzcd}
  \end{equation*}
  Given such a diagonal map $X \times V \to E$, we can always recover the
  lifting problem it solves by pre-composing with $X \times U \to X \times V$
  and post-composing with $E \to B$ to obtain a map from
  $X \times U \to X \times V$ to $E \to B$.
  Therefore, we obtain a map taking each family of lifting solution candidate to
  the corresponding family of lifting problems it tries to solve.
  \begin{equation*}
    \begin{tikzcd}[column sep=small]
      \textcolor{yellow0}{\bC\left( \usebox{\Abot}, \usebox{\Btop} \right)}
      \ar[r, color=yellow0]
      &
      \textcolor{darkblue0}{\bC^\to\left( \usebox{\dA}, \usebox{\uB} \right)}
    \end{tikzcd}
  \end{equation*}

  So, to say that a lifting solution candidate indeed solves the lifting problem
  which it is assigned to, one requires a horizontal dotted map in the
  following.
  Therefore, the set of uniform left-restricted lifts is just the set of natural
  transformations in the folplowing slice category in $\widehat{\bC}$
  \begin{equation*}
    \begin{tikzcd}[column sep=small]
      \textcolor{red0}{\bC^\to\left( \usebox{\dAp}, \usebox{\uBr} \right)}
      \ar[rd, color=red0]
      \ar[rr, dotted]
      &
      &
      \textcolor{yellow0}{\bC\left( \usebox{\Abot}, \usebox{\Btop} \right)}
      \ar[ld, color=yellow0]
      \\
      &
      \textcolor{darkblue0}{\bC^\to\left( \usebox{\dA}, \usebox{\uB} \right)}
    \end{tikzcd}
  \end{equation*}

  Dually, given $B' \to B$, denote the set of right-restricted families of
  lifting problems as the presheaf
  \begin{equation*}
    {\color{red0}
      \bC^\to\left(
        {\scriptsize\begin{tikzcd}[cramped, column sep=small, row sep=small]
            - \times U \ar[d] \\ - \times V
          \end{tikzcd}},
        {\scriptsize\begin{tikzcd}[cramped, column sep=small, row sep=small]
            & E \ar[d] \\ B' \ar[r] & B
          \end{tikzcd}}
      \right)}
    \coloneqq
    \left(
    X \mapsto
    \left\{
      \begin{pmatrix}
        \begin{tikzcd}[cramped] X \times U \ar[r, "u", dashed, color=red0] & E \end{tikzcd}
        \\
        \begin{tikzcd}[cramped] X \times V \ar[r, "v"', dashed, color=red0] & B' \end{tikzcd}
      \end{pmatrix}
      ~\middle|~
      \begin{tikzcd}[column sep=small]
        X \times U
        \ar[rr, "u", dashed, color=red0]
        \ar[d, darkblue0] & &
        E
        \ar[d, darkblue0]
        \\
        X \times V
        \ar[r, "v"', dashed, color=red0]
        &
        B' \ar[r, color=red0]
        &
        B
      \end{tikzcd}
    \right\}\right) \in \widehat{\bC}
  \end{equation*}
  Then, the set of uniform right-restricted lifts along $B' \to B$ is the set of
  natural transformations in the following slice category in $\widehat{\bC}$.
  \begin{lrbox}{\dA}{\scriptsize\begin{tikzcd}[column sep=small, row sep=small]
        \textcolor{darkblue0}{- \times U} \ar[d, color=darkblue0] \\
        \textcolor{darkblue0}{- \times V} \end{tikzcd}}
  \end{lrbox}
  \begin{lrbox}{\dAr}{\scriptsize\begin{tikzcd}[column sep=small, row sep=small]
        \textcolor{red0}{- \times U} \ar[d, color=red0] \\
        \textcolor{red0}{- \times V} \end{tikzcd}}
  \end{lrbox}
  \begin{lrbox}{\uBpr}{\scriptsize\begin{tikzcd}[column sep=small, row sep=small]
        & \textcolor{red0}{E} \ar[d, color=red0] \\
        \textcolor{red0}{B'} \ar[r, color=red0]
        &
        \textcolor{red0}{B} \end{tikzcd}}
  \end{lrbox}
  \begin{lrbox}{\uBp}{\scriptsize\begin{tikzcd}[column sep=small, row sep=small]
        & \textcolor{darkblue0}{E} \ar[d, color=darkblue0] \\
        \phantom{B'} & \textcolor{darkblue0}{B} \end{tikzcd}}
  \end{lrbox}
  \begin{lrbox}{\Abot}{\scriptsize\begin{tikzcd}[column sep=small, row sep=small]
        {\phantom{- \times \partial A}} \\ \textcolor{yellow0}{- \times V} \end{tikzcd}}
  \end{lrbox}
  \begin{lrbox}{\Btop}{\scriptsize\begin{tikzcd}[column sep=small, row sep=small]
        & \textcolor{yellow0}{E} \\ \phantom{\underline{B'}} & \phantom{B} \end{tikzcd}}
  \end{lrbox}
  \begin{equation*}
    \begin{tikzcd}[column sep=small]
      \textcolor{red0}{\bC^\to\left( \usebox{\dAr}, \usebox{\uBpr} \right)}
      \ar[rd, color=red0]
      \ar[rr, dotted]
      &
      &
      \textcolor{yellow0}{\bC^\to\left( \usebox{\Abot}, \usebox{\Btop} \right)}
      \ar[ld, color=yellow0]
      \\
      &
      \textcolor{darkblue0}{\bC^\to\left( \usebox{\dA}, \usebox{\uBp} \right)}
    \end{tikzcd}
  \end{equation*}
\end{remark}

We observe that because $\bC$ is locally cartesian closed, the set of uniform
(restricted) lifts can be internalised using representability arguments as in
the following \Cref{lem:stable-left-rep,lem:stable-right-rep}.
\begin{lemma}\label{lem:stable-left-rep}
  Suppose one has maps $U \to V$ and $E \to B$ and $V \to V'$ between
  exponential objects.
  Then, one has representability of:
  \begin{lrbox}{\dA}{\scriptsize\begin{tikzcd}[column sep=small, row sep=small]
        \textcolor{darkblue0}{- \times U} \ar[d, color=darkblue0] \\
        \textcolor{darkblue0}{- \times V} &
        {\phantom{- \times V'}} \end{tikzcd}}
  \end{lrbox}
  \begin{lrbox}{\dAp}{\scriptsize\begin{tikzcd}[column sep=small, row sep=small]
        \textcolor{red0}{- \times U} \ar[d, color=red0] \\
        \textcolor{red0}{- \times V} \ar[r, color=red0] &
        \textcolor{red0}{- \times V'} \end{tikzcd}}
  \end{lrbox}
  \begin{lrbox}{\uB}{\scriptsize\begin{tikzcd}[column sep=small, row sep=small]
        \textcolor{darkblue0}{E} \ar[d, color=darkblue0] \\
        \textcolor{darkblue0}{B} \end{tikzcd}}
  \end{lrbox}
  \begin{lrbox}{\uBr}{\scriptsize\begin{tikzcd}[column sep=small, row sep=small]
        \textcolor{red0}{E} \ar[d, color=red0] \\
        \textcolor{red0}{B} \end{tikzcd}}
  \end{lrbox}
  \begin{lrbox}{\Abot}{\scriptsize\begin{tikzcd}[column sep=small, row sep=small]
        {\phantom{- \times \partial A}} \\ \textcolor{yellow0}{- \times V} & {\phantom{- \times A'}} \end{tikzcd}}
  \end{lrbox}
  \begin{lrbox}{\Btop}{\scriptsize\begin{tikzcd}[column sep=small, row sep=small]
        \textcolor{yellow0}{E} \\ \phantom{\underline{B}} \end{tikzcd}}
  \end{lrbox}
  \begin{enumerate}
    \item \emph{Restricted lifting problems.}
    \begin{equation*}
      \begin{tikzcd}
        \bC(-, \textcolor{red0}{[U, E] \times_{[U, B]} [V', B]})
        \ar[r, tail reversed, "\cong"{description}]
        & \textcolor{red0}{\bC^\to\left( \usebox{\dAp}, \usebox{\uBr} \right)}
      \end{tikzcd}
    \end{equation*}
    \item \emph{Unrestricted lifting problems.}
    \begin{equation*}
      \begin{tikzcd}
        \bC(-, \textcolor{darkblue0}{[U, E] \times_{[U, B]} [V, B]})
        \ar[r, tail reversed, "\cong"{description}]
        & \textcolor{darkblue0}{\bC^\to\left( \usebox{\dA}, \usebox{\uB} \right)}
      \end{tikzcd}
    \end{equation*}
    \item \emph{Lifting solutions.}
    \begin{equation*}
      \begin{tikzcd}
        \bC(-, \textcolor{yellow0}{[V,E]})
        \ar[r, tail reversed, "\cong"{description}]
        &
        \textcolor{yellow0}{\bC\left( \usebox{\Abot}, \usebox{\Btop} \right)}
      \end{tikzcd}
    \end{equation*}
  \end{enumerate}
\end{lemma}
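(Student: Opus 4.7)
The plan is to reduce all three claims to the exponential adjunction $\bC(X \times W, Y) \cong \bC(X, [W, Y])$, which is natural in $X$ and available whenever $W$ is exponentiable (as $U$, $V$, and $V'$ are by hypothesis). I would proceed in reverse numbering order, since the complexity accumulates from (3) to (1).

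For (3), a parameterised solution candidate is by definition a map $X \times V \to E$, and the exponential adjunction immediately produces the natural bijection with maps $X \to [V, E]$. No further work is required.

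For (2), I would transpose each leg of a lifting problem separately: $u \colon X \times U \to E$ yields $\tilde u \colon X \to [U, E]$, and $v \colon X \times V \to B$ yields $\tilde v \colon X \to [V, B]$. The key step is to verify that the commutativity condition $p \cdot u = v \cdot (X \times i)$ transposes to the equation $[U, p] \cdot \tilde u = [i, B] \cdot \tilde v$ in $\bC(X, [U, B])$; this follows from naturality of the adjunction in its two variables, which converts post-composition by $p$ into $[U, p]$ and pre-composition by $i$ into $[i, B]$. Pairs satisfying this equation correspond, by the universal property of the pullback $[U, E] \times_{[U, B]} [V, B]$ (whose existence is guaranteed by finite completeness of $\bC$), to maps $X \to [U, E] \times_{[U, B]} [V, B]$ naturally in $X$.

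For (1), the argument mirrors (2), except that the commutativity condition now involves the composite $X \times U \to X \times V \to X \times V' \to B$, which transposes to a composite $X \to [V', B] \to [U, B]$ where the second map is induced by pre-composition with the composite $U \to V \to V'$. The resulting pullback is therefore $[U, E] \times_{[U, B]} [V', B]$ as claimed. The only real bookkeeping — not a conceptual obstacle — is confirming in (2) that the two induced maps into $[U, B]$, namely $[U, p]$ and $[i, B]$, form precisely the intended pullback cone; once this is established, (1) is an immediate variation and (3) has no commutativity to check.
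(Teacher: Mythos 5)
Your proposal is correct and follows essentially the same route as the paper: both arguments reduce each item to the exponential adjunction $\bC(X \times W, Y) \cong \bC(X, [W,Y])$ together with the fact that $\bC(-,-)$ sends the pullback $[U,E] \times_{[U,B]} [V',B]$ (resp.\ $[V,B]$) to a pullback of hom-sets, identifying the result with commutative squares. The paper phrases this as continuity of the Yoneda embedding and leaves the transposition of the commutativity condition implicit, whereas you spell that bookkeeping out; the substance is identical.
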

\begin{proof}
  By continuity of the Yoneda embedding,
  \begin{align*}
    \bC(-, [U, E] \times_{[U, B]} [V', B])
    &\cong
    \bC(-, [U, E]) \times_{\bC(-, [U, B])} \bC(-, [V', B]) \\
    &\cong
      \bC(- \times U, E) \times_{\bC(- \times U, B)} \bC(- \times V', B)
  \end{align*}
  This is exactly the commutative squares from
  $- \times U \to - \times V'$ to $E \to B$, so one has the
  isomorphism in the first item.
  A similar argument gives the isomorphism in the second and third items.
\end{proof}
\begin{corollary}\label{cor:stable-left-rep}
  Suppose one has maps $U \to V$ and $E \to B$ and $V \to V'$ where $U,V,V'$ are
  exponentiable objects.
  Then, the set of left-restricted uniform lifts is just the Hom-set in the
  slice category.
  \begin{equation*}
    {\scriptsize\begin{tikzcd}[cramped, column sep=small, row sep=small]
        U \ar[d] \\ V \ar[r] & V'
      \end{tikzcd}}
    \squareslash
    {\scriptsize\begin{tikzcd}[cramped, column sep=small, row sep=small]
        E \ar[d] \\ B
      \end{tikzcd}} \cong
    \begin{tikzcd}
      \sfrac{\bC}{\textcolor{darkblue0}{[U, E] \times_{[U, B]} [V, B]}}
      (\textcolor{red0}{[U, E] \times_{[U, B]} [V', B]},
      \textcolor{yellow0}{[V,E]})
    \end{tikzcd}
  \end{equation*}
\end{corollary}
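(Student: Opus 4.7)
The plan is to internalise the presheaf description of uniform left-restricted lifts given in the preceding remark, and then apply the Yoneda lemma in a slice category of $\widehat{\bC}$.

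First, I would recall that by the preceding remark, the set of uniform left-restricted lifts is, by definition, the set of morphisms in the slice $\sfrac{\widehat{\bC}}{\bC^\to(-\times U \to -\times V, E \to B)}$ from the red presheaf of restricted lifting problems to the yellow presheaf of solution candidates (both regarded as objects over the blue presheaf of underlying unrestricted problems via the forgetful and the ``post-compose with $p$, pre-compose with $i$'' maps, respectively). Then I would invoke \Cref{lem:stable-left-rep} to replace each of these three presheaves by the representable presheaf on its representing object: the blue one by $[U,E] \times_{[U,B]} [V,B]$, the red one by $[U,E] \times_{[U,B]} [V',B]$, and the yellow one by $[V,E]$.

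The next step is to transport the slice structure across these representability isomorphisms. Because the Yoneda embedding $y \colon \bC \hookrightarrow \widehat{\bC}$ is fully faithful and preserves limits, it induces a fully faithful functor on slices $\sfrac{\bC}{X} \hookrightarrow \sfrac{\widehat{\bC}}{y(X)}$ whose essential image contains every representable over $y(X)$. Consequently, Hom-sets between representables in $\sfrac{\widehat{\bC}}{y([U,E]\times_{[U,B]}[V,B])}$ coincide with the Hom-sets in $\sfrac{\bC}{[U,E] \times_{[U,B]}[V,B]}$, which is precisely the right-hand side of the claimed isomorphism.

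The only nontrivial step is to check that the structure maps in $\bC$ that I use to equip the objects $[U,E] \times_{[U,B]}[V',B]$ and $[V,E]$ as objects over $[U,E] \times_{[U,B]}[V,B]$ really correspond, under Yoneda, to the red-to-blue and yellow-to-blue maps of the remark. For the former this is the evident map induced by $V \to V'$ and the identity on $[U,E]$, while for the latter it is the pairing $\langle [i,E], [V,p]\rangle$; both identifications are a routine naturality chase through the adjunction isomorphisms of \Cref{lem:stable-left-rep}. I do not expect any genuine obstacle here, as the argument is pure bookkeeping once the three representabilities are in hand.
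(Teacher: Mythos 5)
Your proposal is correct and follows essentially the same route as the paper, which simply declares the corollary immediate from \Cref{lem:stable-left-rep}; you have just made explicit the Yoneda/full-faithfulness step on slices and the identification of the structure maps that the paper leaves implicit. No gaps.
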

\begin{proof}
  Immediately by \Cref{lem:stable-left-rep}.
\end{proof}

Dually, we have an internalisation result on the right.
\begin{lemma}\label{lem:stable-right-rep}
  Suppose one has maps $U \to V$ and $E \to B$ and $B' \to B$ where $U,V$ are
  exponentiable objects.
  Then, one has representability of:
  \begin{lrbox}{\dA}{\scriptsize\begin{tikzcd}[column sep=small, row sep=small]
        \textcolor{darkblue0}{- \times U} \ar[d, color=darkblue0] \\
        \textcolor{darkblue0}{- \times V} \end{tikzcd}}
  \end{lrbox}
  \begin{lrbox}{\dAr}{\scriptsize\begin{tikzcd}[column sep=small, row sep=small]
        \textcolor{red0}{- \times U} \ar[d, color=red0] \\
        \textcolor{red0}{- \times V} \end{tikzcd}}
  \end{lrbox}
  \begin{lrbox}{\uBpr}{\scriptsize\begin{tikzcd}[column sep=small, row sep=small]
        & \textcolor{red0}{E} \ar[d, color=red0] \\
        \textcolor{red0}{B'} \ar[r, color=red0]
        &
        \textcolor{red0}{B} \end{tikzcd}}
  \end{lrbox}
  \begin{lrbox}{\uBp}{\scriptsize\begin{tikzcd}[column sep=small, row sep=small]
        & \textcolor{darkblue0}{E} \ar[d, color=darkblue0] \\
        \phantom{B'} & \textcolor{darkblue0}{B} \end{tikzcd}}
  \end{lrbox}
  \begin{lrbox}{\Abot}{\scriptsize\begin{tikzcd}[column sep=small, row sep=small]
        {\phantom{- \times \partial A}} \\ \textcolor{yellow0}{- \times V} \end{tikzcd}}
  \end{lrbox}
  \begin{lrbox}{\Btop}{\scriptsize\begin{tikzcd}[column sep=small, row sep=small]
        & \textcolor{yellow0}{E} \\ \phantom{\underline{B'}} & \phantom{B} \end{tikzcd}}
  \end{lrbox}
  \begin{enumerate}
    \item \emph{Restricted lifting problems.}
    \begin{equation*}
      \begin{tikzcd}
        \bC(-, \textcolor{red0}{[U, E] \times_{[U, B]} [V, B']})
        \ar[r, tail reversed, "\cong"{description}]
        &
        \textcolor{red0}{\bC^\to\left( \usebox{\dAr}, \usebox{\uBpr} \right)}
      \end{tikzcd}
    \end{equation*}
    \item \emph{Unrestricted lifting problems.}
    \begin{equation*}
      \begin{tikzcd}
        \bC(-, \textcolor{blue0}{[U, E] \times_{[U, B]} [V, B]})
        \ar[r, tail reversed, "\cong"{description}]
        &
        \textcolor{darkblue0}{\bC^\to\left( \usebox{\dA}, \usebox{\uBp} \right)}
      \end{tikzcd}
    \end{equation*}
    \item \emph{Lifting solutions.}
    \begin{equation*}
      \begin{tikzcd}
        \bC(-, \textcolor{yellow0}{[V,E]})
        \ar[r, tail reversed, "\cong"{description}]
        &
        \textcolor{yellow0}{\bC^\to\left( \usebox{\Abot}, \usebox{\Btop} \right)}
      \end{tikzcd}
    \end{equation*}
  \end{enumerate}
\end{lemma}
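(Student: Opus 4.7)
The proof will mirror the strategy of \Cref{lem:stable-left-rep} exactly, substituting $V'$ by $V$ on the source side and $B$ by $B'$ on the target side where appropriate. The only new ingredient is recognizing that what changes in the right-restricted setup is which vertex of the commutative square is allowed to land in an intermediate object rather than directly in $B$.

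For item~(1), I would begin with continuity of the Yoneda embedding applied to the pullback $[U,E] \times_{[U,B]} [V,B']$, obtaining
\begin{align*}
  \bC(-, [U,E] \times_{[U,B]} [V,B'])
  &\cong \bC(-,[U,E]) \times_{\bC(-,[U,B])} \bC(-,[V,B']) \\
  &\cong \bC(- \times U, E) \times_{\bC(-\times U, B)} \bC(- \times V, B'),
\end{align*}
where the second iso uses the exponential adjunctions (legitimate since $U$ and $V$ are exponentiable). The resulting pullback encodes exactly a pair $(u \colon X \times U \to E,\ v \colon X \times V \to B')$ whose composites to $B$ via $p$ and via $B' \to B$ respectively agree on $X \times U \to X \times V$, which is exactly a right-restricted lifting problem.

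Items~(2) and~(3) are handled the same way: for~(2), continuity plus two exponential adjunctions turn $[U,E] \times_{[U,B]} [V,B]$ into pairs of maps $X \times U \to E$ and $X \times V \to B$ agreeing over $B$; for~(3), a single adjunction gives $\bC(-, [V,E]) \cong \bC(- \times V, E)$, which is the set of candidate diagonal fillers.

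There is no real obstacle here — the argument is a routine dualization of \Cref{lem:stable-left-rep}. The only small bookkeeping point is to ensure that the exponentiability hypotheses match the adjunctions being invoked: $U$ and $V$ are exponentiable as assumed, and we do not need $B'$ to be exponentiable because it only appears inside an internal-Hom as a codomain, not as a domain. Once this is checked, the three isomorphisms follow from the same two-step (Yoneda continuity, then exponential adjunction) argument used in the left case, so the proof can simply cite \Cref{lem:stable-left-rep}'s argument verbatim.
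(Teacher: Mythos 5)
Your proposal is correct and matches the paper's proof, which simply cites the argument of \Cref{lem:stable-left-rep} verbatim: Yoneda continuity on the pullback followed by the exponential adjunctions for $U$ and $V$. Your remark that $B'$ need not be exponentiable because it only appears as a codomain of an internal-Hom is accurate and a nice touch of bookkeeping.
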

\begin{proof}
  Identical argument by representability as in \Cref{lem:stable-left-rep}.
\end{proof}

\begin{corollary}\label{cor:stable-right-rep}
  Suppose one has maps $U \to V$ and $E \to B$ and $B' \to B$ where $U,V$ are
  exponentiable objects.
  Then, the set of left-restricted uniform lifts is just the Hom-set in the
  slice category.
  \begin{equation*}
    {\scriptsize\begin{tikzcd}[cramped, row sep=small, column sep=small]
        U \ar[d] \\ V
      \end{tikzcd}}
    \squareslash
    {\scriptsize\begin{tikzcd}[cramped, row sep=small, column sep=small]
        & E \ar[d] \\ B' \ar[r] & B
      \end{tikzcd}} \cong
    \begin{tikzcd}
      \sfrac{\bC}{\textcolor{blue0}{[U, E] \times_{[U, B]} [V, B]}}
      (\textcolor{red0}{[U, E] \times_{[U, B]} [V, B']}, \textcolor{yellow0}{[V,E]})
    \end{tikzcd}
  \end{equation*}
\end{corollary}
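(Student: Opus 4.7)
The plan is to reduce the statement to \Cref{lem:stable-right-rep} via the Yoneda embedding, mirroring the proof sketch of \Cref{cor:stable-left-rep}. By the remark preceding the lemma, a right-restricted uniform lift along $B' \to B$ is defined as a morphism in $\sfrac{\widehat{\bC}}{P}$ from $R$ to $S$, where $P$ is the presheaf of unrestricted lifting problems, $R$ is the presheaf of right-restricted lifting problems, and $S$ is the presheaf of lifting solution candidates. So I would first reinterpret the set on the left as this Hom-set in the slice of the presheaf category.

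Next, I would invoke each of the three items of \Cref{lem:stable-right-rep} in turn to identify the presheaves $P$, $R$, and $S$ as representables, namely $\bC(-, [U,E] \times_{[U,B]} [V,B])$, $\bC(-, [U,E] \times_{[U,B]} [V,B'])$, and $\bC(-, [V,E])$, respectively. I would also need to check that the structure maps $R \to P$ and $S \to P$ coming from the forgetful and post-composition operations, once transported across these representability isomorphisms, correspond to the evident maps between the representing objects in $\bC$. This is straightforward to verify using the naturality of the Yoneda isomorphism and the construction of pullbacks in \Cref{lem:stable-right-rep}.

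Finally, I would use full faithfulness of the Yoneda embedding $\bC \hookrightarrow \widehat{\bC}$ together with the fact that it preserves limits (and so slices over representables correspond): a morphism in $\sfrac{\widehat{\bC}}{\bC(-,P_0)}$ between objects induced by $R_0, S_0 \to P_0$ in $\bC$ is exactly a morphism in $\sfrac{\bC}{P_0}$ from $R_0$ to $S_0$. Applying this with $P_0 = [U,E] \times_{[U,B]} [V,B]$, $R_0 = [U,E] \times_{[U,B]} [V,B']$, and $S_0 = [V,E]$ yields the claimed isomorphism.

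The only real content beyond citing \Cref{lem:stable-right-rep} is the verification that the structural maps of the slice match up correctly; this is the main (but minor) obstacle, and it follows routinely from tracing the definitions of forgetting a restriction and whiskering a solution candidate through the Yoneda isomorphisms of the lemma.
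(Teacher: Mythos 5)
Your argument is exactly the one the paper intends: the corollary is deduced from \Cref{lem:stable-right-rep} by transporting the three representability isomorphisms across the slice and invoking full faithfulness of the Yoneda embedding, precisely as in the proof of \Cref{cor:stable-left-rep}. The extra care you take in checking that the structure maps $R \to P$ and $S \to P$ correspond to the evident maps between representing objects is a correct and welcome elaboration of what the paper leaves implicit.
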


As an immediate consequence of internalisation, we obtain the following result on the existence of
stable lifting structures.
\begin{theorem}\label{thm:struct-lift-sect}
  Let $\bC$ a locally cartesian closed category model category in which the
  pullback-power of (trivial cofibration, fibration)- or (cofibration, trivial
  fibration)-pairs admit sections.
  Then, the sets of structured lifts
  \begin{center}
    \begin{minipage}{0.45\linewidth}
      \begin{equation*}
        {\begin{tikzcd}[cramped, column sep=small, row sep=small]
            U \ar[d] \\ V \ar[r] & V'
          \end{tikzcd}}
        \squareslash
        {\begin{tikzcd}[cramped, column sep=small, row sep=small]
            E \ar[d] \\ B
          \end{tikzcd}}
      \end{equation*}
    \end{minipage}
    \begin{minipage}{0.45\linewidth}
      \begin{equation*}
        {\begin{tikzcd}[cramped, row sep=small, column sep=small]
            U \ar[d] \\ V
          \end{tikzcd}}
        \squareslash
        {\begin{tikzcd}[cramped, row sep=small, column sep=small]
            & E \ar[d] \\ B' \ar[r] & B
          \end{tikzcd}}
      \end{equation*}
    \end{minipage}
  \end{center}
  and are non-empty as long as $(U \to V, E \to B)$ is either a (trivial
  cofibration, fibration)- or a (cofibration, trivial fibration)-pair.
\end{theorem}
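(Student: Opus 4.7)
The plan is to use the representability results \Cref{cor:stable-left-rep} and \Cref{cor:stable-right-rep} to reduce the existence of structured lifts to the existence of a section of the pullback-power, which is exactly what the hypothesis supplies.

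First, since $\bC$ is locally cartesian closed, every object is exponentiable, so both corollaries apply. In the left-restricted case the corollary identifies the set of structured lifts with the Hom-set in $\sfrac{\bC}{[U,E] \times_{[U,B]} [V,B]}$ from $[U,E] \times_{[U,B]} [V',B]$ to $[V,E]$. Producing a structured lift is therefore equivalent to exhibiting a single map $[U,E] \times_{[U,B]} [V',B] \to [V,E]$ in $\bC$ that commutes with the two projections to $[U,E] \times_{[U,B]} [V,B]$.

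To construct such a map, I would take the section $s \colon [U,E] \times_{[U,B]} [V,B] \to [V,E]$ of the pullback-power $[V,E] \to [U,E] \times_{[U,B]} [V,B]$ provided by the hypothesis (applied to the given (trivial cofibration, fibration)- or (cofibration, trivial fibration)-pair $(U \to V, E \to B)$), and precompose it with the canonical map $[U,E] \times_{[U,B]} [V',B] \to [U,E] \times_{[U,B]} [V,B]$ induced by $V \to V'$. The resulting composite is a morphism in the required slice category precisely because $s$ is a section of the pullback-power. The right-restricted case is entirely dual: apply \Cref{cor:stable-right-rep} and precompose the same kind of section with the canonical map $[U,E] \times_{[U,B]} [V,B'] \to [U,E] \times_{[U,B]} [V,B]$ induced by $B' \to B$.

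I do not anticipate a substantive obstacle: the theorem is engineered so that, once the internalisation of the corollaries is in hand, the only input required is a section of the pullback-power, and the verification that the above composites descend to morphisms in the appropriate slice is immediate from the defining property of a section. The only mild care needed is in naming the correct pullback-power (on $V \to V'$ one still sections $[V,E] \to [U,E] \times_{[U,B]} [V,B]$, not a pullback-power along $V'$) and in checking that the canonical maps induced by $V \to V'$ and $B' \to B$ have the expected codomain, both of which are routine.
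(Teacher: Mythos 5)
Your proposal is correct and takes essentially the same route as the paper, whose proof is simply a citation of the representability results \Cref{lem:stable-left-rep,lem:stable-right-rep}; you have filled in the intended argument (reduce to a slice Hom-set via internalisation, then precompose the hypothesized section of the pullback-power $[V,E] \to [U,E] \times_{[U,B]} [V,B]$ with the canonical comparison map induced by $V \to V'$ or $B' \to B$). Your remark that local cartesian closure makes every object exponentiable, so the corollaries apply without further hypothesis, is also the right observation.
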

\begin{proof}
  By \Cref{lem:stable-left-rep,lem:stable-right-rep}.
\end{proof}

\subsection*{Lifting Structures in CwRs}
In \citeauthor{uem23}'s framework of categories with representable maps (CwRs),
a type theory is \emph{defined} to be a CwR \cite[Definition 3.2.3]{uem23}.
The internalisation results of \Cref{cor:stable-left-rep,cor:stable-right-rep}
now allows one to freely extend a CwR $\bT$ with a lifting structure.

In order to state this construction, we recall some preliminaries about CwRs
from \cite{uem23,jel25}.

\begin{definition}[{\cite[Definition 2.3.1]{jel25}, cf.~\cite[Definition
    3.2.1]{uem23}}]\label{def:cwr}
  A \emph{category with representable maps (CwR)} is a category $\bC$ with
  finite limits equipped with a replete wide subcategory of pullback-stable
  class $\MsR_\bC$ of exponentiable maps called the \emph{representable maps}.

  A map of CwRs is a map between their underlying categories finite limits,
  representable maps, and pushforwards along representable maps.

  Denote by $\CwR$ the 2-category of (small) CwRs, maps of CwRs, and natural
  transformations whose square at representable maps are pullbacks.
\end{definition}

\begin{definition}\label{def:marked-cat}
  Denote by $\Cat_\Msm$ the 2-category of (small) categories with marked maps
  and squares.

  That is, a marked category is a category $\bC$ equipped with two specific
  choices of replete wide subcategories $\McM_\bC \hookrightarrow \bC$ and
  $\McS_\bC \hookrightarrow \bC^\to$.
  A 1-cell between marked categories is exactly a functor between underlying
  categories sending marked maps and squares to marked maps and squares.
  A 2-cell between 1-cells of marked categories is a natural transformation
  whose naturality square at marked maps are pullbacks.
\end{definition}

The representable maps of a CwR gives it the structure of a marked category by
taking the marked maps as representable maps and marked squares as pullback
squares.
Conversely, each marked category freely gives rise to a CwR due to the following
result by \citeauthor{jel25}.

\begin{theorem}[{\cite[Corollaries 3.2.16 and 3.2.17]{jel25}}]
  The forgetful 2-functor $\abs{-} \colon \CwR \to \Cat_\Msm$ has a left
  biadjoint $\vbr{-} \colon \Cat_\Msm \to \CwR$ and $\CwR$ has all bicolimits.
  \def\endingmark{\qedsymbol}
\end{theorem}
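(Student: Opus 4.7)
The plan follows the strategy of the cited reference; the statement has two parts (left biadjoint and bicolimits), and I would tackle them in order since the second relies on the first via a 2-monadicity-style argument.

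\emph{Constructing $\vbr{-}$.} Given a marked category $(\bC, \McM_\bC, \McS_\bC)$, I would build $\vbr{\bC}$ as the free CwR on $\bC$. Syntactically, its objects are formal expressions built from objects of $\bC$ by repeatedly applying dependent sums (composition with marked maps in the domain) and dependent products (pushforwards along marked maps), while its morphisms are generated by those of $\bC$ together with the canonical structural maps (units, counits, Beck--Chevalley isomorphisms), quotiented by the equations that the embedding preserves finite limits of $\bC$, that marked squares become pullback squares, and that pushforwards along marked maps are right adjoints to pullback. The unit $\eta_\bC \colon \bC \to \abs{\vbr{\bC}}$ embeds the generators. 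For the universal property, given a marked-category map $F \colon \bC \to \abs{\bD}$ into a CwR $\bD$, the extension $\tilde F \colon \vbr{\bC} \to \bD$ is defined by induction on the generating syntax, interpreting each formal construction using the actual CwR structure of $\bD$; essential uniqueness follows because every CwR map must respect pushforwards along representable maps up to canonical isomorphism.

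\emph{Bicolimits in $\CwR$.} The 2-category $\Cat_\Msm$ inherits bicolimits from $\Cat$: the marked subcategories of the underlying bicolimit are obtained by closing under the images of the marked data of the diagram. Given the biadjunction, a standard 2-dimensional monadicity argument takes over. Writing $T = \abs{\vbr{-}}$ for the induced 2-monad on $\Cat_\Msm$, one identifies $\CwR$ with the 2-category of pseudo $T$-algebras. Bicolimits in a 2-category of algebras over a bicocomplete base can then be constructed as reflexive bicoequalizers of free algebras: every CwR $\bD$ admits the canonical presentation $\vbr{\abs{\vbr{\abs{\bD}}}} \rightrightarrows \vbr{\abs{\bD}} \to \bD$, and bicolimits of free CwRs are transported from bicolimits in $\Cat_\Msm$ via $\vbr{-}$.

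\emph{Main obstacle.} The principal difficulty is the coherence problem underlying $\vbr{\bC}$: verifying that the freely adjoined pushforwards, pullbacks, and the identifications forcing marked squares into pullback squares all fit together consistently, so that the resulting object is genuinely a CwR rather than a category with extra structure. Rather than grind through explicit syntactic normal forms, the cleaner route---presumably the one taken in the cited reference---is to exhibit $\CwR$ as the 2-category of models of a generalised algebraic theory (or a suitable sketch) and appeal to abstract existence theorems for free models and bicolimits of models.
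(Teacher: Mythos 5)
This statement is not proved in the paper at all: it is quoted from \cite[Corollaries 3.2.16 and 3.2.17]{jel25}, and the theorem environment is closed with a QED symbol precisely to signal that no proof is supplied. There is therefore no in-paper argument to compare your proposal against; what follows assesses your sketch as a reconstruction of the cited result.

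Your outline points at the right machinery but asserts, rather than establishes, the two steps that carry all the weight. First, identifying $\CwR$ with the 2-category of pseudo-algebras for the 2-monad $T = \abs{\vbr{-}}$ is exactly the content of a bi-monadicity theorem and requires verifying a Beck-type condition; it does not follow from the mere existence of the biadjunction. It is further complicated by the fact that the 2-cells of $\CwR$ are restricted (naturality squares at representable maps must be pullbacks), so $\CwR$ is not the full 2-category of algebras and all transformations, and the monadicity statement has to be formulated relative to the correspondingly restricted 2-cells of $\Cat_\Msm$. Second, building bicolimits as reflexive bicoequalizers of free algebras presupposes that $\CwR$ already has reflexive bicoequalizers, which is not free; the usual escape is a Blackwell--Kelly--Power style theorem that pseudo-algebras for an accessible 2-monad on a locally presentable base form a bicocomplete 2-category --- essentially the ``abstract existence theorem'' you gesture at in your closing paragraph. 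If that is the intended route, the accessibility of $T$, the bi-monadicity of $\abs{-}$, and the restricted-2-cell issue are the points that must be made precise before the argument closes.
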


We now use this result to freely extend CwRs with lifting structures, which
amounts to formally adjoining a map between objects in a slice category.

\begin{construction}\label{constr:cwr-free-quotient}
  Let $\bC$ be a CwR.
  Fix two maps $\bI \to \bJ$ and $\bI \to \abs{\bC}$ of
  marked categories.
  Then, we write $\bC \cup_\bI \bJ$ for the following bipushout in $\CwR$
  \begin{equation*}
    \begin{tikzcd}[cramped]
      {\vbr{\bI}} & \bC \\
      {\vbr{\bJ}} & {\bC \cup_\bI \bJ}
      \arrow[from=1-1, to=1-2]
      \arrow[from=1-1, to=2-1]
      \arrow[from=1-2, to=2-2]
      \arrow[from=2-1, to=2-2]
      \arrow["\lrcorner"{anchor=center, pos=0.15, scale=1.5, rotate=180}, draw=none, from=2-2, to=1-1]
    \end{tikzcd}
  \end{equation*}
  where the map $\vbr{\bI} \to \bC$ in the top row is the
  $(\vbr{-} \dashv \abs{-})$-transpose of the map $\bI \to \abs{\bC}$ and map in
  the left row is the image of $\bI \to \bJ$ under $\vbr{-}$, as per
  \cite[Corollaries 3.2.16 and 3.2.17]{jel25}.
\end{construction}

\begin{construction}\label{constr:free-lift-ext}
  Let $\bC$ be a CwR and fix maps $U \to V \to V'$ between exponential objects
  and $E \to B$ in $\bC$.
  Define the CwR obtained from $\bC$ by freely extending with a lifting
  structure of $U \to V$ against $E \to B$ restricted along $V \to V'$ to be the
  following CwR obtained by applying \Cref{constr:cwr-free-quotient}
  \begin{equation*}
    \bC\left[
      {\scriptsize\begin{tikzcd}[column sep=small, row sep=small]
          U \ar[d] \\ V \ar[r] & V'
        \end{tikzcd}}
      \squareslash
      {\scriptsize\begin{tikzcd}[column sep=small, row sep=small]
          E \ar[d] \\ B
        \end{tikzcd}}
    \right]
    \coloneqq \bC
    \cup_{\set{\textcolor{red0}{\bullet \to {}} \textcolor{darkblue0}{\bullet} \textcolor{yellow0}{{} \leftarrow \bullet}}}
    \left\{
      {\scriptsize\begin{tikzcd}[column sep=small, row sep=small]
        \textcolor{red0}{[U,E] \times_{[U,B]} [V',B]} \ar[d, color=red0]
        \ar[r]
        &
        \textcolor{yellow0}{[V,E]} \ar[ld, color=yellow0]
        \\
        \textcolor{darkblue0}{[U,E] \times_{[U,B]} [V,B]}
      \end{tikzcd}}
    \right\}
  \end{equation*}
  This bipushout freely adds a map
  $\textcolor{red0}{[U,E] \times_{[U,B]} [V',B]} \to \textcolor{yellow0}{[V,E]}$
  over $\textcolor{darkblue0}{[U,E] \times_{[U,B]} [V',B]}$ to $\bC$ in $\CwR$.

  Dually, if $U \to V$ is a map between exponentiable objects in $\bC$ and one
  has maps $E \to B$ along with $B' \to B$ then the CwR with a formal lifting
  structure of $U \to V$ against $E \to B$ restricted along $B' \to B$ is
  obtained by the bipushout in $\CwR$ obtained by \Cref{constr:cwr-free-quotient}.
  \begin{equation*}
    \bC\left[
      {\scriptsize\begin{tikzcd}[column sep=small, row sep=small]
          U \ar[d] \\ V
        \end{tikzcd}}
      \squareslash
      {\scriptsize\begin{tikzcd}[column sep=small, row sep=small]
          & E \ar[d] \\ B' \ar[r] & B
        \end{tikzcd}}
    \right]
    \coloneqq \bC
    \cup_{\set{\textcolor{red0}{\bullet \to {}} \textcolor{darkblue0}{\bullet} \textcolor{yellow0}{{} \leftarrow \bullet}}}
    \left\{
      {\scriptsize\begin{tikzcd}[column sep=small, row sep=small]
          \textcolor{red0}{[U,E] \times_{[U,B]} [V,B']} \ar[d, color=red0]
          \ar[r]
          &
          \textcolor{yellow0}{[V,E]} \ar[ld, color=yellow0]
          \\
          \textcolor{darkblue0}{[U,E] \times_{[U,B]} [V,B]}
        \end{tikzcd}}
    \right\}
  \end{equation*}
  \newsavebox{\UVVp}
  \begin{lrbox}{\UVVp}
    \scriptsize\begin{tikzcd}[column sep=small, row sep=small]
      U \ar[d] \\ V \ar[r] & V'
    \end{tikzcd}
  \end{lrbox}
  \newsavebox{\EB}
  \begin{lrbox}{\EB}
    \scriptsize\begin{tikzcd}[column sep=small, row sep=small]
      E \ar[d] \\ B
    \end{tikzcd}
  \end{lrbox}
  \newsavebox{\ResLP}
  \begin{lrbox}{\ResLP}
    \scriptsize\begin{tikzcd}[column sep=small, row sep=small]
      \textcolor{red0}{[U,E] \times_{[U,B]} [V',B]} \ar[d, color=red0]
      &
      \textcolor{yellow0}{[V,E]} \ar[ld, color=yellow0]
      \\
      \textcolor{darkblue0}{[U,E] \times_{[U,B]} [V,B]}
    \end{tikzcd}
  \end{lrbox}
  \newsavebox{\ResLPSol}
  \begin{lrbox}{\ResLPSol}
    \scriptsize\begin{tikzcd}[column sep=small, row sep=small]
      \textcolor{red0}{[U,E] \times_{[U,B]} [V',B]} \ar[d, color=red0]
      \ar[r]
      &
      \textcolor{yellow0}{[V,E]} \ar[ld, color=yellow0]
      \\
      \textcolor{darkblue0}{[U,E] \times_{[U,B]} [V,B]}
    \end{tikzcd}
  \end{lrbox}
  %
  %
  %
  \newsavebox{\UV}
  \begin{lrbox}{\UV}
    \scriptsize\begin{tikzcd}[column sep=small, row sep=small]
      U \ar[d] \\ V
    \end{tikzcd}
  \end{lrbox}
  \newsavebox{\EBBp}
  \begin{lrbox}{\EBBp}
    \scriptsize\begin{tikzcd}[column sep=small, row sep=small]
      & E \ar[d] \\ B' \ar[r] & B
    \end{tikzcd}
  \end{lrbox}
  \newsavebox{\RResLP}
  \begin{lrbox}{\RResLP}
    \scriptsize\begin{tikzcd}[column sep=small, row sep=small]
      \textcolor{red0}{[U,E] \times_{[U,B]} [V,B']} \ar[d, color=red0]
      &
      \textcolor{yellow0}{[V,E]} \ar[ld, color=yellow0]
      \\
      \textcolor{darkblue0}{[U,E] \times_{[U,B]} [V,B]}
    \end{tikzcd}
  \end{lrbox}
  \newsavebox{\RResLPSol}
  \begin{lrbox}{\RResLPSol}
    \scriptsize\begin{tikzcd}[column sep=small, row sep=small]
      \textcolor{red0}{[U,E] \times_{[U,B]} [V,B']} \ar[d, color=red0] \ar[r]
      &
      \textcolor{yellow0}{[V,E]} \ar[ld, color=yellow0]
      \\
      \textcolor{darkblue0}{[U,E] \times_{[U,B]} [V,B]}
    \end{tikzcd}
  \end{lrbox}
  %
  %
\end{construction}

\begin{theorem}\label{thm:free-lift-ext}
  Let $\bC$ be a CwR and fix maps $U \to V \to V'$ between representable objects
  (i.e. objects whose maps into the terminal object are representable) and
  $E \to B$ in $\bC$.
  Then, isomorphism classes of maps as on the left below correspond bijectively
  to isomorphism classes of maps $F \colon \bC \to \bD \in \CwR$ equipped with a
  choice of a lifting structure of $FU \to FV$ against $FE \to FB$ restricted
  along $FV \to FV'$.
  \begin{center}
    \begin{minipage}{0.45\linewidth}
      \begin{equation*}
        \bC\left[
          {\scriptsize\begin{tikzcd}[column sep=small, row sep=small]
              U \ar[d] \\ V \ar[r] & V'
            \end{tikzcd}}
          \squareslash
          {\scriptsize\begin{tikzcd}[column sep=small, row sep=small]
              E \ar[d] \\ B
            \end{tikzcd}}
        \right] \to \bD \in \CwR
      \end{equation*}
    \end{minipage}
    \begin{minipage}{0.45\linewidth}
      \begin{equation*}
        \bC\left[
          {\scriptsize\begin{tikzcd}[column sep=small, row sep=small]
              U \ar[d] \\ V \ar[r] & V'
            \end{tikzcd}}
          \squareslash
          {\scriptsize\begin{tikzcd}[column sep=small, row sep=small]
              E \ar[d] \\ B
            \end{tikzcd}}
        \right] \to \bD \in \CwR
      \end{equation*}
    \end{minipage}
  \end{center}
  Dually, given $B' \to B$ in $\bC$, isomorphism classes of maps as on the right
  above correspond to isomorphism classes of maps
  $F \colon \bC \to \bD \in \CwR$ equipped with a choice of a lifting structure
  of $FU \to FV$ against $FE \to FB$ restricted along $FB' \to FB$.
\end{theorem}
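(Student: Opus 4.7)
The plan is to unpack the bipushout from \Cref{constr:cwr-free-quotient} using the biadjunction $\vbr{-} \dashv \abs{-}$ and identify the resulting data with a lifting structure via \Cref{cor:stable-left-rep,cor:stable-right-rep}. Concretely, by the universal property of the bipushout in $\CwR$, a map out of the free extension to $\bD$ is, up to isomorphism, a pair consisting of a map $F \colon \bC \to \bD$ in $\CwR$ together with a map $\vbr{\bJ} \to \bD$ whose restriction along $\vbr{\bI} \to \vbr{\bJ}$ agrees with the composite $\vbr{\bI} \to \bC \to \bD$, where $\bI$ denotes the cospan $\{\bullet \to \bullet \leftarrow \bullet\}$ and $\bJ$ its completion to a commutative triangle. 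Transposing along the adjunction, such data is equivalent to $F$ together with a map of marked categories $\bJ \to \abs{\bD}$ extending the composite $\bI \to \abs{\bC} \to \abs{\bD}$ induced by $F$.

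Since $\bJ$ adds a single arrow to the cospan $\bI$, extending such a map from $\bI$ to $\bJ$ amounts to choosing a single morphism $F([U,E] \times_{[U,B]} [V',B]) \to F[V,E]$ over $F([U,E] \times_{[U,B]} [V,B])$ in $\bD$, i.e., a morphism in the slice of $\bD$ over $F([U,E] \times_{[U,B]} [V,B])$. Because $F$ is a CwR map it preserves finite limits and pushforwards along representable maps (\Cref{def:cwr}), and because $U, V, V'$ are representable the canonical comparison maps $[FU, FE] \xrightarrow{\sim} F[U,E]$ (and analogously for $V$ and $V'$) are isomorphisms. Transporting along these identifies the chosen morphism with a map in $\sfrac{\bD}{[FU, FE] \times_{[FU, FB]} [FV, FB]}$ from $[FU, FE] \times_{[FU, FB]} [FV', FB]$ to $[FV, FE]$, which by \Cref{cor:stable-left-rep} is exactly a left-restricted lifting structure of $FU \to FV$ against $FE \to FB$ restricted along $FV \to FV'$. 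The dual right-restricted statement is obtained by the same argument, applying \Cref{cor:stable-right-rep} in place of \Cref{cor:stable-left-rep}.

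The main obstacle I anticipate is the bookkeeping around markings in \Cref{def:marked-cat}: since 1-cells in $\Cat_\Msm$ must send marked squares to pullbacks, one needs to verify that the markings chosen on $\bI$, $\bJ$, and the map $\bI \to \abs{\bC}$ in \Cref{constr:free-lift-ext} are such that the only genuinely new datum in lifting $\bI \to \abs{\bD}$ to $\bJ \to \abs{\bD}$ is the diagonal of the triangle, with no additional pullback conditions spuriously imposed. Once this is checked---essentially because $\bJ$ is freely generated from $\bI$ by adjoining a single unmarked arrow---the chain of equivalences above yields the desired bijection on isomorphism classes.
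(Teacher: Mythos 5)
Your proposal is correct and follows essentially the same route as the paper's proof: invoke the universal property of the bipushout to reduce to a choice of morphism $F([U,E]\times_{[U,B]}[V',B]) \to F([V,E])$ over $F([U,E]\times_{[U,B]}[V,B])$, use the fact that $F$ preserves finite limits and pushforwards along representable maps (so it carries the internal-Hom cospan to the corresponding cospan built from $FU, FV, FV', FE, FB$), and conclude by \Cref{cor:stable-left-rep} (resp.\ \Cref{cor:stable-right-rep}). Your extra care about the biadjunction $\vbr{-} \dashv \abs{-}$ and the markings on $\bI$ and $\bJ$ is a legitimate refinement of the step the paper compresses into ``by the universal property of the bipushout,'' not a different argument.
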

\begin{proof}
  We show the case for left restricted lifting structures.
  By the universal property of the bipushout, isomorphism classes of maps
  \begin{equation*}
    \bC
    \cup_{\set{\textcolor{red0}{\bullet \to {}} \textcolor{darkblue0}{\bullet} \textcolor{yellow0}{{} \leftarrow \bullet}}}
    \left\{
      {\scriptsize\begin{tikzcd}[column sep=small, row sep=small]
          \textcolor{red0}{[U,E] \times_{[U,B]} [V',B]} \ar[d, color=red0]
          \ar[r]
          &
          \textcolor{yellow0}{[V,E]} \ar[ld, color=yellow0]
          \\
          \textcolor{darkblue0}{[U,E] \times_{[U,B]} [V,B]}
        \end{tikzcd}}
    \right\}
    \to \bD \in \CwR
  \end{equation*}
  correspond uniquely to isomorphism classes of maps $F \colon \bC \to \bD$
  equipped with a choice of a map
  $F(\textcolor{red0}{[U,E] \times_{[U,B]} [V',B]}) \to
  F(\textcolor{yellow0}{[V,E]})$ over
  $F(\textcolor{darkblue0}{[U,E] \times_{[U,B]} [V,B]})$.
  But $F$ is left exact and preserves all pushforwards along representable maps,
  so $F$ sends the cospan
  \begin{equation*}
    \textcolor{red0}{[U,E] \times_{[U,B]} [V',B] \to {}}
    \textcolor{darkblue0}{[U,E] \times_{[U,B]} [V,B]}
    \textcolor{yellow0}{\leftarrow [V,E]} \in \bC
  \end{equation*}
  to
  \begin{equation*}
    \textcolor{red0}{[FU,FE] \times_{[FU,FB]} [FV',FB] \to {}}
    \textcolor{darkblue0}{[FU,FE] \times_{[FU,FB]} [FV,FB]}
    \textcolor{yellow0}{\leftarrow [FV,FE]} \in \bD
  \end{equation*}
  Because $F \colon \bC \to \bD$ is a map of CwRs and $U \to V \to V'$ are maps
  between representable objects the map $FU \to FV \to FV'$ is again a map
  between representable, and thus exponentiable, objects.
  Thus, the lifting structures of $FU \to FV$ against $FE \to FB$ restricted
  along $FV \to FV'$ are exactly maps
  $\textcolor{red0}{[FU,FE] \times_{[FU,FB]} [FV',FB]} \to
  \textcolor{yellow0}{[FV,FE]}$ over
  $\textcolor{darkblue0}{[FU,FE] \times_{[FU,FB]} [FV,FB]}$ by
  \Cref{cor:stable-left-rep}.
\end{proof}


\section{Restricting and Composing Structured Lifts}\label{subsec:struct-lift-restrict}
We now give various ways of restricting lifting structures to construct new
lifting structures from old.
We first fix the left map and put on various restrictions on the right map in
\Cref{constr:struct-lift-right-res,constr:struct-lift-right-pb}.
We also show that structured lift against composable pairs of right maps induce
structured lifts against the composite in \Cref{constr:struct-lift-right-comp}.
Then, we fix the right map and put on various restrictions on the left map in
\Cref{constr:struct-lift-left-res,constr:struct-lift-left-retract}.

First, we show that restricting an existing structured lift on the right gives a
restricted structured lift.
\begin{construction}\label{constr:struct-lift-right-res}
  Fix maps $U \to V$ and $E \to B$ and $q \colon B' \to B$.
  We define a map as on the left
  \begin{center}
    \begin{minipage}{0.45\linewidth}
      \begin{equation*}
        \left(
          \begin{tikzcd}[cramped, row sep=small, column sep=small]
            U \ar[d] \\ V
          \end{tikzcd}
          \squareslash
          \begin{tikzcd}[cramped, row sep=small, column sep=small]
            E \ar[d] \\ B
          \end{tikzcd}
        \right)
        \xrightarrow{\qquad}
        \left(
          \begin{tikzcd}[cramped, row sep=small, column sep=small]
            U \ar[d] \\ V
          \end{tikzcd}
          \squareslash
          \begin{tikzcd}[cramped, row sep=small, column sep=small]
            & E \ar[d] \\ B' \ar[r, "q"'] & B
          \end{tikzcd}
        \right)
      \end{equation*}
    \end{minipage}
    \begin{minipage}{0.45\linewidth}
      \begin{equation*}
        \begin{tikzcd}[cramped]
          {X \times U} && E \\
          {X \times V} & {B'} & {B}
          \arrow["u", dashed, from=1-1, to=1-3, color=red0]
          \arrow[from=1-1, to=2-1, color=darkblue0]
          \arrow[from=1-3, to=2-3, color=darkblue0]
          \arrow["{F(u, qv)}"{description}, dotted, from=2-1, to=1-3, color=yellow0]
          \arrow["{v}"', dashed, from=2-1, to=2-2, color=red0]
          \arrow["{q}"', from=2-2, to=2-3, color=red0]
        \end{tikzcd}
      \end{equation*}
    \end{minipage}
  \end{center}
  where each structured lift $F$ of $U \to V$ against $E \to B$ is taken to the
  structured lift that solves each lifting problem $(u, v)$ with the solution
  $F(u, qv)$.
\end{construction}

We next note that the restriction on the right can be removed simply by pulling
back the right map.
\begin{construction}\label{constr:struct-lift-right-pb}
  Suppose one has a map $U \to V$ along with a pullback $E' \to B'$ of $E \to B$
  along $q \colon B' \to B$.
  \begin{equation*}
    \begin{tikzcd}[cramped]
      E' \ar[d] \ar[r, "q'"] \ar[rd, draw=none, "{\lrcorner}"{pos=0.15,scale=1.5}]
      & E \ar[d]
      \\ B' \ar[r, "q"']
      & B
    \end{tikzcd}
  \end{equation*}
  Then, we define a bijection as follows
  \begin{equation*}
    \left(
      \begin{tikzcd}[cramped, row sep=small, column sep=small]
        U \ar[d] \\ V
      \end{tikzcd}
      \squareslash
      \begin{tikzcd}[cramped, row sep=small, column sep=small]
        & B \ar[d] \\ B' \ar[r, "q"'] & B
      \end{tikzcd}
    \right)
    \xrightarrow{\quad\cong\quad}
    \left(
      \begin{tikzcd}[cramped, row sep=small, column sep=small]
        U \ar[d] \\ V
      \end{tikzcd}
      \squareslash
      \begin{tikzcd}[cramped, row sep=small, column sep=small]
        E' \ar[d] \\ B'
      \end{tikzcd}
    \right)
  \end{equation*}
  It sends each restricted $F$ of $U \to V$ against $E \to B$ restricted along
  $B' \to B$ to the lifting structure which solves lifting problems $(u,v)$ with
  the solution $(v, F(q'u, qv))$.
  \begin{equation*}
    \begin{tikzcd}[cramped]
      {X \times U} && {E'} && E \\
      \\
      {X \times V} && {B'} && {B}
      \arrow["u", dashed, from=1-1, to=1-3, color=red0]
      \arrow["q'", from=1-3, to=1-5]
      \arrow["\lrcorner"{anchor=center, pos=0.05,scale=1.5}, draw=none, from=1-3, to=3-5]
      \arrow[from=1-1, to=3-1, color=darkblue0]
      \arrow[from=1-3, to=3-3, color=darkblue0]
      \arrow[from=1-5, to=3-5, color=darkblue0]
      \arrow["{v}"', dashed, from=3-1, to=3-3, color=red0]
      \arrow["{q}"', from=3-3, to=3-5]
      \arrow["{(v, F(q'u, qv))}"{description}, dotted, from=3-1, to=1-3, color=yellow0]
      \arrow["{F(q'u, qv)}"{description, pos=0.7}, curve={height=12pt}, dotted, from=3-1, to=1-5, crossing over, color=yellow0]
    \end{tikzcd}
  \end{equation*}
\end{construction}

In the next construction, we show how structured lifts against composable pairs
of right maps induce structured lifts against their composite.
\begin{construction}\label{constr:struct-lift-right-comp}
  Fix maps $U \to V$ and a composable pair of maps
  $E' \xrightarrow{p'} E \xrightarrow{p} B$.
  Then, we define a map
  \begin{equation*}
    \left(
      \begin{tikzcd}[cramped, row sep=small, column sep=small]
        U \ar[d] \\ V
      \end{tikzcd}
      \squareslash
      \begin{tikzcd}[cramped, row sep=small, column sep=small]
        E' \ar[d, "{p'}"] \\ E
      \end{tikzcd}
    \right)
    \times
    \left(
      \begin{tikzcd}[cramped, row sep=small, column sep=small]
        U \ar[d] \\ V
      \end{tikzcd}
      \squareslash
      \begin{tikzcd}[cramped, row sep=small, column sep=small]
        E \ar[d, "{p}"] \\ B
      \end{tikzcd}
    \right)
    \xrightarrow{\qquad}
    \left(
      \begin{tikzcd}[cramped, row sep=small, column sep=small]
        U \ar[d] \\ V
      \end{tikzcd}
      \squareslash
      \begin{tikzcd}[cramped, row sep=small, column sep=small]
        E' \ar[d, "{p'}"] \\ E \ar[d, "{p}"] \\ B
      \end{tikzcd}
    \right)
  \end{equation*}
  that sends a pair $F' \in (U \to V) \squareslash (E' \to E)$ and
  $F \in (U \to V) \squareslash (E \to B)$ to the lifting structure which solves
  lifting problems $(u,v)$ with the consecutive lift $F'(u, F(p'u, v))$.
  \begin{equation*}
    \begin{tikzcd}[cramped]
      {X \times U} && E' \\
      && {E} \\
      {X \times V} && {B}
      \arrow["u", dashed, from=1-1, to=1-3, color=red0]
      \arrow[from=1-1, to=3-1, color=darkblue0]
      \arrow["p'", from=1-3, to=2-3 , color=darkblue0]
      \arrow["{p}", from=2-3, to=3-3, color=darkblue0]
      \arrow["{F'(u, F(p'u,v))}"{description}, dotted, from=3-1, to=1-3, color=yellow0]
      \arrow["{F(p'u,v)}"{description}, dotted, from=3-1, to=2-3, color=yellow0]
      \arrow["{v}"', dashed, from=3-1, to=3-3, color=red0]
    \end{tikzcd}
  \end{equation*}
\end{construction}

We now fix the right map and put on various restrictions on the left map.
First, we show the left analogue of \Cref{constr:struct-lift-right-res}.
\begin{construction}\label{constr:struct-lift-left-res}
  Fix maps $U \to V$ and $i \colon V \to V'$ and $E \to B$.
  We construct a map as on the left
  \begin{center}
    \begin{minipage}{0.45\linewidth}
      \begin{equation*}
        \left(
          \begin{tikzcd}[cramped, row sep=small, column sep=small]
            U \ar[d] \\ V
          \end{tikzcd}
          \squareslash
          \begin{tikzcd}[cramped, row sep=small, column sep=small]
            E \ar[d] \\ B
          \end{tikzcd}
        \right)
        \xrightarrow{\qquad}
        \left(
          \begin{tikzcd}[cramped, row sep=small, column sep=small]
            U \ar[d] \\ V \ar[r, "i"'] & V'
          \end{tikzcd}
          \squareslash
          \begin{tikzcd}[cramped, row sep=small, column sep=small]
            E \ar[d] \\ B
          \end{tikzcd}
        \right)
      \end{equation*}
    \end{minipage}
    \begin{minipage}{0.45\linewidth}
      \begin{equation*}
        \begin{tikzcd}[cramped]
          {X \times U} && B \\
          {X \times V} & {X \times V'} & {\underline{B}}
          \arrow["u", color=red0, dashed, from=1-1, to=1-3]
          \arrow[from=1-1, to=2-1, color=darkblue0]
          \arrow[from=1-3, to=2-3, color=darkblue0]
          \arrow["{F(u, v' \cdot (X \times i))}"{description}, color=yellow0, dotted, from=2-1, to=1-3]
          \arrow["{X \times i}"', from=2-1, to=2-2, color=red0]
          \arrow["{v'}"', color=red0, dashed, from=2-2, to=2-3]
        \end{tikzcd}
      \end{equation*}
    \end{minipage}
  \end{center}
  where the image of $F \in (U \to V) \squareslash (E \to B)$ solves each
  lifting problem $(u,v')$ with the solution $F(u, v' \cdot (X \times i))$.
\end{construction}

We also have a left analogue of \Cref{constr:struct-lift-right-comp}.
\begin{construction}\label{constr:struct-lift-left-comp}
  Fix a composable pair of maps $U' \xrightarrow{i'} U \xrightarrow{i} V$ and a
  map $E \to B$.
  Then, we define a map
  \begin{equation*}
    \left(
      \begin{tikzcd}[cramped, row sep=small, column sep=small]
        U' \ar[d, "{i'}"'] \\ U
      \end{tikzcd}
      \squareslash
      \begin{tikzcd}[cramped, row sep=small, column sep=small]
        E \ar[d] \\ B
      \end{tikzcd}
    \right)
    \times
    \left(
      \begin{tikzcd}[cramped, row sep=small, column sep=small]
        U \ar[d, "i"'] \\ V
      \end{tikzcd}
      \squareslash
      \begin{tikzcd}[cramped, row sep=small, column sep=small]
        E \ar[d, ""] \\ B
      \end{tikzcd}
    \right)
    \xrightarrow{\qquad}
    \left(
      \begin{tikzcd}[cramped, row sep=small, column sep=small]
        U' \ar[d, "{i'}"'] \\ U \ar[d, "{i}"'] \\ V
      \end{tikzcd}
      \squareslash
      \begin{tikzcd}[cramped, row sep=small, column sep=small]
        E \ar[d] \\ B
      \end{tikzcd}
    \right)
  \end{equation*}
  that sends a pair $F' \in (U' \to U) \squareslash (E \to B)$ and
  $F \in (U \to V) \squareslash (E \to B)$ to the lifting structure which solves
  lifting problems $(u',v)$ with the consecutive lift
  $F(F'(u', v \cdot (X \times i)), v)$.
  \begin{equation*}
    \begin{tikzcd}[cramped]
      {X \times U'} &&& E \\
      {X \times U} \\
      {X \times V} &&& B
      \arrow["{u'}", color=red0, dashed, from=1-1, to=1-4]
      \arrow["{X \times i'}"', color=darkblue0, from=1-1, to=2-1]
      \arrow["p", color=darkblue0, from=1-4, to=3-4]
      \arrow["{F'(u', v \cdot  (X \times i))}"{description}, color=yellow0, dotted, from=2-1, to=1-4]
      \arrow["{X \times i}"', color=darkblue0, from=2-1, to=3-1]
      \arrow["{F(F'(u', v \cdot  (X \times i)), v)}"{description, pos=0.3}, color=yellow0, dotted, from=3-1, to=1-4]
      \arrow["v"', color=red0, dashed, from=3-1, to=3-4]
    \end{tikzcd}
  \end{equation*}
\end{construction}

We next show the structured version of the fact that the left maps to lifting
problems are closed under retracts.
\begin{construction}\label{constr:struct-lift-left-retract}
  Suppose one has a retract of composable pairs of maps.
  \begin{equation*}
    \begin{tikzcd}[cramped]
      {U_0} & U & {U_0} \\
      {V_0} & V & {V_0} \\
      & {V_0'} & {V'} & {V_0'}
      \arrow["m", from=1-1, to=1-2]
      \arrow[from=1-1, to=2-1]
      \arrow["n", from=1-2, to=1-3]
      \arrow[from=1-2, to=2-2]
      \arrow[from=1-3, to=2-3]
      \arrow["s"{description}, from=2-1, to=2-2]
      \arrow[from=2-1, to=3-2]
      \arrow["r"{description}, from=2-2, to=2-3]
      \arrow[from=2-2, to=3-3]
      \arrow[from=2-3, to=3-4]
      \arrow["{s'}"', from=3-2, to=3-3]
      \arrow["{r'}"', from=3-3, to=3-4]
    \end{tikzcd}
  \end{equation*}
  Then, we define a map
  \begin{equation*}
    \left(
      \begin{tikzcd}[cramped, row sep=small, column sep=small]
        U \ar[d] \\ V \ar[r] & V'
      \end{tikzcd}
      \squareslash
      \begin{tikzcd}[cramped, row sep=small, column sep=small]
        E \ar[d] \\ B
      \end{tikzcd}
    \right)
    \xrightarrow{\qquad}
    \left(
      \begin{tikzcd}[cramped, row sep=small, column sep=small]
        U_0 \ar[d] \\ V_0 \ar[r] & V_0'
      \end{tikzcd}
      \squareslash
      \begin{tikzcd}[cramped, row sep=small, column sep=small]
        E \ar[d] \\ B
      \end{tikzcd}
    \right)
  \end{equation*}
  sending each lifting structure $F$ from the left to the lifting structure that
  solves each lifting problem $(u_0,v_0)$ as below with the solution
  $F(u_0 \cdot (X \times n), v_0 \cdot (X \times r')) \cdot (X \times s)$.
  \begin{equation*}
    \begin{tikzcd}[cramped]
      {X \times U_0} & {X \times U} & {X \times U_0} \\
      \\
      {X \times V_0} & {X \times V} & {X \times V_0} && B \\
      & {X \times V_0'} & {X \times V'} & {X \times V_0'} \\
      &&&& {\underline{B}}
      \arrow["{X \times m}", from=1-1, to=1-2]
      \arrow[from=1-1, to=3-1]
      \arrow["{X \times n}", from=1-2, to=1-3]
      \arrow[from=1-2, to=3-2]
      \arrow[from=1-3, to=3-3]
      \arrow["{X \times s}"{description}, from=3-1, to=3-2]
      \arrow[from=3-1, to=4-2]
      \arrow["{X \times r}"{description}, from=3-2, to=3-3]
      \arrow[from=3-2, to=4-3]
      \arrow[from=3-3, to=4-4]
      \arrow[from=3-5, to=5-5]
      \arrow["{X \times s'}"', from=4-2, to=4-3]
      \arrow["{X \times r'}"', from=4-3, to=4-4]
      \arrow["{v_0}"', dashed, from=4-4, to=5-5, color=red0]
      \arrow[crossing over, curve={height=-36pt}, dotted, from=3-1, to=3-5, color=yellow0]
      \arrow[crossing over,
      "{F(u_0 \cdot (X \times n), v_0 \cdot (X \times r'))}"{description, pos=0.6},
      curve={height=16pt}, dotted, from=3-2, to=3-5, color=yellow0]
      \arrow["u_0", dashed, from=1-3, to=3-5, color=red0]
    \end{tikzcd}
  \end{equation*}
\end{construction}

By chaining \Cref{constr:struct-lift-right-res,constr:struct-lift-right-pb}, we
can observe that if $U \to V$ lifts uniformly on the left against $E \to B$ then
$U \to V$ also lifts uniformly on the left against any pullback $E' \to B'$ of
$E \to B$.
Dually, one may wonder if one can induce uniform lifting structures on pullbacks
of the left map $U \to V$.

First, recall that an exponentiable map $p \colon C \to D$ is a map where the
pullback functor $p^* \colon \sfrac{\bC}{D} \to \sfrac{\bC}{C}$ admits both the
left and right adjoint, respectively given by postcomposition and pushforwards.
\begin{equation*}
  \begin{tikzcd}[cramped]
    {\sfrac{\bC}{D}} & {\sfrac{\bC}{C}}
    \arrow[""{name=0, anchor=center, inner sep=0}, "{p^*}"{description}, from=1-1, to=1-2]
    \arrow[""{name=1, anchor=center, inner sep=0}, "{p_!}"', curve={height=18pt}, from=1-2, to=1-1]
    \arrow[""{name=2, anchor=center, inner sep=0}, "{p_*}"{description}, curve={height=-18pt}, from=1-2, to=1-1]
    \arrow["\dashv"{anchor=center, rotate=-91}, draw=none, from=0, to=2]
    \arrow["\dashv"{anchor=center, rotate=-89}, draw=none, from=1, to=0]
  \end{tikzcd}
\end{equation*}
We also recall the result from model category that if fibrations are stable
under pushforwards along fibrations then trivial cofibrations are stable under
pullback along fibrations.
By reproducing this result in a structured setting, we provide a construction
that induces uniform lifting structures on pullbacks of the left map.
\begin{construction}\label{constr:struct-tc-pullback-stable}
  Fix a generic left map $i \colon U \to V$ and a generic right map $E \to B$
  along with an exponentiable map $t \colon W \to V$ and a pullback
  $t^*i \colon t^*U \to W$ of $U \to V$ along $t$.
  \begin{equation*}
    \begin{tikzcd}[cramped]
      {t^*U} & U \\
      W & V
      \arrow[from=1-1, to=1-2]
      \arrow["{t^*i}"', from=1-1, to=2-1]
      \arrow["\lrcorner"{anchor=center, pos=0.15, scale=1.5}, draw=none, from=1-1, to=2-2]
      \arrow["i", from=1-2, to=2-2]
      \arrow["t"', from=2-1, to=2-2]
    \end{tikzcd}
  \end{equation*}

  We show that the pullback $t^*U \to W$ lifts uniformly against a generic right map
  $E \to B$ when the generic left map $U \to V$ lifts uniformly against the map
  $t_*(W \times E) \to t_*(W \times B)$ obtained from generic right map
  $E \to B \in \sfrac{\bC}{1}$ by applying the polynomial functor of
  $1 \leftarrow W \to V \to 1$.
  That is, we construct a map
  \begin{equation*}
    \left(
      \begin{tikzcd}[cramped, row sep=small] U \ar[d] \\ V \end{tikzcd}
      \squareslash
      \begin{tikzcd}[cramped, row sep=small] t_*(W \times E) \ar[d] \\ t_*(W \times B) \end{tikzcd}
    \right)
    \xrightarrow{\qquad}
    \left(
      \begin{tikzcd}[cramped, row sep=small] t^*U \ar[d] \\ V' \end{tikzcd}
      \squareslash
      \begin{tikzcd}[cramped, row sep=small] E \ar[d] \\ B \end{tikzcd}
    \right)
  \end{equation*}

  Let $F$ be a lifting structure of $U \to V$ against
  $t_*(W \times E) \to t_*(W \times B)$.
  Now assume that we are given a lifting problem $\textcolor{red0}{(u,v)}$ of
  $X \times t^*U \to X \times W$ against $E \to B$ as in the curved back face.
  \begin{equation*}
    \begin{tikzcd}[cramped]
      {X \times t^*U} &&&& {W \times E} && E \\
      & {X \times U} &&&& {t_*(W \times E)} \\
      {X \times W} &&&& {W \times B} && B \\
      &&&& W \\
      & {X \times V} &&&& {t_*(W \times B)} \\
      &&&&& V
      \arrow["{(t^*i \cdot \proj_2, u)}"{description, pos=0.7}, color=green0, dashed, from=1-1, to=1-5]
      \arrow["u"{description}, color=red0, curve={height=-12pt}, dashed, from=1-1, to=1-7]
      \arrow[from=1-1, to=2-2]
      \arrow[from=1-1, to=3-1]
      \arrow[from=1-5, to=1-7]
      \arrow[from=1-5, to=3-5]
      \arrow[from=1-7, to=3-7]
      \arrow[from=2-2, to=5-2]
      \arrow[from=2-6, to=5-6]
      \arrow["{F((t^*i \cdot \proj_2, u)^\dagger, (i \cdot \proj_2, u)^\dagger)^\dagger}"{description, pos=0.7}, color=yellow0, dotted, from=3-1, to=1-5]
      \arrow["{(\proj_2, v)}"{description, pos=0.7}, color=green0, dashed, from=3-1, to=3-5]
      \arrow["v"{description}, color=red0, curve={height=-12pt}, dashed, from=3-1, to=3-7]
      \arrow[from=3-1, to=4-5]
      \arrow[from=3-1, to=5-2]
      \arrow[from=3-5, to=3-7]
      \arrow[from=3-5, to=4-5]
      \arrow["t"{description, pos=0.3}, from=4-5, to=6-6]
      \arrow[from=5-2, to=6-6]
      \arrow[from=5-6, to=6-6]
      \arrow["{F((t^*i \cdot \proj_2, u)^\dagger, (i \cdot \proj_2, u)^\dagger)}"{description, pos=0.25}, color=darkblue0, dotted, from=5-2, to=2-6]
      \arrow[crossing over, "{(t^*i \cdot \proj_2, u)^\dagger}"{description}, color=magenta0, dashed, from=2-2, to=2-6]
      \arrow[crossing over, "{(\proj_2, v)^\dagger}"{description}, color=magenta0, dashed, from=5-2, to=5-6]
    \end{tikzcd}
  \end{equation*}
  Then, we produce the required lifting solution by the following procedure:
  \begin{enumerate}
    \item Induce a lifting problem given by
    $\textcolor{green0}{((t^*i \cdot \proj_2, u), (\proj_2, v))}$ of
    $X \times t^*U \to X \times W$ against $W \times E \to W \times B$.
    \item Because $X \times -$ preserves pullbacks, the pullback of
    $X \times U \to X \times V \in \sfrac{\bC}{V}$ under $t \colon W \to V$ is
    $X \times t^*U \to X \times W \in \sfrac{\bC}{W}$.
    So we can transpose
    $\textcolor{green0}{((t^*i \cdot \proj_2, u), (\proj_2, v))}$ to get a lifting
    problem
    $\textcolor{magenta0}{((t^*i \cdot \proj_2, u)^\dagger, (\proj_2, v)^\dagger)}$
    to get a solution.
    \item By the original solution $F$, we obtain a solution
    $\textcolor{darkblue0}{F((t^*i \cdot \proj_2, u)^\dagger, (\proj_2, v)^\dagger)}$.
    \item Transposing it back we obtain the required solution
    $\textcolor{yellow0}{F((t^*i \cdot \proj_2, u)^\dagger, (\proj_2,
      v)^\dagger)^\dagger}$.
  \end{enumerate}
  Because the transpose operation is natural, the uniformity structure of $F$
  shows that this indeed induces a structured lift.
\end{construction}

As a consequence, we obtain the following by combining various constructions
above.
\begin{theorem}\label{thm:struct-tc-pullback-stable}
  Fix a pullback $t^*U \to W \in \sfrac{\bC}{C}$ of a map
  $U \to V \in \sfrac{\bC}{C}$ along a map
  $t \colon W \to V \in \sfrac{\bC}{C}$.

  \begin{center}
    \begin{minipage}{0.45\linewidth}
      \begin{equation*}
        \begin{tikzcd}[cramped]
          {t^*U} & U \\
          W & V
          \arrow[from=1-1, to=1-2]
          \arrow[""', from=1-1, to=2-1]
          \arrow["\lrcorner"{anchor=center, pos=0.15, scale=1.5}, draw=none, from=1-1, to=2-2]
          \arrow["", from=1-2, to=2-2]
          \arrow["t"', from=2-1, to=2-2]
        \end{tikzcd}
      \end{equation*}
    \end{minipage}
    \begin{minipage}{0.45\linewidth}
      \begin{equation*}
        \begin{tikzcd}[cramped]
          t_*(W \times_C E) \ar[d] \ar[r, ""] \ar[rd, draw=none, "{\lrcorner}"{pos=0.15,scale=1.5}]
          & E \ar[d]
          \\ t_*(W \times_C B) \ar[r, ""]
          & B
        \end{tikzcd}
      \end{equation*}
    \end{minipage}
  \end{center}

  If $U \to V \in \bC$ lifts uniformly against a map $E \to B \in \bC$ and the
  pushed forwards map $t_*(W \times E) \to t_*(W \times B)$ occurs as a pullback
  of $E \to B$ then $t^*U \to W \in \bC$ lifts uniformly against
  $E \to B \in \bC$ as well.
\end{theorem}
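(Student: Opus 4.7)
The plan is to build the required lifting structure by composing three of the preceding constructions in sequence. Starting from a uniform lifting structure $F$ of $U \to V$ against $E \to B$, the intermediate objects will be, in order, a right-restricted version along the map $q \colon t_*(W \times_C B) \to B$ furnished by the hypothesis, and then an unrestricted version against $t_*(W \times_C E) \to t_*(W \times_C B)$. From there \Cref{constr:struct-tc-pullback-stable} delivers the conclusion in a single step.

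Concretely, I would first apply \Cref{constr:struct-lift-right-res} to the map $q$, turning $F$ into a uniform lifting structure of $U \to V$ against $E \to B$ right-restricted along $q$. Next, because the hypothesis says that $t_*(W \times_C E) \to t_*(W \times_C B)$ occurs as the pullback of $E \to B$ along $q$, I would invoke \Cref{constr:struct-lift-right-pb}, whose bijection converts this right-restricted structure into an honest uniform lifting structure of $U \to V$ against $t_*(W \times_C E) \to t_*(W \times_C B)$. Finally, feeding this into \Cref{constr:struct-tc-pullback-stable} immediately produces the desired uniform lifting structure of $t^*U \to W$ against $E \to B$.

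The main obstacle, as I see it, is slice-theoretic bookkeeping rather than any genuine new idea: \Cref{constr:struct-tc-pullback-stable} was stated in the absolute setting over the terminal object, whereas the present statement lives over $C$. I would need to check that the transposition step inside that construction, which uses that $X \times -$ preserves pullbacks, still works when $X \times -$ is replaced by $X \times_C -$, and that exponentiability of $t$ in $\sfrac{\bC}{C}$ supplies the required pushforward $t_*$ used to form the right map. Both facts are standard properties of locally cartesian closed structure, after which the entire proof reduces to a composite of the three cited maps of lifting structures.
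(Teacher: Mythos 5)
Your proposal is correct and is exactly the paper's argument: the paper's proof is precisely the composite of \Cref{constr:struct-lift-right-res}, \Cref{constr:struct-lift-right-pb}, and \Cref{constr:struct-tc-pullback-stable}, applied in the same logical order you describe. Your worry about the slice-theoretic bookkeeping is reasonable given the statement's mixed notation, but the paper treats the genuinely sliced version separately (via \Cref{lem:pshfw-slice} and \Cref{constr:struct-tc-pullback-stable-slice}), so nothing further is needed here.
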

\begin{proof}
  By
  \Cref{constr:struct-tc-pullback-stable,constr:struct-lift-right-res,constr:struct-lift-right-pb}.
\end{proof}


\section{Rebasing Lifting Structures}\label{subsec:struct-lift-rebase}
We now work with lifting structures in slice categories and show that they are
stable under the pullback and post-composition change of base operations.

We begin with spelling out what it means to be a lifting structure in a slice
category.
\begin{definition}\label{def:local-struct-lift}
  Fix an object $C \in \bC$.
  Given maps $U \to V$ and $V \to V'$ and $E \to B$ in the slice
  $\sfrac{\bC}{C}$, the set of structured lifts in $\sfrac{\bC}{C}$ of $U \to V$
  restricted on the left along $V \to V'$ against $E \to B$ is denoted as on the
  left below.
  \begin{center}
    \begin{minipage}{0.45\linewidth}
      \begin{equation*}
        {\begin{tikzcd}[cramped, column sep=small, row sep=small]
            U \ar[d] \\ V \ar[r] & V'
          \end{tikzcd}}
        \fracsquareslash{C}
        {\begin{tikzcd}[cramped, column sep=small, row sep=small]
            E \ar[d] \\ B
          \end{tikzcd}}
      \end{equation*}
    \end{minipage}
    \begin{minipage}{0.45\linewidth}
      \begin{equation*}
        {\begin{tikzcd}[cramped, row sep=small, column sep=small]
            U \ar[d] \\ V
          \end{tikzcd}}
        \fracsquareslash{C}
        {\begin{tikzcd}[cramped, row sep=small, column sep=small]
            & E \ar[d] \\ B' \ar[r] & B
          \end{tikzcd}}
      \end{equation*}
    \end{minipage}
  \end{center}
  Dually, given $U \to V$ and $B' \to B$ and $E \to B$ in the slice
  $\sfrac{\bC}{C}$, the set of structured lifts in $\sfrac{\bC}{C}$ of $U \to V$
  restricted on the right along $B' \to B$ against $E \to B$ is denoted as on the
  right above.
\end{definition}

Because lifting structures are inherently a limiting, or a ``right-sided
concept'', it is unsurprising they are stable under pullback.

\begin{construction}\label{constr:struct-lift-pb}
  Fix a map $\phi \colon D \to C$ along with maps
  $U \to V \to V' \in \sfrac{\bC}{C}$ and $E \to B \in \sfrac{\bC}{C}$ where
  $U,V,V'$ are exponential objects in the slice over $C$ so that by the
  Beck-Chevalley condition, $\phi^*U,\phi^*V,\phi^*V'$ are exponential objects
  in the slice over $D$.
  We define a map
  \begin{equation*}
    \left({\begin{tikzcd}[cramped, column sep=small, row sep=small]
          U \ar[d] \\ V \ar[r] & V'
        \end{tikzcd}}
      \fracsquareslash{C}
      {\begin{tikzcd}[cramped, column sep=small, row sep=small]
          E \ar[d] \\ B
        \end{tikzcd}}\right)
    \xrightarrow{\qquad}
    \left({\begin{tikzcd}[cramped, column sep=small, row sep=small]
          \phi^*U \ar[d] \\ \phi^*V \ar[r] & \phi^*V'
        \end{tikzcd}}
      \fracsquareslash{D}
      {\begin{tikzcd}[cramped, column sep=small, row sep=small]
          \phi^*E \ar[d] \\ \phi^*B
        \end{tikzcd}}\right)
  \end{equation*}
  by nothing that the pullback functor
  $\phi^* \colon \sfrac{\bC}{C} \to \sfrac{\bC}{D}$ preserves limits and
  exponentials so, one has the pullbacks
  \begin{equation*}
    \begin{tikzcd}[cramped, row sep=small, column sep=small]
      {\textcolor{red0}{[\phi^*U,\phi^*E]_D \times_{[\phi^*U,\phi^*B]_D} [\phi^*V',\phi^*B]_D}}
      && {\textcolor{red0}{[U,E]_C \times_{[U,B]_C} [V',B]_C}}
      \\
      & {\textcolor{yellow0}{[\phi^*V,\phi^*E]_D}} && {\textcolor{yellow0}{[V,E]_C}} \\
      {\textcolor{darkblue0}{[\phi^*U,\phi^*E]_D \times_{[\phi^*U,\phi^*B]_D} [\phi^*V,\phi^*B]_D}}
      && {\textcolor{darkblue0}{[U,E]_C \times_{[U,B]_C} [V,B]_C}}
      \\
      D && C
      \arrow["\lrcorner"{anchor=center, pos=0.05, scale=1.5, rotate=0}, draw=none, from=1-1, to=3-3]
      \arrow["\lrcorner"{anchor=center, pos=0.05, scale=1.5, rotate=0}, draw=none, from=2-2, to=3-3]
      \arrow["\lrcorner"{anchor=center, pos=0.05, scale=1.5, rotate=0}, draw=none, from=3-1, to=4-3]
      \arrow[from=3-1, to=4-1]
      \arrow[from=3-3, to=4-3]
      \arrow["\phi"', from=4-1, to=4-3]
      \arrow[color={red0}, from=1-3, to=3-3]
      \arrow[color={red0}, from=1-1, to=3-1]
      \arrow[color={yellow0}, from=2-2, to=3-1]
      \arrow[color={yellow0}, from=2-4, to=3-3]
      \arrow[from=3-1, to=3-3]
      \arrow[from=1-1, to=1-3]
      \arrow[from=2-2, to=2-4, crossing over]
    \end{tikzcd}
  \end{equation*}
  The required map is then given by \Cref{lem:stable-left-rep}.

  Similarly, for maps $U \to V$ and $B' \to B \leftarrow E$ where $U,V$ are
  exponentiable objects in $\sfrac{\bC}{C}$, we can construct a map
  \begin{equation*}
    \left(\begin{tikzcd}[cramped, row sep=small, column sep=small]
        U \ar[d] \\ V
      \end{tikzcd}
      \fracsquareslash{C}
      \begin{tikzcd}[cramped, row sep=small, column sep=small]
        & E \ar[d] \\ B' \ar[r] & B
      \end{tikzcd}\right)
    \xrightarrow{\qquad}
    \left(\begin{tikzcd}[cramped, row sep=small, column sep=small]
        \phi^*U \ar[d] \\ \phi^*V
      \end{tikzcd}
      \fracsquareslash{D}
      \begin{tikzcd}[cramped, row sep=small, column sep=small]
        & \phi^*E \ar[d] \\ \phi^*B' \ar[r] &
        \phi^*B
      \end{tikzcd}\right)
  \end{equation*}
\end{construction}

We next proceed to show that the rebasing by post-composition also preserves
certain forms of lifting structures.
To do so, recall that the pushforward realises the external left adjoint to the
pullback functor as also the internal left adjoint, made precise in the
following sense.
\begin{lemma}\label{lem:pushforward-pb-left-adj}
  Fix an exponentiable map $p \colon C \to D$.
  Then,
  \begin{enumerate}
    \item \label{itm:pushforward-pb-left-adj-1}
    For any $X \to C \in \sfrac{\bC}{C}$ and $Z \to D \in \sfrac{\bC}{D}$,
    one has $p_!(X \times_C p^*Z) \cong p_!X \times_D Z$, where $p_!$ is the
    left adjoint to the pullback functor.
    \item \label{itm:pushforward-pb-left-adj-2} For any exponentiable
    $X \to C \in \sfrac{\bC}{C}$ and $Y \to D \in \sfrac{\bC}{D}$, one has
    $p_*[X, p^*Y]_C \cong [p_!X, Y]_D$, where $p_*$ is the pushforward
    (i.e. right adjoint the pullback) along $p$.
  \end{enumerate}
\end{lemma}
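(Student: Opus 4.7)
The plan is to prove part (1) by a direct pullback computation (it is a form of Frobenius reciprocity), and then deduce part (2) from part (1) by an elementary Yoneda argument that chains together the relevant adjunctions.

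For part (1), recall that $p_!$ is just post-composition with $p$, and that $p^*Z = Z \times_D C$ as an object of $\sfrac{\bC}{C}$ via the projection to $C$. I would compute $X \times_C p^*Z = X \times_C (Z \times_D C)$ using pullback pasting: the cospan $X \to C \leftarrow Z \times_D C$ factors through $D$, so by the standard two-pullback lemma the object $X \times_C (Z \times_D C)$ is isomorphic to $X \times_D Z$, with structure map to $C$ given by the first projection composed with the structure map $X \to C$. Post-composing with $p$ now gives the structure map $X \times_D Z \to C \to D$, which is exactly the structure map of $p_!X \times_D Z$, proving the claim.

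For part (2), I would verify the isomorphism by Yoneda in $\sfrac{\bC}{D}$. For any $T \to D$, one has the natural chain
\begin{align*}
  \sfrac{\bC}{D}(T, p_*[X, p^*Y]_C)
  &\cong \sfrac{\bC}{C}(p^*T, [X, p^*Y]_C)
  && (p^* \dashv p_*) \\
  &\cong \sfrac{\bC}{C}(p^*T \times_C X, p^*Y)
  && \text{(internal-Hom in } \sfrac{\bC}{C}\text{)} \\
  &\cong \sfrac{\bC}{D}(p_!(p^*T \times_C X), Y)
  && (p_! \dashv p^*) \\
  &\cong \sfrac{\bC}{D}(T \times_D p_!X, Y)
  && \text{(part (1))} \\
  &\cong \sfrac{\bC}{D}(T, [p_!X, Y]_D)
  && \text{(internal-Hom in } \sfrac{\bC}{D}\text{).}
\end{align*}
Each step is natural in $T$, so Yoneda yields the required isomorphism $p_*[X, p^*Y]_C \cong [p_!X, Y]_D$. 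Note that the internal-Hom $[p_!X, Y]_D$ exists because exponentiability is preserved: part (1) shows that $T \mapsto T \times_D p_!X$ is left adjoint to $Z \mapsto p_*[X, p^*Z]_C$ via the above chain, which witnesses exponentiability of $p_!X$.

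The only real subtlety is the bookkeeping in part (1): one must be careful that the identification $X \times_C p^*Z \cong X \times_D Z$ is genuinely over $C$ (so that applying $p_!$ gives the right structure map over $D$), and that the isomorphism in part (1) is natural in both $X$ and $Z$ so the Yoneda argument in part (2) goes through. No further constructions are needed, since the adjunctions and the existence of the internal-Homs are already assumed in the hypotheses.
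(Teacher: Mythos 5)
Your proof is correct and follows essentially the same route as the paper: part (1) by the two-pullback (pasting) lemma applied to $X \to C \leftarrow p^*Z$ over $D$, and part (2) by the same Yoneda chain through the adjunctions $p^* \dashv p_*$, the internal-Hom adjunctions in each slice, $p_! \dashv p^*$, and part (1). Your additional remark justifying the existence of $[p_!X, Y]_D$ is a small but welcome refinement over the paper's proof.
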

\begin{proof}
  \Cref{itm:pushforward-pb-left-adj-1} is directly by the pullback lemma:
  \begin{equation*}
    \begin{tikzcd}
      \bullet \\
      X & {p^*Z} & Z \\
      & C & D
      \arrow[from=1-1, to=2-1]
      \arrow[from=1-1, to=2-2]
      \arrow[from=2-1, to=3-2]
      \arrow[from=2-2, to=2-3]
      \arrow[from=2-2, to=3-2]
      \arrow["\lrcorner"{anchor=center, pos=0.05, scale=1.5}, draw=none, from=1-1, to=3-2]
      \arrow["\lrcorner"{anchor=center, pos=0.05, scale=1.5}, draw=none, from=2-2, to=3-3]
      \arrow[from=2-3, to=3-3]
      \arrow[from=3-2, to=3-3, "p"']
    \end{tikzcd}
  \end{equation*}

  \Cref{itm:pushforward-pb-left-adj-2} is now by
  \Cref{itm:pushforward-pb-left-adj-1} and representability, because for each
  $Z \to D \in \sfrac{\bC}{D}$, one has
  \begin{align*}
    \sfrac{\bC}{D}(Z, p_*[X, p^*Y]_C)
    &= \sfrac{\bC}{C}(p^*Z, [X, p^*Y]_C) \\
    &= \sfrac{\bC}{C}(p^*Z \times_C X, p^*Y) \\
    &= \sfrac{\bC}{D}(p_!(p^*Z \times_C X), Y) \\
    &= \sfrac{\bC}{D}(Z \times_D p_!X, Y) \\
    \sfrac{\bC}{D}(Z, p_*[X, p^*Y]_C)
    &= \sfrac{\bC}{D}(Z, [p_!X, Y]_D)
  \end{align*}
\end{proof}
\begin{construction}\label{constr:struct-lift-postcomp}
  Fix a map $p \colon C \to D$.
  Let there be maps $U \to V$ and $V \to V'$ in $\sfrac{\bC}{C}$ along with a
  map $E \to B \in \sfrac{\bC}{D}$, where $U,V,V'$ are exponentiable in the
  slice category over $C$.
  We construct a map
  \begin{equation*}
    \left({\begin{tikzcd}[cramped, column sep=small, row sep=small]
          U \ar[d] \\ V \ar[r] & V'
        \end{tikzcd}}
      \fracsquareslash{C}
      {\begin{tikzcd}[cramped, column sep=small, row sep=small]
          {p^*E} \ar[d] \\ {p^*B}
        \end{tikzcd}}\right)
    \xrightarrow{\qquad}
    \left({\begin{tikzcd}[cramped, column sep=small, row sep=small]
          p_!U \ar[d] \\ p_!V \ar[r] & p_!V'
        \end{tikzcd}}
      \fracsquareslash{D}
      {\begin{tikzcd}[cramped, column sep=small, row sep=small]
          E \ar[d] \\ B
        \end{tikzcd}}\right)
  \end{equation*}
  by \Cref{lem:pushforward-pb-left-adj}, which states that right adjoint
  $p_* \colon \sfrac{\bC}{C} \to \sfrac{\bC}{D}$ maps
  \begin{equation*}\small
    \begin{tikzcd}[row sep=small, column sep=small, cramped]
      {\textcolor{red0}{[U, p^*E]_C \times_{[U,p^*B]_C} [V', p^*B]_C}} \\
      & {\textcolor{yellow0}{[V,p^*E]_C}} \\
      {\textcolor{darkblue0}{[U, p^*E]_C \times_{[U, p^*B]_C} [V, p^*B]_C}}
      && {\textcolor{red0}{[p_!U, E]_D \times_{[p_!U, B]_D} [p_!V', B]_D}} \\
      C &&& {\textcolor{yellow0}{[p_!V,E]_D}} \\
      && {\textcolor{darkblue0}{[p_!U,E]_D \times_{[p_!U, B]_D} [p_!V, B]_D}} \\
      && D
      \arrow[from=3-1, to=4-1]
      \arrow["{p_*}", shorten <=39pt, shorten >=39pt, maps to, from=3-1, to=5-3]
      \arrow[draw={red0}, from=1-1, to=3-1]
      \arrow[color={red0}, from=3-3, to=5-3]
      \arrow[draw={yellow0}, from=2-2, to=3-1]
      \arrow[color={yellow0}, from=4-4, to=5-3]
      \arrow[from=5-3, to=6-3]
      \arrow["p"', from=4-1, to=6-3]
    \end{tikzcd}
  \end{equation*}

  For the same reason, if there are maps $U \to V$ in $\sfrac{\bC}{C}$ where
  $U,V$ are exponentiable in the slice over $C$ and maps $B' \to B$ and
  $E \to B$ in $\sfrac{\bC}{D}$ then one also has a map
  \begin{equation*}
    \left(\begin{tikzcd}[cramped, row sep=small, column sep=small]
        U \ar[d] \\ V
      \end{tikzcd}
      \fracsquareslash{C}
      \begin{tikzcd}[cramped, row sep=small, column sep=small]
        & p^*E \ar[d] \\ p^*B' \ar[r] & p^*B
      \end{tikzcd}\right)
    \xrightarrow{\qquad}
    \left(\begin{tikzcd}[cramped, row sep=small, column sep=small]
        p_!U \ar[d] \\ p_!V
      \end{tikzcd}
      \fracsquareslash{D}
      \begin{tikzcd}[cramped, row sep=small, column sep=small]
        & E \ar[d] \\ B' \ar[r] & B
      \end{tikzcd}\right)
  \end{equation*}
\end{construction}

We also show a local version of \Cref{constr:struct-tc-pullback-stable}.
Because \Cref{constr:struct-tc-pullback-stable} mentions about pushforwards, we
need to recall that pushforwards in slice categories are just computed as
pushforwards in the ambient category.
\begin{lemma}\label{lem:pshfw-slice}
  Fix $C \in \bC$ and a map $t \colon U \to V \in \sfrac{\bC}{C}$.
  Then the pushforwards along $t$ in $\sfrac{\bC}{C}$ is the pushforwards along
  $t$ in the ambient category $\bC$.
\end{lemma}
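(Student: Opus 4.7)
The plan is to reduce the statement to the canonical equivalence between iterated slicing and a single slice, combined with uniqueness of right adjoints.

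First, I would invoke the canonical equivalence $(\sfrac{\bC}{C})/U \simeq \sfrac{\bC}{U}$, which is valid for any $U \to C$ in $\bC$: an object of $(\sfrac{\bC}{C})/U$ is by definition a morphism $X \to U$ in $\sfrac{\bC}{C}$, which amounts to a map $X \to U$ in $\bC$ together with the equality of composites $X \to U \to C$ with a chosen structure map $X \to C$; the latter is redundant, so the data is just a map $X \to U$ in $\bC$. The analogous equivalence $(\sfrac{\bC}{C})/V \simeq \sfrac{\bC}{V}$ holds for $V$.

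Second, I would observe that under these equivalences, the pullback functor $t^* \colon (\sfrac{\bC}{C})/V \to (\sfrac{\bC}{C})/U$ corresponds to the pullback functor $t^* \colon \sfrac{\bC}{V} \to \sfrac{\bC}{U}$. This is because a pullback square in $\sfrac{\bC}{C}$ is exactly a pullback square in $\bC$ (with the structure maps to $C$ determined uniquely by the universal property), so the two pullback functors compute the same thing on objects and morphisms.

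Third, since right adjoints are unique up to canonical natural isomorphism, the right adjoint $t_*$ to the first pullback functor is canonically isomorphic to the right adjoint $t_*$ to the second. In particular, $t$ is exponentiable in $\sfrac{\bC}{C}$ if and only if it is exponentiable in $\bC$, and the two notions of pushforward agree under the equivalence.

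No substantive obstacle arises here: the entire content of the lemma is the transitivity of slicing together with the uniqueness of adjoints, and the proof is essentially bookkeeping. The only point worth being explicit about is that pullbacks in $\sfrac{\bC}{C}$ coincide with pullbacks in $\bC$, which is immediate from the universal property.
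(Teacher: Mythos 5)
Your proposal is correct and follows essentially the same route as the paper: the paper's entire proof is the one-line observation that pullbacks in $\sfrac{\bC}{C}$ are exactly pullbacks in $\bC$, and your argument simply spells out the bookkeeping around that fact (the iterated-slice equivalence and uniqueness of right adjoints). Nothing is missing.
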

\begin{proof}
  This is because pullbacks in the slice category are exactly pullbacks in the
  ambient category.
\end{proof}

Therefore, we have a local version of \Cref{constr:struct-tc-pullback-stable}.
\begin{construction}\label{constr:struct-tc-pullback-stable-slice}
  Fix an object $C \in \bC$ and a generic left map
  $i \colon U \to V \in \sfrac{\bC}{C}$ and a generic right map
  $E \to B \in \sfrac{\bC}{C}$, where $U,V$ are exponentiable in the slice,
  along with an exponentiable map $t \colon W \to V$ and a pullback
  $t^*U \to W \in \sfrac{\bC}{C}$ of $U \to V$ along $t$.

  Then, we construct a map
  \begin{equation*}
    \left(
      \begin{tikzcd}[cramped, row sep=small] U \ar[d] \\ V \end{tikzcd}
      \fracsquareslash{C}
      \begin{tikzcd}[cramped, row sep=small] t_*(W \times_C E) \ar[d] \\ t_*(W \times_C B) \end{tikzcd}
    \right)
    \xrightarrow{\qquad}
    \left(
      \begin{tikzcd}[cramped, row sep=small] t^*U \ar[d] \\ V' \end{tikzcd}
      \fracsquareslash{C}
      \begin{tikzcd}[cramped, row sep=small] E \ar[d] \\ B \end{tikzcd}
    \right)
  \end{equation*}
  by applying \Cref{constr:struct-tc-pullback-stable} and using
  \Cref{lem:pshfw-slice} to note that pushforwards in the slice $\sfrac{\bC}{C}$
  is just the pushforwards in $\bC$.
\end{construction}

As a consequence, we obtain the following local version of
\Cref{thm:struct-tc-pullback-stable}.
\begin{theorem}
  Fix an object $C \in \bC$ and a pullback $t^*U \to W \in \sfrac{\bC}{C}$ of a
  map $U \to V \in \sfrac{\bC}{C}$ between exponentiable objects along an
  exponentiable map $t \colon W \to V \in \sfrac{\bC}{C}$.

  \begin{center}
    \begin{minipage}{0.45\linewidth}
      \begin{equation*}
        \begin{tikzcd}[cramped]
          {t^*U} & U \\
          W & V
          \arrow[from=1-1, to=1-2]
          \arrow[""', from=1-1, to=2-1]
          \arrow["\lrcorner"{anchor=center, pos=0.15, scale=1.5}, draw=none, from=1-1, to=2-2]
          \arrow["", from=1-2, to=2-2]
          \arrow["t"', from=2-1, to=2-2]
        \end{tikzcd}
      \end{equation*}
    \end{minipage}
    \begin{minipage}{0.45\linewidth}
      \begin{equation*}
        \begin{tikzcd}[cramped]
          t_*(W \times_C E) \ar[d] \ar[r, ""] \ar[rd, draw=none, "{\lrcorner}"{pos=0.15,scale=1.5}]
          & E \ar[d]
          \\ t_*(W \times_C B) \ar[r, ""]
          & B
        \end{tikzcd}
      \end{equation*}
    \end{minipage}
  \end{center}

  If $U \to V \in \sfrac{\bC}{C}$ lifts uniformly against a map
  $E \to B \in \sfrac{\bC}{C}$ and the pushed forwards map
  $t_*(W \times_C E) \to t_*(W \times_C B)$ occurs as a pullback of $E \to B$
  then $t^*U \to W \in \sfrac{\bC}{C}$ lifts uniformly against
  $E \to B \in \sfrac{\bC}{C}$ as well.
\end{theorem}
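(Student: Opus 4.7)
The plan is to chain three constructions, all applied inside the slice category $\sfrac{\bC}{C}$, exactly mirroring the proof of \Cref{thm:struct-tc-pullback-stable} in the absolute setting. Since $\sfrac{\bC}{C}$ is itself finitely complete, \Cref{constr:struct-lift-right-res,constr:struct-lift-right-pb} (which depend only on the existence of binary products with parameters and of pullbacks) can be read inside $\sfrac{\bC}{C}$ without modification, while \Cref{constr:struct-tc-pullback-stable-slice} is already tailored to the slice setting.

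Concretely, I would proceed as follows. Starting from a lifting structure of $U \to V$ against $E \to B$ in $\sfrac{\bC}{C}$, let $q \colon t_*(W \times_C B) \to B$ denote the bottom map of the assumed pullback square. Applying \Cref{constr:struct-lift-right-res} inside $\sfrac{\bC}{C}$ right-restricts this lifting structure along $q$, producing a lifting structure of $U \to V$ against $E \to B$ restricted along $q$. Next, since by hypothesis the map $t_*(W \times_C E) \to t_*(W \times_C B)$ appears as a pullback of $E \to B$ along $q$, an application of \Cref{constr:struct-lift-right-pb} in $\sfrac{\bC}{C}$ converts the restricted lifting structure into an unrestricted lifting structure of $U \to V$ against $t_*(W \times_C E) \to t_*(W \times_C B)$. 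Finally, feeding this into \Cref{constr:struct-tc-pullback-stable-slice} yields the desired lifting structure of $t^*U \to W$ against $E \to B$ in $\sfrac{\bC}{C}$.

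There is no substantial obstacle here, since every construction we invoke is already established earlier in the excerpt. The only points worth checking are bookkeeping: that the exponentiability hypotheses required by \Cref{constr:struct-tc-pullback-stable-slice} on $U$, $V$, and on the map $t$ are precisely those assumed in the statement of the theorem, and that the square exhibiting $t_*(W \times_C E) \to t_*(W \times_C B)$ as a pullback of $E \to B$ is genuinely a pullback in $\sfrac{\bC}{C}$ rather than merely in $\bC$; the latter is automatic because pullbacks in the slice agree with pullbacks in the ambient category, as recalled in \Cref{lem:pshfw-slice}.
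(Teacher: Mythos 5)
Your argument is exactly the paper's: the proof there consists of citing \Cref{constr:struct-tc-pullback-stable-slice,constr:struct-lift-right-res,constr:struct-lift-right-pb}, chained in precisely the order you describe (restrict on the right along $t_*(W \times_C B) \to B$, remove the restriction via the assumed pullback, then feed the result into the slice version of the pullback-stability construction). Your bookkeeping remarks about exponentiability and about pullbacks in the slice agreeing with those in $\bC$ are also consistent with what the paper implicitly relies on.
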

\begin{proof}
  By
  \Cref{constr:struct-tc-pullback-stable-slice,constr:struct-lift-right-res,constr:struct-lift-right-pb}.
\end{proof}

\subsection*{Slice Lifting Structures in CwRs}
In order to categorically axiomatise pattern matching operations such as
$\MsJ$-elimination in the framework CwRs, we require the construction of CwRs
freely extended with a formal lifting structure in slice categories.
The idea is the same as \Cref{constr:free-lift-ext}.


\begin{construction}\label{constr:free-lift-ext-slice}
  Let $\bC$ be a CwR and fix maps $U \to V \to V'$ between exponential objects
  and $E \to B$ the slice over a fixed object $C \in \bC$.
  Define the CwR obtained from $\bC$ by freely extending with a lifting
  structure of $U \to V$ against $E \to B$ restricted along $V \to V'$ in the
  slice over $C$ as the following bipushout in $\CwR$ using
  \Cref{constr:cwr-free-quotient}.
  \begin{lrbox}{\UVVp}
    \scriptsize\begin{tikzcd}[column sep=small, row sep=small]
      U \ar[d] \\ V \ar[r] & V'
    \end{tikzcd}
  \end{lrbox}
  \begin{lrbox}{\EB}
    \scriptsize\begin{tikzcd}[column sep=small, row sep=small]
      E \ar[d] \\ B
    \end{tikzcd}
  \end{lrbox}
  \begin{lrbox}{\UV}
    \scriptsize\begin{tikzcd}[column sep=small, row sep=small]
      U \ar[d] \\ V
    \end{tikzcd}
  \end{lrbox}
  \begin{lrbox}{\EBBp}
    \scriptsize\begin{tikzcd}[column sep=small, row sep=small]
      & E \ar[d] \\ B' \ar[r] & B
    \end{tikzcd}
  \end{lrbox}
  \begin{equation*}
    \bC\left[
      {\scriptsize\begin{tikzcd}[column sep=small, row sep=small]
          U \ar[d] \\ V \ar[r] & V'
        \end{tikzcd}}
      \fracsquareslash{C}
      {\scriptsize\begin{tikzcd}[column sep=small, row sep=small]
          E \ar[d] \\ B
        \end{tikzcd}}
    \right]
    \coloneqq \bC
    \cup_{\set{\textcolor{red0}{\bullet \to {}} \textcolor{darkblue0}{\bullet} \textcolor{yellow0}{{} \leftarrow \bullet}}}
    \left\{
      {\scriptsize\begin{tikzcd}[column sep=small, row sep=small]
          \textcolor{red0}{[U,E]_C \times_{[U,B]_C} [V',B]_C} \ar[d, color=red0]
          \ar[r]
          &
          \textcolor{yellow0}{[V,E]_C} \ar[ld, color=yellow0]
          \\
          \textcolor{darkblue0}{[U,E]_C \times_{[U,B]_C} [V,B]_C}
        \end{tikzcd}}
    \right\}
  \end{equation*}
  %
  %
  %
  Dually, if $U \to V$ is a map between exponentiable objects in
  $\sfrac{\bC}{C}$ and one has maps $E \to B$ along with $B' \to B$ in
  $\sfrac{\bC}{C}$ then the CwR with a formal lifting structure of $U \to V$
  against $E \to B$ restricted along $B' \to B$ in the slice over $C$ is
  obtained by the following bipushout in $\CwR$ using
  \Cref{constr:cwr-free-quotient}.
  \begin{equation*}
    \bC\left[
      {\scriptsize\begin{tikzcd}[column sep=small, row sep=small]
          U \ar[d] \\ V
        \end{tikzcd}}
      \fracsquareslash{C}
      {\scriptsize\begin{tikzcd}[column sep=small, row sep=small]
          & E \ar[d] \\ B' \ar[r] & B
        \end{tikzcd}}
    \right]
    \coloneqq \bC
    \cup_{\set{\textcolor{red0}{\bullet \to {}} \textcolor{darkblue0}{\bullet} \textcolor{yellow0}{{} \leftarrow \bullet}}}
    \left\{
      {\scriptsize\begin{tikzcd}[column sep=small, row sep=small]
          \textcolor{red0}{[U,E]_C \times_{[U,B]_C} [V,B']_C} \ar[d, color=red0]
          \ar[r]
          &
          \textcolor{yellow0}{[V,E]_C} \ar[ld, color=yellow0]
          \\
          \textcolor{darkblue0}{[U,E]_C \times_{[U,B]_C} [V,B]_C}
        \end{tikzcd}}
    \right\}
  \end{equation*}
\end{construction}

\begin{theorem}\label{thm:free-lift-ext-slice}
  Let $\bC$ be a CwR and fix maps $U \to V \to V'$ between representable objects
  (i.e. objects whose maps into the terminal object are representable) and
  $E \to B$ in $\bC$.
  Then, isomorphism classes of maps as on the left below correspond bijectively
  to isomorphism classes of maps $F \colon \bC \to \bD \in \CwR$ equipped with a
  choice of a lifting structure of $FU \to FV$ against $FE \to FB$ restricted
  along $FV \to FV'$ over the slice $FC$.
  \begin{center}
    \begin{minipage}{0.45\linewidth}
      \begin{equation*}
        \bC\left[
          {\scriptsize\begin{tikzcd}[column sep=small, row sep=small]
              U \ar[d] \\ V \ar[r] & V'
            \end{tikzcd}}
          \fracsquareslash{C}
          {\scriptsize\begin{tikzcd}[column sep=small, row sep=small]
              E \ar[d] \\ B
            \end{tikzcd}}
        \right] \to \bD \in \CwR
      \end{equation*}
    \end{minipage}
    \begin{minipage}{0.45\linewidth}
      \begin{equation*}
        \bC\left[
          {\scriptsize\begin{tikzcd}[column sep=small, row sep=small]
              U \ar[d] \\ V \ar[r] & V'
            \end{tikzcd}}
          \fracsquareslash{C}
          {\scriptsize\begin{tikzcd}[column sep=small, row sep=small]
              E \ar[d] \\ B
            \end{tikzcd}}
        \right] \to \bD \in \CwR
      \end{equation*}
    \end{minipage}
  \end{center}
  Dually, given $B' \to B$ in $\bC$, isomorphism classes of maps as on the right
  above correspond to isomorphism classes of maps
  $F \colon \bC \to \bD \in \CwR$ equipped with a choice of a lifting structure
  of $FU \to FV$ against $FE \to FB$ restricted along $FB' \to FB$ over the
  slice $FC$.
\end{theorem}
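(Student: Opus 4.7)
The plan is to adapt, verbatim, the proof of \Cref{thm:free-lift-ext} with every categorical construction reinterpreted inside the slice $\sfrac{\bC}{C}$. First, I would invoke the universal property of the bipushout in \Cref{constr:free-lift-ext-slice}: a map from $\bC\left[\cdots \fracsquareslash{C} \cdots\right]$ to $\bD$ in $\CwR$ amounts to the data of a map $F \colon \bC \to \bD$ of CwRs together with a chosen arrow
\[
  F([U,E]_C \times_{[U,B]_C} [V',B]_C) \longrightarrow F([V,E]_C)
\]
lying over $F([U,E]_C \times_{[U,B]_C} [V,B]_C)$.

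The next step is to translate such an arrow into a slice-local lifting structure. Because $F$ is a map of CwRs, it preserves finite limits and pushforwards along representable maps. The hypothesis that $U,V,V'$ are representable objects makes the structural projections $U \to C$, $V \to C$, $V' \to C$ representable, and the slice exponentials $[U,-]_C$, $[V,-]_C$, $[V',-]_C$ are constructed as pullback followed by pushforward along these projections. Preservation of such pushforwards by $F$ therefore forces $F([A,D]_C) \cong [FA, FD]_{FC}$ for each $A$ among $U,V,V'$ and $D$ among $E,B$, and consequently $F$ sends the entire cospan in $\sfrac{\bC}{C}$ used in \Cref{constr:free-lift-ext-slice} to the analogous cospan in $\sfrac{\bD}{FC}$ with every letter replaced by its image under $F$.

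Finally, I would apply \Cref{cor:stable-left-rep} inside the slice CwR $\sfrac{\bD}{FC}$, where $FU, FV, FV'$ are representable and hence exponentiable, to identify the arrow produced above with a left-restricted lifting structure of $FU \to FV$ against $FE \to FB$ along $FV \to FV'$, taken in $\sfrac{\bD}{FC}$. The right-restricted case is entirely dual and uses \Cref{cor:stable-right-rep} in place of \Cref{cor:stable-left-rep}. The main technical point, and the only nontrivial step beyond mechanically transporting the earlier argument, is the preservation of the slice internal-Homs by $F$; this is precisely where the representability hypothesis on $U,V,V'$ does essential work, since pushforwards along representable maps are part of the CwR structure that $\CwR$-morphisms are required to preserve, whereas pushforwards along arbitrary exponentiable maps are not.
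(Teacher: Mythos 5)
Your proposal is correct and matches the paper's approach: the paper's proof is literally ``Identical to \Cref{thm:free-lift-ext},'' and your argument is exactly that proof transported to the slice, via the universal property of the bipushout in \Cref{constr:free-lift-ext-slice} followed by the internalisation of \Cref{cor:stable-left-rep,cor:stable-right-rep}. You also correctly isolate the one point that makes the transport legitimate — that $F$ preserves the slice internal-Homs $[U,-]_C$, $[V,-]_C$, $[V',-]_C$ because these are pushforwards along the representable structural maps $U,V,V' \to C$, which $\CwR$-morphisms preserve.
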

\begin{proof}
  Identical to \Cref{thm:free-lift-ext}.
\end{proof}


\section{Structured Approximations of Pushout-Product}\label{sec:pushout-product-approx}
In preparation for the structured version of Leibniz transposes in the
subsequent \Cref{subsec:struct-ltrans}, we first motivate and introduce a structured
approximation of the pushout-product.

As motivation, suppose we are given two maps, which we think of as ``boundary
inclusions'' and denote their domains and codomains using the boundary symbol,
$\partial V \to V$ and $\partial L \to L$.
Usually, the boundary $\partial(V \times L)$ of the product $V \times L$ is
constructed by the pushout-product
$\partial V \times L \cup V \times \partial L \to V \times L$.
However, because the pushout-product relies on the left concept of pushouts, and
we only assume our universe of discourse $\bC$ to be locally cartesian closed,
these pushouts may not exist in the first place.
Therefore, we would like to have a concept of maps that behaves sufficiently
like such a pushout-product.

The usual product boundary $\partial V \times L \cup V \times \partial L$, being
a pushout, has the universal property that maps
$\partial V \times L \cup V \times \partial L \to E$ are in bijective
correspondence with pairs of maps $\partial V \times L \to E$ and
$V \times \partial L \to E$.
But because in a cartesian closed category, products are left adjoint to
exponential, taking products preserve pushouts.
This means that for each $X \in \bC$, one in fact has
$X \times (\partial V \times L \cup V \times \partial L \to E) \cong X \times
\partial V \times L \cup X \times V \times \partial L$.

Therefore, we would like to say that an object $E$ believes an object
$\partial(V \times L)$ behaves like the product boundary when, for each
$X \in \bC$, one has a natural bijective correspondence between maps $f$ and
undercones
$(f|_{X \times \partial V \times L}, f|_{X \times V \times \partial L})$ as
follows.
\begin{center}
  \begin{minipage}{0.45\linewidth}
    \begin{equation*}
      X \times \partial(V \times L) \xrightarrow{f} E
    \end{equation*}
  \end{minipage}
  \begin{minipage}{0.45\linewidth}
    \begin{equation*}
      \begin{tikzcd}[cramped]
        {X \times \partial V \times \partial L} & {X \times V \times \partial L} \\
        {X \times \partial V \times L} & E
        \arrow[from=1-1, to=1-2]
        \arrow[from=1-1, to=2-1]
        \arrow["{f|_{X \times V \times \partial L}}", dashed, from=1-2, to=2-2]
        \arrow["{f|_{X \times \partial V \times L}}"', dashed, from=2-1, to=2-2]
      \end{tikzcd}
    \end{equation*}
  \end{minipage}
\end{center}
Such parameterised beliefs can be packaged into the following definition using
cartesian closeness.
\begin{definition}
  Given maps $\partial V \to V$ and $\partial L \to L$ between exponentiable
  objects in $\bC$, a \emph{product boundary approximation structure} on an
  exponentiable object $\partial(V \times L)$ relative to an object $E$ is an
  isomorphism
  \begin{equation*}
    \textcolor{darkblue0}{
      [\partial V \times L, E] \times_{[\partial V \times \partial L, E]} [V \times \partial L, E]
    }
    \xrightarrow{\cong}
    \textcolor{darkblue0}{[\partial(V \times L), E]}
  \end{equation*}
\end{definition}

Next, suppose $E$ and $B$ both believe $\partial(V \times L)$ behaves like the
product boundary.
Then, $E$ thinks that each map $f \colon X \times \partial(V \times L) \to E$ is
completely determined by the two components
$f|_{X \times \partial V \times L}$ and
$f|_{X \times V \times \partial L}$.
Now, given a map $p \colon E \to B$, we can post-compose to obtain a map
$pf \colon X \times \partial(V \times L) \to E \to B$.
Then, $B$ also thinks $pf$ is completely determined by the two components
$(pf)|_{X \times \partial V \times L}$ and
$(pf)|_{X \times V \times \partial L}$.
We want to say that composing with $p$ does not destroy the hallucinations $E$
and $B$ have about $\partial(V \times L) \to V \times L$ when the hallucinated
component restrictions $(pf)|_{X \times \partial V \times L}$ and
$(pf)|_{X \times V \times \partial L}$ of $B$ are respectively obtained by
composing $p$ with the hallucinated component restrictions
$f|_{X \times \partial V \times L}$ and $f|_{X \times V \times \partial L}$ of
$E$.
\begin{equation*}
  \begin{tikzcd}
    {X \times \partial V \times \partial L} & {X \times V \times \partial L} \\
    {X \times \partial V \times L} & {X \times \partial(V \times L)} \\
    && E \\
    &&& {B}
    \arrow[from=1-1, to=1-2]
    \arrow[from=1-1, to=2-1]
    \arrow[dotted, from=1-2, to=2-2]
    \arrow["{f|_{X \times V \times \partial L}}"{description, pos=0.6}, curve={height=-12pt}, dashed, from=1-2, to=3-3]
    \arrow["{(pf)|_{X \times V \times \partial L} = p(f|_{X \times V \times \partial L})}", curve={height=-30pt}, dashed, from=1-2, to=4-4]
    \arrow[dotted, from=2-1, to=2-2]
    \arrow["{f|_{X \times \partial V \times L}}"{description, pos=0.6}, curve={height=12pt}, dashed, from=2-1, to=3-3]
    \arrow["{(pf)|_{X \times \partial V \times L} = p(f|_{X \times \partial V \times L})}"', curve={height=30pt}, dashed, from=2-1, to=4-4]
    \arrow["f"{description}, dashed, from=2-2, to=3-3]
    \arrow["p", from=3-3, to=4-4]
  \end{tikzcd}
\end{equation*}
We once again can package such hallucinogenic maps using internal-Homs.
\begin{definition}
  Fix maps $\partial V \to V$ and $\partial L \to L$ between exponentiable
  objects in $\bC$.
  Suppose that an exponentiable object $\partial(V \times L)$ is equipped with
  product approximation structures $s_E$ and $s_B$ respectively relative to $E$
  and $B$.
  A map $p$ \emph{preserves boundary approximation structures} when the
  following diagram commutes.
  \begin{equation*}
    \begin{tikzcd}[cramped]
      {\textcolor{darkblue0}{[\partial V \times L, E] \times_{[\partial V \times \partial L, E]} [V \times \partial L, E]}} & {\textcolor{darkblue0}{[\partial(V \times L), E]}} \\
      {\textcolor{darkblue0}{[\partial V \times L, B] \times_{[\partial V \times \partial L, B]} [V \times \partial L, B]}} & {\textcolor{darkblue0}{[\partial(V \times L), B]}}
      \arrow["\cong", tail reversed, from=1-1, to=1-2]
      \arrow[color=darkblue0, from=1-1, to=2-1]
      \arrow[color=darkblue0, from=1-2, to=2-2]
      \arrow["\cong"', tail reversed, from=2-1, to=2-2]
    \end{tikzcd}
  \end{equation*}
\end{definition}

We have now defined what is means for an \emph{object} $\partial(V \times L)$ to
behave like the \emph{boundary object} of $V \times L$.
In the usual pushout-product definition, such boundary objects
$\partial V \times L \cup V \times \partial L$ are equipped with inclusions into
the actual product object $V \times L$.
We can also say that a map $\partial(V \times L) \to V \times L$ behaves like a
pushout-product inclusion, but again only relative to some object $B$ in a state
of delusion.
When we have such a map $\partial(V \times L) \to V \times L$ like this, given
any map $g \colon X \times V \times L \to B$, we can restrict it to get a map
$g|_{X \times \partial(V \times L)} \colon X \times \partial(V \times L) \to B$.
By hallucinations, $g|_{X \times \partial(V \times L)}$ is completely determined
by its further restrictions
$(g|_{X \times \partial(V \times L)})|_{X \times \partial V \times L}$ and
$(g|_{X \times \partial(V \times L)})|_{X \times V \times \partial L}$.
On the other hand, we could have just gotten to these restrictions in one step
using $g|_{X \times \partial V \times L}$ and
$g|_{X \times V \times \partial L}$ without passing through
$X \times \partial(V \times L) \to X \times V \times L$.
A sufficiently well-crafted fake $\partial(V \times L) \to V \times L$ should
deceive $B$ to make it into thinking these two ways of restricting and
amalgamation are the same.
\begin{equation*}
  \begin{tikzcd}
    {X \times \partial V \times \partial L} & {X \times V \times \partial L} \\
    {X \times \partial V \times L} & {X \times \partial(V \times L)} \\
    && {X \times V \times L} \\
    &&& {B}
    \arrow[from=1-1, to=1-2]
    \arrow[from=1-1, to=2-1]
    \arrow[dotted, from=1-2, to=2-2]
    \arrow[curve={height=-12pt}, from=1-2, to=3-3]
    \arrow["{g|_{X \times V \times \partial L} = (g|_{X \times \partial(V \times L)})|_{X \times V \times \partial L}}", curve={height=-30pt}, dashed, from=1-2, to=4-4]
    \arrow[dotted, from=2-1, to=2-2]
    \arrow[curve={height=12pt}, from=2-1, to=3-3]
    \arrow["{g|_{X \times \partial V \times L} = (g|_{X \times \partial(V \times L)})|_{X \times \partial V \times L}}"', curve={height=30pt}, dashed, from=2-1, to=4-4]
    \arrow[from=2-2, to=3-3]
    \arrow["g"{description}, dashed, from=3-3, to=4-4]
  \end{tikzcd}
\end{equation*}
Once again, using internal-Homs we have a very concise definition of the above
phenomenon.
\begin{definition}
  Fix maps $\partial V \to V$ and $\partial L \to L$ between exponentiable
  objects.
  A map $\partial(V \times L) \to V \times L$ between exponentiable objects
  realises a product boundary structure
  $\textcolor{darkblue0}{ [\partial V \times L, B] \times_{\partial V \times
      \partial L, E} [V \times \partial L, B] } \xrightarrow{\cong}
  \textcolor{darkblue0}{[\partial(V \times L), B]}$ on $\partial(V \times L)$
  relative to $B$ as a \emph{pushout-product approximation structure} when the
  following triangle commutes
  \begin{equation*}
    \begin{tikzcd}[cramped]
      {\textcolor{darkblue0}{[\partial V \times L, B] \times_{[\partial V \times \partial L, B]} [V \times \partial L, B]}} && {\textcolor{darkblue0}{[\partial(V \times L), B]}} \\
      & {\textcolor{darkblue0}{[V \times L, B]}}
      \arrow["\cong", from=1-1, to=1-3]
      \arrow[color=darkblue0, from=2-2, to=1-1]
      \arrow[color=darkblue0, from=2-2, to=1-3]
    \end{tikzcd}
  \end{equation*}
\end{definition}

We can now combine everything together to say when a map thinks another map
behaves like the pushout product.

\begin{definition}\label{def:pushout-product-approx}
  Given maps $\partial V \to V$ and $\partial L \to L$ and $E \to B$ where
  $\partial V, V, \partial L, L$ are exponentiable objects in $\bC$.

  A \emph{$(\partial V \to V) \ltimes (\partial L \to L)$-approximation
    structure on $\partial(V \times L) \to V \times L $ relative to $E \to B$}
  consists of product boundary approximation structures $s_E$ and $s_B$ on the
  object $\partial(V \times L)$, which has to be exponentiable, relative to $E$
  and $B$ such that
  \begin{itemize}
    \item $E \to B$ preserves the product boundary approximation structures; and
    \item $\partial(V \times L) \to V \times L$ realises $s_B$ as a
    $(\partial V \to V) \ltimes (\partial L \to L)$ approximation structure
    relative to $B$.
  \end{itemize}
  In other words, one requires black isomorphisms as below making the diagram
  commute.
  \begin{equation}\label{eqn:pushout-product-approx}\tag{\textsc{pushout-product-approx}}
    \begin{tikzcd}[cramped]
      {\textcolor{darkblue0}{[\partial V \times L, E] \times_{[\partial V \times \partial L, E]} [V \times \partial L, E]}} & {\textcolor{darkblue0}{[\partial(V \times L), E]}} \\
      {\textcolor{darkblue0}{[\partial V \times L, B] \times_{[\partial V \times \partial L, B]} [V \times \partial L, B]}} & {\textcolor{darkblue0}{[\partial(V \times L), B]}} \\
      {\textcolor{darkblue0}{[V \times L, B]}} & {\textcolor{darkblue0}{[V \times L, B]}}
      \arrow["\cong"{description}, tail reversed, from=1-1, to=1-2]
      \arrow[from=1-1, to=2-1, color=darkblue0]
      \arrow[from=1-2, to=2-2, color=darkblue0]
      \arrow["\cong"{description}, tail reversed, from=2-1, to=2-2]
      \arrow[from=3-1, to=2-1, color=darkblue0]
      \arrow["="{description}, tail reversed, from=3-1, to=3-2]
      \arrow[from=3-2, to=2-2, color=darkblue0]
    \end{tikzcd}
  \end{equation}
\end{definition}

Throughout the rest of this section, we establish analogues of constructions
from \Cref{subsec:struct-lift-rebase}.
Specifically, we construct, like in \Cref{constr:struct-lift-pb}, that
structured approximations of the pushout-product are stable under pullback.
We also construct, like in \Cref{constr:struct-lift-postcomp}, that certain
forms of structured approximations of the pushout-product are stable under the
left adjoint to the pullback functor (i.e. the post-composition map).

We start with the analogue of \Cref{constr:struct-lift-pb}.
\begin{lemma}\label{lem:pushout-product-approx-pb}
  Fix map $\phi \colon D \to C$ along with maps $\partial J \to J$ and
  $\partial V \to V$ and $E \to B$ all in $\sfrac{\bC}{C}$, where
  $\partial J, J, \partial V, V$ are exponentiable objects in $\sfrac{\bC}{C}$.

  Suppose
  \begin{equation*}
    \partial(J \times_C V) \to J \times_C V
  \end{equation*}
  has a
  $(\partial V \to V) \ltimes_C (\partial J \to J)$-approximation structure
  relative to $E \to B$ in $\sfrac{\bC}{C}$.
  Then, the pullback
  \begin{equation*}
    \partial(J \times_C V) \to \phi^*J \times_D \phi^*V
  \end{equation*}
  has a
  $(\phi^*(\partial V) \to \phi^*V) \ltimes_D (\phi^*(\partial J) \to
  \phi^*J)$-approximation structure relative to the rebased map
  $\phi^*E \to \phi^*B$ in $\sfrac{\bC}{D}$.
\end{lemma}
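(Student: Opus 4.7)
The plan is to apply the pullback functor $\phi^* \colon \sfrac{\bC}{C} \to \sfrac{\bC}{D}$ to the defining diagram~\eqref{eqn:pushout-product-approx} of the given approximation structure. The candidate witness downstairs is $\phi^*(\partial(J \times_C V))$, mapping into $\phi^*(J \times_C V) \cong \phi^*J \times_D \phi^*V$ via the image of the original inclusion.

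Two standard facts do all the work. First, $\phi^*$ is a right adjoint (to $\phi_!$), so it preserves all limits; in particular, every fibered product $\times_C$ appearing in~\eqref{eqn:pushout-product-approx} and every pullback of the form $[X, E]_C \times_{[X, B]_C} [Y, B]_C$ is preserved. Second, by Frobenius reciprocity $\phi_!(Z \times_D \phi^*X) \cong \phi_!Z \times_C X$ we obtain a natural isomorphism $\phi^*[X, Y]_C \cong [\phi^*X, \phi^*Y]_D$ for every exponentiable $X \in \sfrac{\bC}{C}$; in particular, $\phi^*$ carries exponentiable objects to exponentiable objects and commutes with the formation of internal-Homs. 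Consequently, corner-by-corner, each $[-,-]_C$ in~\eqref{eqn:pushout-product-approx} is sent to $[\phi^*(-), \phi^*(-)]_D$ and each $\times_C$ to $\times_D$. The horizontal isomorphisms remain isomorphisms under the functor $\phi^*$, and the commuting square and triangle are preserved by functoriality. This simultaneously exhibits the product boundary approximation structures on $\phi^*(\partial(J \times_C V))$ relative to $\phi^*E$ and $\phi^*B$, the preservation of these structures by $\phi^*E \to \phi^*B$, and the realisation of the bottom structure by $\phi^*(\partial(J \times_C V)) \to \phi^*J \times_D \phi^*V$ as a pushout-product approximation relative to $\phi^*B$.

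The main obstacle is essentially bookkeeping: consistently identifying $\phi^*$ applied to a $C$-relative internal-Hom with the corresponding $D$-relative internal-Hom, and checking that the fibered products $\partial V \times_C \partial J$, $\partial V \times_C J$, and $V \times_C \partial J$ are carried to their $D$-counterparts. Once the Frobenius identification is set up once and for all at each vertex, the verification of~\eqref{eqn:pushout-product-approx} over $D$ is a single diagram chase that reduces to the functoriality of $\phi^*$, with no further calculation required.
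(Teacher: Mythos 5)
Your proposal is correct and takes essentially the same route as the paper, whose proof is simply the observation that the pullback functor preserves limits and internal-Homs (as in \Cref{constr:struct-lift-pb}); you have merely spelled out the Frobenius/Beck--Chevalley identification $\phi^*[X,Y]_C \cong [\phi^*X,\phi^*Y]_D$ and the corner-by-corner bookkeeping that the paper leaves implicit.
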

\begin{proof}
  This follows from the fact that the pullback functor preserves both limits and
  internal-Homs, much like in \Cref{constr:struct-lift-pb}.
\end{proof}

We also have an analogue of \Cref{constr:struct-lift-postcomp}.
\begin{lemma}\label{lem:pushout-product-approx-postcomp}
  Fix map $\phi \colon C \to D$ along with maps $\partial J \to J$ in
  $\sfrac{\bC}{C}$ and $\partial V \to V$ and $E \to B$ in $\sfrac{\bC}{D}$
  where $\partial J, J, \partial V, V$ are exponentiable in their respective
  slice categories.

  Suppose
  \begin{equation*}
    \partial(V \times_C p^*J) \to V \times_C p^*J
  \end{equation*}
  has a
  $(\partial V \to V) \ltimes_C (p^*(\partial J) \to p^*J)$-approximation structure
  relative to $p^*E \to p^*B$ in $\sfrac{\bC}{C}$.
  Then, under the $p_!$ postcomposition functor
  \begin{equation*}
    p_!(\partial(V \times_C p^*J)) \to p_!(V \times_C p^*J)
  \end{equation*}
  inherits a
  $(p_!(\partial V) \to p_!V) \ltimes_D (\partial J \to J)$-approximation
  structure relative to $E \to B$ in $\sfrac{\bC}{D}$.
\end{lemma}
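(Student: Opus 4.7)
The plan is to mirror the argument of \Cref{constr:struct-lift-postcomp}: apply the right adjoint $p_*$ termwise to the assumed approximation-structure diagram \eqref{eqn:pushout-product-approx} in $\sfrac{\bC}{C}$ for $\partial(V \times_C p^*J) \to V \times_C p^*J$ relative to $p^*E \to p^*B$, and then use \Cref{lem:pushforward-pb-left-adj} to translate the result into the desired approximation-structure diagram in $\sfrac{\bC}{D}$ for $p_!(\partial(V \times_C p^*J)) \to p_!(V \times_C p^*J)$ relative to $E \to B$.

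First, since $p$ is exponentiable, $p_*$ exists and, being a right adjoint, preserves pullbacks, isomorphisms, and commuting diagrams. Applying $p_*$ entry-by-entry to the given diagram therefore produces a commutative diagram of the same shape in $\sfrac{\bC}{D}$ whose entries all have the form $p_*[X, p^*Y]_C$, where $X$ ranges over the relevant fibered products built from $\partial V, V, p^*(\partial J), p^*J$ together with $\{\partial(V \times_C p^*J), V \times_C p^*J\}$, and $Y \in \{E, B\}$.

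Next, I would invoke the second item of \Cref{lem:pushforward-pb-left-adj} to convert each such entry via the natural isomorphism $p_*[X, p^*Y]_C \cong [p_!X, Y]_D$, and then simplify the $p_!$'s that appear using the first item of that same lemma: $p_!(\partial V \times_C p^*Z) \cong p_!(\partial V) \times_D Z$ and $p_!(V \times_C p^*Z) \cong p_!V \times_D Z$ for $Z \in \{\partial J, J\}$. Under these identifications the translated diagram acquires exactly the shape of \eqref{eqn:pushout-product-approx} in $\sfrac{\bC}{D}$ for the map $p_!(\partial(V \times_C p^*J)) \to p_!V \times_D J$ relative to $E \to B$, with left maps $(p_!(\partial V) \to p_!V)$ and $(\partial J \to J)$, and this is the claimed $(p_!(\partial V) \to p_!V) \ltimes_D (\partial J \to J)$-approximation structure.

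The principal bookkeeping point, which I expect to be the main obstacle, is verifying that the bottom-to-middle realization arrow of \eqref{eqn:pushout-product-approx}, originally induced by $\partial(V \times_C p^*J) \to V \times_C p^*J$, becomes after translation the arrow induced by $p_!(\partial(V \times_C p^*J)) \to p_!(V \times_C p^*J)$. This reduces to naturality of the hom-isomorphism $p_*[-, p^*Y]_C \cong [p_!(-), Y]_D$ in its first argument, together with the canonical comparison $p_!(V \times_C p^*J) \cong p_!V \times_D J$; once this naturality square is in place, commutativity and invertibility of the remaining arrows in the translated diagram follow formally from $p_*$ being a right adjoint.
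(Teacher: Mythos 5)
Your proposal is correct and matches the paper's own argument: the paper likewise applies the right adjoint $p_*$ to the approximation-structure diagram in $\sfrac{\bC}{C}$ and uses both parts of \Cref{lem:pushforward-pb-left-adj} to rewrite $p_*[X,p^*Y]_C \cong [p_!X,Y]_D$ and $p_!(X \times_C p^*Z) \cong p_!X \times_D Z$, exactly mirroring \Cref{constr:struct-lift-postcomp}. Your extra remark on naturality of the hom-isomorphism in the first argument (needed for the realization arrow) is a detail the paper leaves implicit, not a departure from its method.
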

\begin{proof}
  Much like \Cref{constr:struct-lift-postcomp} by using
  \Cref{lem:pushforward-pb-left-adj}.
  For example, by assumption, one has that
  $[\partial V \times_C p^*J, p^*B] \times_{[\partial V \times_C p^*(\partial
    J), p^*B]} [V \times p^*(\partial J), p^*B] \cong [\partial(V \times_C
  p^*J), p^*B]$, and the image of this isomorphism under the right adjoint
  $p_* \colon \sfrac{\bC}{C} \to \sfrac{\bC}{D}$ is
  $[p_!(\partial V) \times_D J, B] \times_{[p_!(\partial V) \times_D \partial J,
    B]} [p_!V \times \partial J, B] \cong [p_!\partial(V \times_C p^*J), B]$.
\end{proof}

We finish by showing that approximations of pushout-products are associative.
\begin{theorem}\label{thm:pushout-product-approx-assoc}
  Suppose that one has three maps $\partial U \to U$ and $\partial V \to V$ and
  $\partial W \to W$ between exponentiable objects along with two approximations
  of pushout-products $\partial(U \times V) \to U \times V$ approximating
  $(\partial U \to U) \ltimes (\partial V \to V)$ and
  $\partial(V \times W) \to V \times W$ approximating
  $(\partial V \to V) \ltimes (\partial W \to W)$ with respect to a map
  $E \to B$ between exponentiable objects.

  A map $\partial(U \times (V \times W)) \to U \times V \times W$ approximates
  the pushout-product
  \begin{align*}
    (\partial U \to U) \ltimes (\partial(V \times W) \to V \times W)
  \end{align*}
  relative to $E \to B$ if and only if it also approximates the pushout-product
  \begin{align*}
   (\partial(U \times V) \to U \times V) \ltimes (\partial W \to W)
  \end{align*}
  relative to $E \to B$.
\end{theorem}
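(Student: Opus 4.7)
The plan is to unfold both pushout-product approximation conditions using the given approximations for $\partial(U \times V)$ and $\partial(V \times W)$, and observe that each one reduces to the same iterated pullback representing the ``triple boundary'' of $U \times V \times W$. Passing through this common characterisation then furnishes the required bijection between the two approximation structures.

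First I would use the fact that for exponentiable objects the internal-Hom satisfies $[A \times B, X] \cong [A, [B, X]]$, so that any given approximation structure on $\partial(V \times W) \to V \times W$ relative to $E \to B$ induces, for every exponentiable $A$, a natural isomorphism
\[
[A \times \partial(V \times W), X] \;\cong\; [A \times \partial V \times W, X] \times_{[A \times \partial V \times \partial W, X]} [A \times V \times \partial W, X]
\]
for both $X = E$ and $X = B$, and analogously for $\partial(U \times V)$. Instantiating this with $A = \partial U, U$ (respectively $A = \partial W, W$), I would expand the defining pullback isomorphism of a $(\partial U \to U) \ltimes (\partial(V \times W) \to V \times W)$-approximation into an iterated pullback of the six ``codimension-one'' terms $[\partial U \times V \times W, X]$, $[U \times \partial V \times W, X]$, $[U \times V \times \partial W, X]$, together with their ``codimension-two'' pairwise intersections $[\partial U \times \partial V \times W, X]$, $[\partial U \times V \times \partial W, X]$, $[U \times \partial V \times \partial W, X]$, and the ``codimension-three'' triple intersection $[\partial U \times \partial V \times \partial W, X]$. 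The same unfolding of a $(\partial(U \times V) \to U \times V) \ltimes (\partial W \to W)$-approximation yields the very same iterated pullback, by associativity and commutativity of limits.

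The main obstacle will be bookkeeping: identifying the common ``triple boundary'' object as the limit indexed by the poset of proper subsets of $\{U, V, W\}$, where each coordinate is either kept or replaced by its boundary, and then verifying that the two unfoldings really produce this single limit. Once the common characterisation is in hand, the preservation condition along $E \to B$ of \cref{eqn:pushout-product-approx} is automatic, because $[-, -]$ sends the map $E \to B$ to a natural transformation between limit diagrams and the comparison is induced componentwise. The realisation condition is handled separately: the map $\partial(U \times V \times W) \to U \times V \times W$ factors through both intermediate objects $\partial(U \times V) \times W$ and $U \times \partial(V \times W)$ via the given pushout-product approximation maps, so the induced map to $[U \times V \times W, B]$ agrees in either presentation.

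To keep the verification manageable, I would phrase everything on the side of $\widehat{\bC}$, using the Yoneda lemma along the lines of \cref{lem:stable-left-rep} so that the isomorphisms become natural isomorphisms of presheaves of ``partial boundary maps'' out of $- \times U \times V \times W$. In this presheaf-level formulation, the bijection between $(\partial U \to U) \ltimes (\partial(V \times W) \to V \times W)$- and $(\partial(U \times V) \to U \times V) \ltimes (\partial W \to W)$-approximation structures becomes the evident reassociation of a commuting cube of restriction maps, which is the cleanest way to present the argument.
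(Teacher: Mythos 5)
Your proposal is correct and follows essentially the same route as the paper: both arguments transport the two given approximation isomorphisms across the internal-Hom/product adjunction (the paper applies $[X,-]$ to them; you curry via $[A \times B, X] \cong [A,[B,X]]$), exhibit both candidate boundary pullbacks as limits of one common diagram of partial-boundary Hom-objects, and conclude by commutation of limits, with the preservation and realisation conditions following because all comparison maps are induced componentwise. The only difference is cosmetic bookkeeping (your presheaf-level phrasing versus the paper's explicit $3\times 4$ limit diagram), so no further comment is needed.
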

\begin{proof}
  The $\Rightarrow$ and $\Leftarrow$ directions are symmetric.
  We show the $\Rightarrow$ implication.

  By assumption, one has horizontal isomorphisms
  \begin{equation*}
    \begin{tikzcd}
      {[\partial (U \times V) \times W, E] \times_{[\partial (U \times V) \times \partial W, E]}
        [U \times V \times \partial W, E]}
      & {[\partial((U \times V) \times W), E]}
      \\
      {[\partial (U \times V) \times W, B] \times_{[\partial (U \times V) \times \partial W, B]}
        [U \times V \times \partial W, B]}
      & {[\partial((U \times V) \times W), B]}
      \\
      {[U \times V \times W, B]} & {[U \times V \times W, B]}
      \arrow["\cong"{description}, tail reversed, from=1-1, to=1-2]
      \arrow[from=1-1, to=2-1]
      \arrow[from=1-2, to=2-2]
      \arrow["\cong"{description}, tail reversed, from=2-1, to=2-2]
      \arrow[from=3-1, to=2-1]
      \arrow[equals, from=3-1, to=3-2]
      \arrow[from=3-2, to=2-2]
    \end{tikzcd}
  \end{equation*}
  and we must show there exists dashed isomorphisms
  \begin{equation*}
    \begin{tikzcd}
      {[\partial U \times V \times W, E] \times_{[\partial U \times \partial(V \times W), E]}
        [U \times \partial(V \times W), E]}
      & {[\partial((U \times V) \times W), E]}
      \\
      {[\partial U \times V \times W, B] \times_{[\partial U \times \partial(V \times W), B]}
        [U \times \partial(V \times W), B]}
      & {[\partial((U \times V) \times W), B]}
      \\
      {[U \times V \times W, B]} & {[U \times V \times W, B]}
      \arrow["\cong"{description}, dashed, tail reversed, from=1-1, to=1-2]
      \arrow[from=1-1, to=2-1]
      \arrow[from=1-2, to=2-2]
      \arrow["\cong"{description}, dashed, tail reversed, from=2-1, to=2-2]
      \arrow[from=3-1, to=2-1]
      \arrow[equals, from=3-1, to=3-2]
      \arrow[from=3-2, to=2-2]
    \end{tikzcd}
  \end{equation*}

  We show that the left columns of the two diagrams above are isomorphic.
  Because $\partial(U \times V) \to U \times V$ approximates
  $(\partial U \to U) \ltimes (\partial V \to V)$ and
  $\partial(V \times W) \to V \times W$ approximates
  $(\partial V \to V) \ltimes (\partial W \to W)$ with respect to $E \to B$, we
  have that
  \begin{align*}
    [\partial(U \times V), B]
    &\cong
      [\partial U \times V, B] \times_{[\partial U \times \partial V, B]} [U \times \partial V, B]
    \\
    [\partial(V \times W), B]
    &\cong
      [\partial V \times W, B] \times_{[\partial V \times \partial W, B]} [V \times \partial W, B]
  \end{align*}
  Thus, applying $[X,-]$ for any $X \in \bC$, we have
  \begin{align*}
    [\partial(U \times V) \times X, B]
    &\cong
    [\partial U \times V \times X, B]
    \times_{[\partial U \times \partial V \times X, B]}
    [U \times \partial V \times X, B]
    \\
    [X \times \partial(V \times W), B]
    &\cong
      [X \times \partial V \times W, B]
      \times_{[X \times \partial V \times \partial W, B]}
      [X \times V \times \partial W, B]
  \end{align*}
  So,
  $[\partial U \times V \times W, B] \times_{[\partial U \times \partial(V \times W), B]}
  [U \times \partial(V \times W), B]$ and
  $[\partial (U \times V) \times W, B] \times_{[\partial (U \times V) \times \partial W, B]}
  [U \times V \times \partial W, B]$ are both limits of the following diagram.
  \begin{equation*}
    \begin{tikzcd}[cramped]
      {[\partial U \times V \times W, B]} & {[\partial U \times V \times \partial W, B]} & {[\partial U \times V \times \partial W, B]} & {[\partial U \times V \times W, B]} \\
      {[\partial U \times \partial V \times W, B]} & {[\partial U \times \partial V \times \partial W, B]} & {[\partial U \times V \times \partial W, B]} & {[\partial U \times \partial(V \times W), B]} \\
      {[U \times \partial V \times W, B]} & {[U \times \partial V \times \partial W, B]} & {[U \times V \times \partial W, B]} & {[U \times \partial(V \times W), B]} \\
      {[\partial(U \times V) \times W, B]} & {[\partial(U \times V) \times \partial W, B]} & {[U \times V \times \partial W, B]}
      \arrow[from=1-1, to=1-2]
      \arrow[from=1-1, to=2-1]
      \arrow[equals, from=1-2, to=1-3]
      \arrow[from=1-2, to=2-2]
      \arrow[equals, from=1-3, to=2-3]
      \arrow[from=1-4, to=2-4]
      \arrow[from=2-1, to=2-2]
      \arrow[from=2-3, to=2-2]
      \arrow[from=3-1, to=2-1]
      \arrow[from=3-1, to=3-2]
      \arrow[from=3-2, to=2-2]
      \arrow[from=3-3, to=2-3]
      \arrow[from=3-3, to=3-2]
      \arrow[from=3-4, to=2-4]
      \arrow[from=4-1, to=4-2]
      \arrow[from=4-3, to=4-2]
    \end{tikzcd}
  \end{equation*}
  So $[\partial U \times V \times W, B] \times_{[\partial U \times \partial(V \times W), B]}
  [U \times \partial(V \times W), B] \cong
  [\partial (U \times V) \times W, B] \times_{[\partial (U \times V) \times \partial W, B]}
  [U \times V \times \partial W, B]$.
  The same argument applies to show the required isomorphism with $B$ replaced
  with $E$.
\end{proof}


\section{Structured Leibniz Transpose}\label{subsec:struct-ltrans}
Recall the result from Joyal-Tierney calculus \cite[Appendix]{jt07} that given maps
$\partial V \to V$ and $\partial L \to L$, the pushout-product
$\partial V \times L \cup V \times \partial L \to V \times L$ lifts against a
map $E \to B$ exactly when $\partial V \times V$ lifts against the
pullback-power $[L,E] \to [\partial L,E] \times_{[\partial L, B]} [L,B]$.
\begin{center}
  \begin{minipage}{0.45\linewidth}
    \begin{equation*}
      \begin{tikzcd}[cramped]
        {\partial V \times L \cup V \times \partial L} & E \\
        {V \times L} & B
        \arrow[dashed, from=1-1, to=1-2]
        \arrow[from=1-1, to=2-1]
        \arrow[from=1-2, to=2-2]
        \arrow[dotted, from=2-1, to=1-2]
        \arrow[dashed, from=2-1, to=2-2]
      \end{tikzcd}
    \end{equation*}
  \end{minipage}
  \begin{minipage}{0.45\linewidth}
    \begin{equation*}
      \begin{tikzcd}[cramped]
        {\partial V} & {[L,E]} \\
        V & {[\partial L,E] \times_{[\partial L, B]} [L,B]}
        \arrow[dashed, from=1-1, to=1-2]
        \arrow[from=1-1, to=2-1]
        \arrow[from=1-2, to=2-2]
        \arrow[dotted, from=2-1, to=1-2]
        \arrow[dashed, from=2-1, to=2-2]
      \end{tikzcd}
    \end{equation*}
  \end{minipage}
\end{center}

In the previous \Cref{sec:pushout-product-approx}, we defined a notion of
approximations of pushout-products, and in \Cref{sec:struct-lift}, we defined a
notion of structured restricted lifts.
Therefore, we would like to apply these concepts for Joyal-Tierney calculus, by
replacing the pushout-product
$\partial V \times L \cup V \times \partial L \to V \times L$ with an
approximation $\partial(V \times L) \to V \times L$ and the lifting problem
above with structured restricted versions.
This arrives us at the following result.

\begin{theorem}\label{thm:struct-ltrans}
  Let there be maps $\partial V \to V$ and $\partial L \to L$ and
  $E \to \underline{E}$ and $L \to L'$ where $\partial V, V, \partial L, L, L'$
  are exponentiable objects.

  Suppose that $\partial(V \times L) \to V \times L$ structurally approximates
  the pushout-product $(\partial V \to V) \ltimes (\partial L \to L)$ relative
  to the map $E \to \ul{E}$.
  Then, we have a bijection
  \begin{equation*}
    \left(\begin{tikzcd}[cramped, column sep=small]
        \partial(V \times L) \ar[d] \\ V \times L \ar[r] & V \times L'
      \end{tikzcd}
      \squareslash
      \begin{tikzcd}[cramped, column sep=small]
        E \ar[d] \\ \underline{E}
      \end{tikzcd}\right)
    \xrightarrow{\quad\cong\quad}
    \left(\begin{tikzcd}[cramped, column sep=small]
        \partial V \ar[d] \\ V
      \end{tikzcd}
      \squareslash
      \begin{tikzcd}[cramped, column sep=small]
        & {[L,E]} \ar[d] \\
        {[\partial L, E] \times_{[\partial L, \underline{E}]} [L',\underline{E}]} \ar[r] &
        {[\partial L, E] \times_{[\partial L, \underline{E}]} [L,\underline{E}]}
      \end{tikzcd}\right)
  \end{equation*}
\end{theorem}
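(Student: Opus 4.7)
The plan is to apply the internalization corollaries to both sides and identify the resulting hom-sets by means of currying together with the pushout-product approximation structure.

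First, applying \Cref{cor:stable-left-rep} to the LHS rewrites the set of left-restricted uniform lifts as the hom-set in the slice category over $[\partial(V\times L), E] \times_{[\partial(V\times L), \underline{E}]} [V\times L, \underline{E}]$ from $[\partial(V\times L), E] \times_{[\partial(V\times L), \underline{E}]} [V\times L', \underline{E}]$ to $[V\times L, E]$. Dually, applying \Cref{cor:stable-right-rep} to the RHS with $U = \partial V$, $V = V$, $E = [L,E]$, $B = [\partial L, E] \times_{[\partial L, \underline{E}]} [L, \underline{E}]$, and $B' = [\partial L, E] \times_{[\partial L, \underline{E}]} [L', \underline{E}]$ rewrites that set as the analogous hom-set in a slice category of $\bC$.

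Next, I would use the currying isomorphisms $[\partial V, [L,E]] \cong [\partial V \times L, E]$ and $[V, [L,E]] \cong [V\times L, E]$ (and similarly with $E$ replaced by $\underline{E}$ and with $L$ replaced by $L'$) to unfold the RHS into iterated pullbacks. The top-right objects then match immediately: both are $[V\times L, E]$. For the base and the top-left objects, the pushout-product approximation structure \eqref{eqn:pushout-product-approx} is what makes the two expressions agree. Applied to $E$ and $\underline{E}$ and combined with the fact that $E \to \underline{E}$ preserves the approximation structure, it lets us rewrite $[\partial(V\times L), E] \to [\partial(V\times L), \underline{E}]$ as the map between the two relevant binary pullbacks. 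Rearranging the resulting limit diagram over $[\partial V\times\partial L, E]$, $[\partial V\times\partial L, \underline{E}]$, $[\partial V\times L, \underline{E}]$, and $[V\times\partial L, \underline{E}]$ exhibits the RHS base as the LHS base; the same argument with the last $\underline{E}$-factor replaced by $[V\times L', \underline{E}]$ handles the top-left objects.

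Finally, I would verify that the isomorphisms just constructed commute with the two structural maps into the base object, so that they lift to an isomorphism of the two slice data, hence of the two hom-sets. The compatibilities reduce to naturality of currying and the defining commutativity of \eqref{eqn:pushout-product-approx}.

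The main obstacle is the bookkeeping at the level of the base and top-left objects: on the RHS these appear as iterated pullbacks obtained by combining the pullback power structure $B \to [\partial L, E] \times_{[\partial L, \underline{E}]} [L, \underline{E}]$ with a further pullback along $\partial V \to V$, and identifying them with the LHS requires carefully reorganizing a six- or seven-term diagram into the binary pullback form $[\partial(V\times L), E] \times_{[\partial(V\times L), \underline{E}]} [V\times L, \underline{E}]$. Once this identification is in place, however, the rest of the argument is formal.
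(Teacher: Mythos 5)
Your proposal is correct and follows essentially the same route as the paper: internalize both sides via the representability results, then identify the two representing cospans using currying, the defining commutativity of \eqref{eqn:pushout-product-approx}, and the fact that limits commute with limits, checking compatibility with the structural maps into the base so that the identification descends to the slice hom-sets. The paper carries out your ``bookkeeping'' step by exhibiting both cospans as limits of one common $3\times 3$-shaped diagram in $\bC^{\bullet \to \bullet \leftarrow \bullet}$, which is exactly the reorganization you describe.
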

\begin{proof}
  We set
  $\underline{\partial}_L\set{L',E} \to \underline{\partial}_L\set{L,E}
  \leftarrow [L,E]$ as the span
  $[\partial L, E] \times_{[\partial L, \underline{E}]} [L',E] \to [\partial L,
  E] \times_{[\partial L, \underline{E}]} [L,E] \leftarrow [L,E]$ on the right
  above.
  Then, by \Cref{lem:stable-left-rep,lem:stable-right-rep}, it suffices to show
  that we have the horizontal isomorphisms as below.
  \begin{equation*}\label{eqn:ltrans}\tag{\textsc{\small ltrans}}\scriptsize
    \begin{tikzcd}[cramped, column sep=tiny]
      {\textcolor{red0}{[\partial(V \times L), E] \times_{[\partial(V \times L), \underline{E}]} [V \times L', \underline{E}]}} && {\textcolor{red0}{[\partial V, [L,E]] \times_{[\partial V, \underline{\partial}_L\set{L,E}]} [V, \underline{\partial}_L\set{L',E}]}} \\
      & {\textcolor{yellow0}{[V \times L, E]}} && {\textcolor{yellow0}{[V, [L,E]]}} \\
      {\textcolor{darkblue0}{[\partial(V \times L), E] \times_{[\partial(V \times L), \underline{E}]} [V \times L, \underline{E}]}} && {\textcolor{darkblue0}{[\partial V, [L,E]] \times_{[\partial V, \underline{\partial}_L\set{L,E}]} [V, \underline{\partial}_L\set{L,E}]}}
      \arrow["\cong"{description}, color={red0}, tail reversed, from=1-1, to=1-3]
      \arrow[color={red0}, from=1-1, to=3-1]
      \arrow[color={red0}, from=1-3, to=3-3]
      \arrow[color={yellow0}, from=2-2, to=3-1]
      \arrow[color={yellow0}, from=2-4, to=3-3]
      \arrow["\cong"{description}, color={darkblue0}, tail reversed, from=3-1, to=3-3]
      \arrow["\cong"{description, pos=0.3}, color={yellow0}, tail reversed, from=2-2, to=2-4, crossing over]
    \end{tikzcd}
  \end{equation*}

  By the assumption that $\partial(V \times L) \to V \times L$ approximates
  $(\partial V \to V) \ltimes (\partial L \to L)$ relative to
  $E \to \underline{E}$, in the diagram below, the limit of each row is isomorphic
  to the object on the right of the corresponding row; and the maps between the
  objects on the right are induced by the maps between their respective rows.
  \begin{equation*}
    \scriptsize
    \begin{tikzcd}[cramped, column sep=tiny, row sep=small]
      {\textcolor{red0}{\bullet}} && {\textcolor{red0}{\bullet}} && {\textcolor{red0}{\bullet}} && {\textcolor{red0}{\bullet}} \\
      \\
      {\textcolor{darkblue0}{[\partial V \times L, E]}} && {\textcolor{darkblue0}{[\partial V \times \partial L, E]}} && {\textcolor{darkblue0}{[V \times \partial L, E]}} && {\textcolor{darkblue0}{[\partial(V \times L),E]}} \\
      & {\textcolor{red0}{\bullet}} && {\textcolor{red0}{\bullet}} && {\textcolor{red0}{\bullet}} && {\textcolor{red0}{\bullet}} \\
      \\
      & {\textcolor{darkblue0}{[\partial V \times L, \underline{E}]}} && {\textcolor{darkblue0}{[\partial V \times \partial L, \underline{E}]}} && {\textcolor{darkblue0}{[V \times \partial L, \underline{E}]}} && {\textcolor{darkblue0}{[\partial(V \times L),\underline{E}]}} \\
      && {\textcolor{red0}{\bullet}} && {\textcolor{red0}{\bullet}} && {\textcolor{red0}{[V \times L', E]}} && {\textcolor{red0}{[V \times L', E]}} \\
      &&& {\textcolor{yellow0}{[V \times L, E]}} &&&& {\textcolor{yellow0}{[V \times L, E]}} \\
      && {\textcolor{darkblue0}{[\partial V \times L, \underline{E}]}} && {\textcolor{darkblue0}{[\partial V \times L, \underline{E}]}} && {\textcolor{darkblue0}{[V \times L, \underline{E}]}} && {\textcolor{darkblue0}{[V \times L, \underline{E}]}}
      \arrow[color={yellow0}, from=8-4, to=3-1]
      \arrow[color={yellow0}, from=8-4, to=3-3]
      \arrow[color={yellow0}, from=8-4, to=3-5]
      \arrow[color={yellow0}, from=8-4, to=9-3]
      \arrow[color={yellow0}, from=8-4, to=9-5]
      \arrow[color={yellow0}, from=8-4, to=9-7]
      \arrow[color={yellow0}, from=8-8, to=3-7]
      \arrow[color={yellow0}, from=8-8, to=9-9]
      \arrow[color={darkblue0}, crossing over, from=3-1, to=3-3]
      \arrow[color={darkblue0}, crossing over, from=3-1, to=6-2]
      \arrow[color={darkblue0}, crossing over, from=3-3, to=6-4]
      \arrow[color={darkblue0}, crossing over, from=3-5, to=3-3]
      \arrow[color={darkblue0}, crossing over, from=3-5, to=6-6]
      \arrow[color={darkblue0}, crossing over, from=3-7, to=6-8]
      \arrow[color={darkblue0}, crossing over, from=6-2, to=6-4]
      \arrow[color={darkblue0}, crossing over, from=6-6, to=6-4]
      \arrow[color={darkblue0}, crossing over, Rightarrow, no head, from=9-3, to=6-2]
      \arrow[color={darkblue0}, crossing over, from=9-5, to=6-4]
      \arrow[color={darkblue0}, crossing over, from=9-7, to=6-6]
      \arrow[color={darkblue0}, crossing over, from=9-9, to=6-8]
      \arrow[color={darkblue0}, crossing over, Rightarrow, no head, from=9-3, to=9-5]
      \arrow[color={darkblue0}, crossing over, from=9-7, to=9-5]
      \arrow[color={red0}, crossing over, from=1-1, to=1-3]
      \arrow[color={red0}, crossing over, Rightarrow, no head, from=1-1, to=3-1]
      \arrow[color={red0}, crossing over, from=1-1, to=4-2]
      \arrow[color={red0}, crossing over, Rightarrow, no head, from=1-3, to=3-3]
      \arrow[color={red0}, crossing over, from=1-3, to=4-4]
      \arrow[color={red0}, crossing over, from=1-5, to=1-3]
      \arrow[color={red0}, crossing over, Rightarrow, no head, from=1-5, to=3-5]
      \arrow[color={red0}, crossing over, from=1-5, to=4-6]
      \arrow[color={red0}, crossing over, Rightarrow, no head, from=1-7, to=3-7]
      \arrow[color={red0}, crossing over, from=1-7, to=4-8]
      \arrow[color={red0}, crossing over, from=4-2, to=4-4]
      \arrow[color={red0}, crossing over, Rightarrow, no head, from=4-2, to=6-2]
      \arrow[color={red0}, crossing over, Rightarrow, no head, from=4-4, to=6-4]
      \arrow[color={red0}, crossing over, from=4-6, to=4-4]
      \arrow[color={red0}, crossing over, Rightarrow, no head, from=4-6, to=6-6]
      \arrow[color={red0}, crossing over, Rightarrow, no head, from=4-8, to=6-8]
      \arrow[color={red0}, crossing over, from=7-3, to=4-2]
      \arrow[color={red0}, crossing over, Rightarrow, no head, from=7-3, to=7-5]
      \arrow[color={red0}, crossing over, Rightarrow, no head, from=7-3, to=9-3]
      \arrow[color={red0}, crossing over, from=7-5, to=4-4]
      \arrow[color={red0}, crossing over, Rightarrow, no head, from=7-5, to=9-5]
      \arrow[color={red0}, crossing over, from=7-7, to=4-6]
      \arrow[color={red0}, crossing over, from=7-7, to=7-5]
      \arrow[color={red0}, crossing over, from=7-7, to=9-7]
      \arrow[color={red0}, crossing over, from=7-9, to=4-8]
      \arrow[color={red0}, crossing over, from=7-9, to=9-9]
    \end{tikzcd}
  \end{equation*}

  Applying the exponential transpose to each object of the diagram above, one
  observes that the above diagram is isomorphic to the following diagram.
  \begin{equation*}\scriptsize
    \begin{tikzcd}[cramped, column sep=tiny, row sep=small]
      {\textcolor{red0}{\bullet}} && {\textcolor{red0}{\bullet}} && {\textcolor{red0}{\bullet}} \\
      \\
      {\textcolor{darkblue0}{[\partial V, [L, E]]}} && {\textcolor{darkblue0}{[\partial V, [\partial L, E]]}} && {\textcolor{darkblue0}{[V, [\partial L, E]]}} \\
      & {\textcolor{red0}{\bullet}} && {\textcolor{red0}{\bullet}} && {\textcolor{red0}{\bullet}} \\
      \\
      & {\textcolor{darkblue0}{[\partial V, [L, \underline{E}]]}} && {\textcolor{darkblue0}{[\partial V, [\partial L, \underline{E}]]}} && {\textcolor{darkblue0}{[V, [\partial L, \underline{E}]]}} \\
      && {\textcolor{red0}{\bullet}} && {\textcolor{red0}{\bullet}} && {\textcolor{red0}{[V, [L', E]]}} \\
      &&& {\textcolor{yellow0}{[V, \set{L, E}]}} \\
      && {\textcolor{darkblue0}{[\partial V, [L, \underline{E}]]}} && {\textcolor{darkblue0}{[\partial V, [L,\underline{E}]]}} && {\textcolor{darkblue0}{[V, [L, E]]}} \\
      &&& {\textcolor{red0}{\bullet}} && {\textcolor{red0}{\bullet}} && {\textcolor{red0}{[V, \underline{\partial}_L\set{L',E}]}} \\
      \\
      &&& {\textcolor{darkblue0}{[\partial V, [L, E]]}} && {\textcolor{darkblue0}{[\partial V, \underline{\partial}_L\set{L,E}]}} && {\textcolor{darkblue0}{[V, \underline{\partial}_L\set{L,E}]}} \\
      &&&& {\textcolor{yellow0}{[V, [L, E]]}}
      \arrow[draw={yellow0}, crossing over, from=8-4, to=3-1]
      \arrow[draw={yellow0}, crossing over, from=8-4, to=3-3]
      \arrow[draw={yellow0}, crossing over, from=8-4, to=3-5]
      \arrow[draw={yellow0}, crossing over, from=8-4, to=9-3]
      \arrow[draw={yellow0}, crossing over, from=8-4, to=9-5]
      \arrow[draw={yellow0}, crossing over, from=8-4, to=9-7]
      \arrow[draw={yellow0}, crossing over, from=13-5, to=12-4]
      \arrow[draw={yellow0}, crossing over, from=13-5, to=12-8]
      \arrow[draw={darkblue0},  crossing over, from=3-1, to=3-3]
      \arrow[draw={darkblue0},  crossing over, from=3-1, to=6-2]
      \arrow[draw={darkblue0},  crossing over, from=3-3, to=6-4]
      \arrow[draw={darkblue0},  crossing over, from=3-5, to=3-3]
      \arrow[draw={darkblue0},  crossing over, from=3-5, to=6-6]
      \arrow[draw={darkblue0},  crossing over, from=6-2, to=6-4]
      \arrow[draw={darkblue0},  crossing over, from=6-6, to=6-4]
      \arrow[draw={darkblue0},  crossing over, Rightarrow, no head, from=9-3, to=6-2]
      \arrow[color={darkblue0}, crossing over, Rightarrow, no head, from=9-3, to=9-5]
      \arrow[draw={darkblue0},  crossing over, from=9-5, to=6-4]
      \arrow[draw={darkblue0},  crossing over, from=9-7, to=6-6]
      \arrow[color={darkblue0}, crossing over, from=9-7, to=9-5]
      \arrow[draw={darkblue0},  crossing over, from=12-4, to=12-6]
      \arrow[draw={darkblue0},  crossing over, from=12-8, to=12-6]
      \arrow[draw={red0}, crossing over, from=1-1, to=1-3]
      \arrow[draw={red0}, crossing over, Rightarrow, no head, from=1-1, to=3-1]
      \arrow[draw={red0}, crossing over, from=1-1, to=4-2]
      \arrow[draw={red0}, crossing over, Rightarrow, no head, from=1-3, to=3-3]
      \arrow[draw={red0}, crossing over, from=1-3, to=4-4]
      \arrow[draw={red0}, crossing over, from=1-5, to=1-3]
      \arrow[draw={red0}, crossing over, Rightarrow, no head, from=1-5, to=3-5]
      \arrow[draw={red0}, crossing over, from=1-5, to=4-6]
      \arrow[draw={red0}, crossing over, from=4-2, to=4-4]
      \arrow[draw={red0}, crossing over, Rightarrow, no head, from=4-2, to=6-2]
      \arrow[draw={red0}, crossing over, Rightarrow, no head, from=4-4, to=6-4]
      \arrow[draw={red0}, crossing over, from=4-6, to=4-4]
      \arrow[draw={red0}, crossing over, Rightarrow, no head, from=4-6, to=6-6]
      \arrow[draw={red0}, crossing over, from=7-3, to=4-2]
      \arrow[draw={red0}, crossing over, Rightarrow, no head, from=7-3, to=7-5]
      \arrow[draw={red0}, crossing over, Rightarrow, no head, from=7-3, to=9-3]
      \arrow[draw={red0}, crossing over, from=7-5, to=4-4]
      \arrow[draw={red0}, crossing over, Rightarrow, no head, from=7-5, to=9-5]
      \arrow[draw={red0}, crossing over, from=7-7, to=4-6]
      \arrow[draw={red0}, crossing over, from=7-7, to=7-5]
      \arrow[draw={red0}, crossing over, from=7-7, to=9-7]
      \arrow[draw={red0}, crossing over, from=10-4, to=10-6]
      \arrow[draw={red0}, crossing over, Rightarrow, no head, from=10-4, to=12-4]
      \arrow[draw={red0}, crossing over, Rightarrow, no head, from=10-6, to=12-6]
      \arrow[draw={red0}, crossing over, from=10-8, to=10-6]
      \arrow[draw={red0}, crossing over, from=10-8, to=12-8]
    \end{tikzcd}
  \end{equation*}
  But the internal-Hom functor is continuous, so one observes that the limit of
  each column is isomorphic to the object on the front of the corresponding
  column; and the maps between the objects on the front are induced by the maps
  between their respective columns.

  The result then follows because the limits of the pink, blue and orange
  objects of the previous two diagrams are exactly the objects of the
  corresponding colour in \Cref{eqn:ltrans}.
  In other words, the result follows because when viewed as objects of
  $\bC^{\bullet \to \bullet \leftarrow \bullet}$ two pink-blue-orange cospans on
  the left and right of \Cref{eqn:ltrans} are both the limits of the same
  $\left(\begin{tikzcd}[cramped, row sep=small, column sep=small]
      \bullet \ar[r] \ar[d] & \bullet \ar[d] & \bullet \ar[l] \ar[d] \\
      \bullet \ar[r]        & \bullet        & \bullet \ar[l]        \\
      \bullet \ar[r] \ar[u] & \bullet \ar[u] & \bullet \ar[l] \ar[u]
    \end{tikzcd}\right)$-shaped diagram in
  $\bC^{\bullet \to \bullet \leftarrow \bullet}$.
\end{proof}

The above proof gives a procedure for transposing structure lifts.
\begin{construction}\label{constr:struct-ltrans}
  Using representability and examining the use of the exponential transpose in
  the proof of \Cref{thm:struct-ltrans}, we see that given a structured lift
  $F \in {\scriptsize\begin{tikzcd}[cramped, column sep=small, row sep=small]
      \partial(V \times L) \ar[d] \\ V \times L \ar[r] & V \times L'
  \end{tikzcd}}
  \squareslash
  {\scriptsize\begin{tikzcd}[cramped, column sep=small, row sep=small]
    E \ar[d] \\ \underline{E}
  \end{tikzcd}}$
  its transposed pointwise structured lift
  $F^\sharp \in
  {\scriptsize\begin{tikzcd}[cramped, column sep=small, row sep=small]
    \partial V \ar[d] \\ V
  \end{tikzcd}}
  \squareslash
  {\scriptsize\begin{tikzcd}[cramped, column sep=small, row sep=small]
    & {[L,E]} \ar[d] \\
    {[\partial L, E] \times_{[\partial L, \underline{E}]} [L',E]} \ar[r] &
    {[\partial L, E] \times_{[\partial L, \underline{E}]} [L,E]}
  \end{tikzcd}}$
  solves a lifting problem
  \begin{equation*}
    \begin{tikzcd}
      X \times \partial V \ar[rr, "{f}", dashed] \ar[d] && {[L,E]} \ar[d] \\
      X \times V \ar[r, "{(g,h)}"', dashed] \ar[rru, "{F_X^\sharp(f,(g,h))}"{description}, dashed]&
      {[\partial L, E] \times_{[\partial L, \underline{E}]} [L',\underline{E}]} \ar[r] &
      {[\partial L, E] \times_{[\partial L, \underline{E}]} [L,\underline{E}]}
    \end{tikzcd}
  \end{equation*}
  by the following procedure:
  \begin{enumerate}
    \item Take the exponential transposes
    $f^\dagger \colon X \times \partial V \times L \to E$ and
    $g^\dagger \colon X \times V \times \partial L \to E$ and
    $h^\dagger \colon X \times V \times L' \to E$
    \item Because $E$ believes $X \times \partial(V \times L)$ is the
    product-boundary
    $X \times \partial V \times L \cup X \times V \times \partial L$, one has a
    common extension
    $\vbr{f^\dagger,g^\dagger} \colon X \times \partial(V \times L) \to E$ of
    $f^\dagger$ and $g^\dagger$.
    \item By commutativity of the original lifting problem and the fact that
    $\underline{E}$ also thinks $X \times \partial(V \times L)$ is
    $X \times \partial V \times L \cup X \times V \times \partial L$, we can
    form the transposed lifting problem, whose commutativity stems from the fact
    that $E \to \ul{E}$ preserves the belief of product-boundary approximation.
    \begin{equation*}
      \begin{tikzcd}
        X \times \partial(V \times L)
        \ar[rr, dashed, "{\vbr{f^\dagger,g^\dagger}}"]
        \ar[d]
        &&
        E \ar[d]
        \\
        X \times V \times L
        \ar[r]
        \ar[rru, dashed, "{F_X(\vbr{f^\dagger,g^\dagger}, h^\dagger)}"{description}]
        &
        X \times V \times L'
        \ar[r, "h^\dagger"', dashed]
        &
        \underline{E}
      \end{tikzcd}
    \end{equation*}
    \item $F$ provides a solution
    $F_X(\vbr{f^\dagger,g^\dagger}, h^\dagger) \colon X \times V \times L \to E$
    to this lifting problem, whose exponential transpose is taken to be the
    solution to the original lifting problem.
    \begin{equation*}
      F_X^\sharp(f,(g,h)) \coloneqq F_X(\vbr{f^\dagger,g^\dagger},h^\dagger)^\dagger
    \end{equation*}
  \end{enumerate}
\end{construction}

We then next try to use \Cref{constr:struct-lift-right-pb} to get rid of the
restriction on the right of \Cref{thm:struct-ltrans}.
For this, we need to first compute the following pullback.
\begin{lemma}\label{lem:pullback-hom-pb}
  Let there be maps $\partial L \to L \to L'$ and $E \to \underline{E}$ where
  $\partial L, L, L'$ are exponentiable objects.
  One has the following pullback
  \begin{equation*}
    \begin{tikzcd}[cramped, column sep=small]
      {[L,E] \times_{[L,\underline{E}]} [L',\underline{E}]}
      \ar[r] \ar[d] \ar[rd, draw=none, "{\lrcorner}"{pos=0.1,scale=1.5}]
      & {[L,E]} \ar[d] \\
      {[\partial L, E] \times_{[\partial L, \underline{E}]} [L',\underline{E}]} \ar[r] &
      {[\partial L, E] \times_{[\partial L, \underline{E}]} [L,\underline{E}]}
    \end{tikzcd}
  \end{equation*}
\end{lemma}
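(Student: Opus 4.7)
The plan is to deduce the pullback property from the pasting lemma, by extending the asserted square downwards by one row so that the resulting lower square is straightforwardly a pullback and the outer rectangle is the defining pullback of $[L,E] \times_{[L,\underline{E}]} [L',\underline{E}]$.

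First, I would set up the comparison map $[L,E] \times_{[L,\underline{E}]} [L',\underline{E}] \to [\partial L, E] \times_{[\partial L, \underline{E}]} [L',\underline{E}]$ via restriction along $\partial L \to L$ on the first factor and the identity on the second; well-definedness uses the defining compatibility of the source pullback together with the factorisation $\partial L \to L \to L'$. With the comparison map $[L,E] \to [\partial L, E] \times_{[\partial L, \underline{E}]} [L,\underline{E}]$ constructed analogously from restriction and post-composition with $E \to \underline{E}$, commutativity of the asserted square is immediate.

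Next, I would paste underneath the asserted square the auxiliary square whose top row is the bottom row of the asserted square, whose bottom row is the map $[L',\underline{E}] \to [L,\underline{E}]$ induced by $L \to L'$, and whose vertical maps are the second projections of the respective pullbacks. A direct check shows this auxiliary square is a pullback: a compatible cone is a pair $(h \colon L' \to \underline{E}, (g \colon \partial L \to E, k \colon L \to \underline{E}))$ with $k = h \circ (L \to L')$, and the unique lift is $(g, h)$ itself; its required compatibility in $[\partial L, \underline{E}]$ follows from that of $(g,k)$ via $\partial L \to L \to L'$.

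The outer rectangle of the pasted diagram is then exactly the defining pullback of $[L,E] \times_{[L,\underline{E}]} [L',\underline{E}]$, with right leg post-composition with $E \to \underline{E}$ and bottom leg pre-composition with $L \to L'$. By the pasting lemma for pullbacks, the asserted top square is itself a pullback. The main obstacle is not conceptual but notational: coherently identifying the various induced maps into $[L,\underline{E}]$, $[\partial L, E]$, and $[\partial L, \underline{E}]$ so that the pasted rectangle actually commutes on the nose; once this bookkeeping is pinned down, a single application of the pasting lemma concludes.
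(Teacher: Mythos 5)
Your proof is correct, but it takes a different route from the paper. The paper exhibits each of the four corners of the square as the limit of a cospan (e.g.\ $[L,E]\times_{[L,\underline{E}]}[L',\underline{E}]$ as the limit of $[L,E]\to[L,\underline{E}]\leftarrow[L',\underline{E}]$, and $[L,E]$ as the limit of the degenerate cospan $[L,E]\to[L,\underline{E}]=[L,\underline{E}]$), observes that the three componentwise squares between corresponding cospan entries are pullbacks for trivial reasons (each has a pair of parallel identity edges, or is the naming square for precomposition along $L\to L'$), and concludes by interchange of limits that the square of limits is a pullback. You instead paste the asserted square on top of the second-projection square down to $[L',\underline{E}]\to[L,\underline{E}]$, verify by hand that this auxiliary square is a pullback and that the outer rectangle is the defining pullback of $[L,E]\times_{[L,\underline{E}]}[L',\underline{E}]$, and invoke the two-pullback pasting lemma. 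Both arguments are sound; your checks of the auxiliary square (the compatibility of $(g,h)$ in $[\partial L,\underline{E}]$ coming from that of $(g,k)$ via $\partial L\to L\to L'$) and of the commutativity of the asserted square (which is exactly the defining compatibility of the source pullback) are the right ones. Your approach is more elementary and self-contained, at the cost of the explicit bookkeeping you flag; the paper's interchange-of-limits argument is more uniform and is the same template it reuses elsewhere (e.g.\ in \cref{thm:struct-ltrans} and \cref{thm:fibred-path-fibration}), which is presumably why it is phrased that way.
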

\begin{proof}
  The objects in the above diagram are the limits of the cospans as follows
  while the maps above are the maps between the limits induced by the maps of
  the respective cospans.
  The result then follows by the fact that limits commute with limits.
  \begin{equation*}\scriptsize
    \begin{tikzcd}[column sep=small, row sep=small]
      & {[L,E]} && {[L,\underline{E}]} && {[L,\underline{E}]} && {[L,E]} \\
      {[L, E]} && {[L, \underline{E}]} && {[L', \underline{E}]} && {[L,E] \times_{[L,\underline{E}]} [L',\underline{E}]} \\
      & {[\partial L, E]} && {[\partial L, \underline{E}]} && {[L, \underline{E}]} && {[\partial L, E] \times_{[\partial L, \underline{E}]} [L,\underline{E}]} \\
      {[\partial L, E]} && {[\partial L, \underline{E}]} && {[L', \underline{E}]} && {[\partial L, E] \times_{[\partial L, \underline{E}]} [L',\underline{E}]}
      \arrow[from=1-2, to=1-4]
      \arrow[from=1-2, to=3-2]
      \arrow[from=1-4, to=3-4]
      \arrow[Rightarrow, no head, from=1-6, to=1-4]
      \arrow[Rightarrow, no head, from=1-6, to=3-6]
      \arrow[from=1-8, to=3-8]
      \arrow[Rightarrow, no head, from=2-1, to=1-2]
      \arrow[from=2-1, to=4-1]
      \arrow[Rightarrow, no head, from=2-3, to=1-4]
      \arrow[from=2-5, to=1-6]
      \arrow[from=2-7, to=1-8]
      \arrow[from=2-7, to=4-7]
      \arrow[from=3-2, to=3-4]
      \arrow[from=3-6, to=3-4]
      \arrow[Rightarrow, no head, from=4-1, to=3-2]
      \arrow[from=4-1, to=4-3]
      \arrow[Rightarrow, no head, from=4-3, to=3-4]
      \arrow[from=4-5, to=3-6]
      \arrow[from=4-5, to=4-3]
      \arrow[from=4-7, to=3-8]
      \arrow[crossing over, from=2-1, to=2-3]
      \arrow[crossing over, from=2-3, to=4-3]
      \arrow[crossing over, from=2-5, to=2-3]
      \arrow[crossing over, Rightarrow, no head, from=2-5, to=4-5]
      \arrow["\lrcorner"{anchor=center, pos=0.05, scale=1.5, rotate=45}, draw=none, from=2-1, to=3-2]
      \arrow["\lrcorner"{anchor=center, pos=0.05, scale=1.5, rotate=45}, draw=none, from=2-3, to=3-4]
      \arrow["\lrcorner"{anchor=center, pos=0.05, scale=1.5, rotate=45}, draw=none, from=2-5, to=3-6]
      \arrow["\lrcorner"{anchor=center, pos=0.05, scale=1.5, rotate=45}, draw=none, from=2-7, to=3-8]
    \end{tikzcd}
  \end{equation*}
\end{proof}

\begin{proposition}\label{prop:struct-ltrans*}
  Let there be maps $\partial V \to V$ and $\partial L \to L$ and
  $E \to \underline{E}$ and $L \to L'$ where $\partial V, V, \partial L, L, L'$
  are exponentiable objects.

  Suppose that $\partial(V \times L) \to V \times L$ structurally approximates
  $(\partial V \to V) \ltimes (\partial L \to L)$ relative to
  $E \to \underline{E}$.
  Then, we have a bijection
  \begin{equation*}
    \left(
      \begin{tikzcd}[cramped, column sep=small]
        \partial(V \times L) \ar[d] \\ V \times L \ar[r] & V \times L'
      \end{tikzcd}
      \squareslash
      \begin{tikzcd}[cramped, column sep=small]
        E \ar[d] \\ \underline{E}
      \end{tikzcd}
    \right)
    \cong
    \left(
      \begin{tikzcd}[cramped, column sep=small]
        \partial V \ar[d] \\ V
      \end{tikzcd}
      \squareslash
      \begin{tikzcd}[cramped, column sep=small]
        {[L,E] \times_{[L,\underline{E}]} [L',\underline{E}]} \ar[d] \\
        {[\partial L, E] \times_{[\partial L, \underline{E}]} [L',\underline{E}]}
      \end{tikzcd}
    \right)
  \end{equation*}
\end{proposition}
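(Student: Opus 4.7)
The plan is to deduce this proposition by chaining together three earlier results, so that essentially no new calculation is required.

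First, I would apply \Cref{thm:struct-ltrans} directly to the left-hand side. Because $\partial(V \times L) \to V \times L$ structurally approximates $(\partial V \to V) \ltimes (\partial L \to L)$ relative to $E \to \underline{E}$, \Cref{thm:struct-ltrans} yields a bijection between the LHS and the set of right-restricted lifting structures
\begin{equation*}
  \left(
    \begin{tikzcd}[cramped, column sep=small]
      \partial V \ar[d] \\ V
    \end{tikzcd}
    \squareslash
    \begin{tikzcd}[cramped, column sep=small]
      & {[L,E]} \ar[d] \\
      {[\partial L, E] \times_{[\partial L, \underline{E}]} [L',\underline{E}]} \ar[r] &
      {[\partial L, E] \times_{[\partial L, \underline{E}]} [L,\underline{E}]}
    \end{tikzcd}
  \right).
\end{equation*}

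Next, I would invoke \Cref{constr:struct-lift-right-pb}, which converts any right-restricted lifting structure along a map $B' \to B$ into an (unrestricted) lifting structure against the pullback of the right map along $B' \to B$. Applied here, this means the right-restricted structure above is in bijection with an unrestricted lifting structure of $\partial V \to V$ against the pullback of $[L,E] \to [\partial L, E] \times_{[\partial L, \underline{E}]} [L,\underline{E}]$ along $[\partial L, E] \times_{[\partial L, \underline{E}]} [L',\underline{E}] \to [\partial L, E] \times_{[\partial L, \underline{E}]} [L,\underline{E}]$.

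Finally, \Cref{lem:pullback-hom-pb} identifies this pullback explicitly: it is
\begin{equation*}
  {[L,E] \times_{[L,\underline{E}]} [L',\underline{E}]} \longrightarrow {[\partial L, E] \times_{[\partial L, \underline{E}]} [L',\underline{E}]},
\end{equation*}
which is exactly the right-hand map appearing in the statement. Composing the three bijections gives the desired result. There is no real obstacle here beyond bookkeeping, since all three inputs are already in place; the only thing to double-check is that the pullback cospan invoked by \Cref{constr:struct-lift-right-pb} matches the one computed in \Cref{lem:pullback-hom-pb} on the nose, which is immediate from inspection of the two diagrams.
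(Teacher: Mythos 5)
Your proposal is correct and is exactly the paper's argument: the paper's proof consists of citing \Cref{thm:struct-ltrans}, \Cref{constr:struct-lift-right-pb}, and \Cref{lem:pullback-hom-pb} in precisely the order and roles you describe. Your write-up just spells out the bookkeeping that the paper leaves implicit.
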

\begin{proof}
  Apply
  \Cref{thm:struct-ltrans,constr:struct-lift-right-pb,lem:pullback-hom-pb}.
\end{proof}

\begin{corollary}\label{cor:struct-ltrans*-local}
  Let there be maps $E \to \ul{E} \in \bC$ and $\partial L \to L \to L' \in \bC$
  where $\partial L, L, L'$ are exponentiable.

  Fix an object $C \in \bC$ along with a map
  $\partial V \to V \in \sfrac{\bC}{C}$ and assume that
  $\partial(L \times V) \to L \times V \in \sfrac{\bC}{C}$ approximates
  $(\partial V \to V) \ltimes_C (\partial L \times C \to L \times C) \in
  \sfrac{\bC}{C}$ relative to $E \times C \to \ul{E} \times C \in \sfrac{\bC}{C}$.
  Then,
  \begin{equation*}
    \left(
      \begin{tikzcd}[cramped, column sep=small]
        \partial(V \times L) \ar[d] \\ V \times L \ar[r] & V \times L'
      \end{tikzcd}
      \fracsquareslash{C}
      \begin{tikzcd}[cramped, column sep=small]
        E \times C \ar[d] \\ \underline{E} \times C
      \end{tikzcd}
    \right)
    \cong
    \left(
      \begin{tikzcd}[cramped, column sep=small]
        \partial V \ar[d] \\ V
      \end{tikzcd}
      \fracsquareslash{C}
      \begin{tikzcd}[cramped, column sep=small]
        {([L,E] \times_{[L,\underline{E}]} [L',\underline{E}]) \times C} \ar[d] \\
        {([\partial L, E] \times_{[\partial L, \underline{E}]} [L',\underline{E}]) \times C}
      \end{tikzcd}
    \right)
  \end{equation*}
\end{corollary}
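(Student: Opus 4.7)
The plan is to obtain this as a direct consequence of \Cref{prop:struct-ltrans*} applied internally in the slice category $\sfrac{\bC}{C}$, followed by identification of the relevant internal-Homs. Since $\bC$ is locally cartesian closed, so is $\sfrac{\bC}{C}$, meaning that \Cref{prop:struct-ltrans*} applies verbatim with $\bC$ replaced by $\sfrac{\bC}{C}$.

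First, I verify that the hypotheses of \Cref{prop:struct-ltrans*} are met in $\sfrac{\bC}{C}$. The map $\partial V \to V$ lives in $\sfrac{\bC}{C}$ by assumption, while $\partial L \to L \to L'$ and $E \to \underline{E}$ from $\bC$ are pulled back along $C \to 1$ to the maps $\partial L \times C \to L \times C \to L' \times C$ and $E \times C \to \underline{E} \times C$ in $\sfrac{\bC}{C}$. The objects $\partial L \times C, L \times C, L' \times C$ are exponentiable in $\sfrac{\bC}{C}$ because exponentiable objects are stable under pullback in a locally cartesian closed category. The approximation hypothesis is exactly what is assumed on $\partial(L \times V) \to L \times V$ in the slice. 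Applying \Cref{prop:struct-ltrans*} in $\sfrac{\bC}{C}$ therefore yields a bijection with the right-hand side expressed in terms of the slice internal-Hom $[-,-]_C$.

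The remaining step is to identify these slice internal-Homs with products. For any exponentiable $X \in \bC$ and any $Y \in \bC$, one has a natural isomorphism $[X \times C, Y \times C]_C \cong [X, Y] \times C$ in $\sfrac{\bC}{C}$, as follows from the chain of adjunctions
\begin{align*}
  \sfrac{\bC}{C}(Z \to C,\, [X \times C, Y \times C]_C)
  &\cong \sfrac{\bC}{C}(Z \times_C (X \times C),\, Y \times C) \\
  &\cong \bC(Z \times X,\, Y) \\
  &\cong \bC(Z,\, [X,Y]) \\
  &\cong \sfrac{\bC}{C}(Z \to C,\, [X,Y] \times C).
\end{align*}
Moreover, this identification is natural in the arguments and preserves the pullbacks appearing in \Cref{prop:struct-ltrans*}, since both $[-,-] \times C$ and $[- \times C, - \times C]_C$ are continuous in each slot. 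Thus $[L \times C, E \times C]_C \times_{[L \times C, \underline{E} \times C]_C} [L' \times C, \underline{E} \times C]_C$ is identified with $([L,E] \times_{[L,\underline{E}]} [L',\underline{E}]) \times C$, and similarly for the domain term with $\partial L$ in place of $L$.

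The main (essentially bookkeeping) obstacle is checking that the codomain of the resulting map in the slice matches the cospan in the statement: one must verify the identification of $[\partial L \times C, E \times C]_C \times_{[\partial L \times C, \underline{E} \times C]_C} [L \times C, \underline{E} \times C]_C$ with $([\partial L, E] \times_{[\partial L, \underline{E}]} [L, \underline{E}]) \times C$, and use \Cref{lem:pullback-hom-pb} interpreted in $\sfrac{\bC}{C}$ (whose proof is formally identical in a locally cartesian closed setting) to recognise the pullback base as the one prescribed in the statement. Combining these identifications with the bijection supplied by \Cref{prop:struct-ltrans*} in $\sfrac{\bC}{C}$ gives the required bijection.
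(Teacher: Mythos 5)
Your proposal is correct and takes essentially the same route as the paper: apply \Cref{prop:struct-ltrans*} internally to the locally cartesian closed slice $\sfrac{\bC}{C}$, then identify the slice internal-Homs via $[X \times C, Y \times C]_C \cong [X,Y] \times C$ together with the fact that this identification commutes with the relevant fibre products. Your write-up simply makes explicit the adjunction calculation and hypothesis-checking that the paper compresses into ``limits commute with limits and pullbacks preserve exponentials.''
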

\begin{proof}
  By \Cref{prop:struct-ltrans*}  we have that
  \begin{equation*}
    \left(
      \begin{tikzcd}[cramped, column sep=small]
        \partial(V \times L) \ar[d] \\ V \times L \ar[r] & V \times L'
      \end{tikzcd}
      \fracsquareslash{C}
      \begin{tikzcd}[cramped, column sep=small]
        E \times C \ar[d] \\ \underline{E} \times C
      \end{tikzcd}
    \right)
    \cong
    \left(
      \begin{tikzcd}[cramped, column sep=small]
        \partial V \ar[d] \\ V
      \end{tikzcd}
      \fracsquareslash{C}
      \begin{tikzcd}[cramped, column sep=small]
        {[L \times C,E \times C]_C
          \times_{[L \times C,\underline{E} \times C]_C}
          [L' \times C,\underline{E} \times C]_C} \ar[d] \\
        {[\partial L \times C, E \times C]_C
          \times_{[\partial L \times C, \underline{E} \times C]_C}
          [L' \times C,\underline{E} \times C]_C}
      \end{tikzcd}
    \right)
  \end{equation*}
  The result then follows from the fact that limits commute with limits and
  pullbacks preserve exponentials.
\end{proof}

\subsection*{Structured Leibniz Transposes for Path Objects}
We now focus on a specific case of structured Leibniz transposes involving
interval and path objects.
In cofibrantly-generated model structures, one often makes use of an interval
object to generate the trivial cofibrations.
In particular, an interval object $I$ is some good cylinder object for the
terminal object and the trivial cofibrations are usually generated in such a way
that the pushout-product of the interval boundary inclusion
$\partial I \hookrightarrow I$ with a trivial cofibration $K \to L$ is remains a
trivial cofibration.
Then, by the Leibniz transpose operation, this allows one to construct fibred
path objects as the pullback-power.

In model categories, being a fibration requires the mere existence of lifting
solutions and one has access to colimiting concepts such as pushout-product.
However, in our setting, we wish to use the framework developed in
\Cref{sec:struct-lift} to enforce some sort of uniformity on the lifts, and we
wish to use the framework developed in \Cref{sec:pushout-product-approx} to
reduce dependency on colimiting concepts for applicability in type-theoretic
settings.

Therefore, to reproduce the path object construction via the Leibiniz transpose
in a structured manner amenable to type-theoretic applications, the idea is to
use \Cref{prop:struct-ltrans*} and the associated
\Cref{cor:struct-ltrans*-local} to phrase this structured version of the path
object construction by way of the Leibiniz transpose involving the interval
object.
The formal statement of this is expressed in \Cref{cor:struct-ltrans-path}.

However, to build up to this, we need to first axiomatise the fibred path object
constructed from an interval object.
\begin{definition}\label{def:I-path-obj}
  Let $\partial I \to I \in \bC$ be a map between exponentiable objects.
  For each $X$, we denote its \emph{$I$-path object} as the exponential
  $P_\bC^I(X) \coloneqq [I,X]$ and the \emph{coboundary of the $I$-path object} is
  the exponential $\partial P_\bC^I(X) \coloneqq [\partial I, X]$ so that there is a
  \emph{endpoint evaluation map
    $\ev_\partial \colon P_\bC^I(X) \to \partial P_\bC^I(X)$}.

  Given a map $E \to B \in \bC$, we also write
  $\hP_\bC^I(E \to B) \colon P_\bC^I(E) \to \partial P_\bC^I(E) \times_{\partial
    P_\bC^I(B) } P_\bC^I(B)$ for the pullback-Hom of $\partial I \to I$ with
  $E \to B$.

  For each $E \to B \in \sfrac{\bC}{C}$ and a map $\partial I \to I \in \bC$ we
  write $P_C^I(E)$ and $\partial P_C^I(E)$ and $\hP_C^I(E \to B)$ to mean the
  respective constructions $P_{\bC/C}^{I \times C}(E)$ and
  $\partial P^{I \times C}_{\bC/C}(E)$ and $\hP^{I \times C}_{\bC/C}(E \to B)$
  in the slice category.
\end{definition}

We observe that the local exponential used in defining $I$-path objects in
slices occur as pullbacks of $I$-path objects in the ambient category.
This follows from the following more general fact.
\begin{lemma}\label{lem:local-exp-pb}
  For any $E \to B \in \sfrac{\bC}{B}$, and an exponentiable $A \in \bC$, the
  local exponential $[A \times B, E]_B \cong B \times_{[A,B]} [A,E]$ is just the
  pullback-Hom of $A \to 1$ with $E \to B$ in $\bC$.
  \begin{equation*}
    \begin{tikzcd}[cramped]
      {[A \times B, E]_B} & {[A,E]} \\
      B & {[A,B]}
      \arrow[from=1-1, to=1-2]
      \arrow[from=1-1, to=2-1]
      \arrow["\lrcorner"{anchor=center, pos=0.15, scale=1.5}, draw=none, from=1-1, to=2-2]
      \arrow[from=1-2, to=2-2]
      \arrow[from=2-1, to=2-2]
    \end{tikzcd}
  \end{equation*}
\end{lemma}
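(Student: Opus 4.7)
The plan is to verify this by a representability argument, showing that both $[A \times B, E]_B$ and the pullback $B \times_{[A,B]} [A,E]$ corepresent the same functor on $\bC$. Because the claim is about an isomorphism of objects in $\bC$ (the total space of the pullback being the local exponential), it suffices by Yoneda to exhibit a natural bijection of Hom-sets out of an arbitrary $X \in \bC$.

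First I would unfold the left-hand side. A map $X \to [A \times B, E]_B$ in $\bC$ is the same as a pair consisting of a structure map $\sigma \colon X \to B$ together with a map $X \to [A \times B, E]_B$ over $B$, i.e., a map in $\sfrac{\bC}{B}$ from $(X,\sigma) \to B$ to $[A \times B, E]_B \to B$. By the defining universal property of the local exponential, such a map corresponds to a morphism $(X,\sigma) \times_B (A \times B, \proj_2) \to (E, p)$ in $\sfrac{\bC}{B}$, where $p \colon E \to B$ is the given structure. Since $A \times B \to B$ is the projection, the fibre product $(X,\sigma) \times_B (A \times B)$ is simply $X \times A$, equipped with the structure map $\sigma \circ \proj_1$. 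Thus the data is exactly a pair $(\sigma, f)$ where $\sigma \colon X \to B$ and $f \colon X \times A \to E$ satisfies $pf = \sigma \circ \proj_1$.

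Next I would unfold the right-hand side. A map $X \to B \times_{[A,B]} [A,E]$ in $\bC$ is by the universal property of the pullback a pair consisting of $\sigma \colon X \to B$ and $g \colon X \to [A,E]$ such that the composite $X \to [A,E] \to [A,B]$ equals the composite $X \xrightarrow{\sigma} B \to [A,B]$, the latter being the transpose of $\sigma \circ \proj_1 \colon X \times A \to B$. Under the exponential transpose $\bC(X, [A,E]) \cong \bC(X \times A, E)$, the map $g$ corresponds to some $f \colon X \times A \to E$, and the compatibility condition translates under the transpose to $pf = \sigma \circ \proj_1$.

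The two descriptions manifestly produce the same set of pairs $(\sigma, f)$ with the same compatibility condition, and each step is natural in $X$; hence the Yoneda lemma gives the desired isomorphism in $\bC$. There is no significant obstacle here since $A$ being exponentiable in $\bC$ guarantees the existence of $[A,E]$ and $[A,B]$ and the adjunction used, and $A \times B \to B$ being exponentiable in $\sfrac{\bC}{B}$ (which follows by Beck–Chevalley from exponentiability of $A$) guarantees that $[A \times B, E]_B$ exists; the only care needed is to keep track of structure maps over $B$ when applying the exponential transpose.
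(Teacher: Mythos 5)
Your proof is correct and follows essentially the same route as the paper: both are representability/Yoneda arguments that unfold maps into $[A \times B, E]_B$ and into $B \times_{[A,B]} [A,E]$ as pairs consisting of a structure map $\sigma \colon X \to B$ together with a map $A \times X \to E$ lying over $\sigma$, and then match the two descriptions via the exponential transpose. The only cosmetic difference is that the paper phrases this as the two objects representing the same presheaf on the slice $\sfrac{\bC}{B}$ (fibring the Hom-sets over $\bC(X,B)$), whereas you work with maps out of an arbitrary $X \in \bC$ and carry the structure map along explicitly; the content is identical.
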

\begin{proof}
  We argue using representability, by showing that
  $[A \times B, E]_B \to B \in \sfrac{\bC}{B}$ and
  $B \times_{[A,B]} [A,E] \to B \in \sfrac{\bC}{B}$ represent the same presheaf
  over $\sfrac{\bC}{B}$.

  We notice that, in the diagram on the left, for each $x \colon X \to B \in \sfrac{\bC}{B}$, the local
  Hom-set $\sfrac{\bC}{B}(A \times X, E)$ are precisely those maps
  $A \times X \to E$ whose post-composition with $E \to B$ is exactly
  $A \times X \xrightarrow{\proj_2} X \xrightarrow{x} B$.
  Therefore, we have the pullback as on the right.
  \begin{center}
    \begin{minipage}{0.2\linewidth}
      \begin{equation*}\small
        \begin{tikzcd}[cramped, row sep=small, column sep=small]
          {A \times X} & {A \times B} & E \\
          X & B
          \arrow[from=1-1, to=1-2]
          \arrow[curve={height=-12pt}, dashed, from=1-1, to=1-3]
          \arrow[from=1-1, to=2-1]
          \arrow[from=1-2, to=2-2]
          \arrow[from=1-3, to=2-2]
          \arrow[from=2-1, to=2-2]
        \end{tikzcd}
      \end{equation*}
    \end{minipage}
    \begin{minipage}{0.7\linewidth}
      \begin{equation*}\small
        \begin{tikzcd}[cramped, row sep=small, column sep=small]
          {\sfrac{\bC}{B}(A \times X, E)} & {\bC(X, B \times_{[A, B]} [A, E])} & {\bC(A \times X, E)} & {\bC(X, [A,E])} \\
          1 & {\bC(X,B)} & {\bC(A \times X,B)} & {\bC(X, [A,B])}
          \arrow[from=1-1, to=1-2]
          \arrow[from=1-1, to=2-1]
          \arrow[from=1-2, to=1-3]
          \arrow[from=1-2, to=2-2]
          \arrow["\cong", from=1-3, to=1-4]
          \arrow[from=1-3, to=2-3]
          \arrow[from=1-4, to=2-4]
          \arrow["x"', from=2-1, to=2-2]
          \arrow[from=2-2, to=2-3]
          \arrow["\cong"', from=2-3, to=2-4]
          \arrow["\lrcorner"{anchor=center, pos=0.15, scale=1.5}, draw=none, from=1-1, to=2-2]
          \arrow["\lrcorner"{anchor=center, pos=0.15, scale=1.5}, draw=none, from=1-2, to=2-3]
          \arrow["\lrcorner"{anchor=center, pos=0.15, scale=1.5}, draw=none, from=1-3, to=2-4]
        \end{tikzcd}
      \end{equation*}
    \end{minipage}
  \end{center}
  But analogously, the fibre of $\bC(X, B \times_{[A, B]} [A, E]) \to \bC(X,B)$
  over $x$ is exactly $\sfrac{\bC}{B}(X, B \times_{[A, B]} [A, E])$ as well.
  This proves
  $\sfrac{\bC}{B}(A \times X, E) \cong \sfrac{\bC}{B}(X, B \times_{[A, B]} [A,
  E])$ so that $[A \times B, E]_B \cong B \times_{[A \times B]} [A, E]$.
\end{proof}

As from the name, we wish to take the $I$-path objects as a path object
construction in slices.
They indeed serve this goal in model categories when $I$ is chosen to be an
interval.
Our notion of an interval is made precise using the notion of good cylinder
objects, which we now recall.
\begin{definition}
  In a model category $\bC$, a \emph{good} cylinder object for an object $A$ is
  the middle object $\textsf{Cyl}(A)$ of any factorisation of the codiagonal
  $A \sqcup A \to A$ as
  $A \sqcup A \hookrightarrow \textsf{Cyl}(A) \xrightarrow{\sim} A$ into a
  cofibration followed by a weak equivalence.
  It is \emph{very good} when $\textsf{Cyl}(A) \xrightarrow{\sim} A$ is a
  trivial fibration.
\end{definition}
\begin{theorem}\label{thm:fibred-path-fibration}
  Let $\bC$ be locally cartesian closed and equipped with a model structure in
  which $I$ is a good, but not necessarily very good, cylinder object for the
  terminal object and the pullback-power of a cofibration with a fibration is a
  fibration.
  %

  Then, by taking $\partial I \coloneqq 1 \sqcup 1$, for any $E \twoheadrightarrow B \in \sfrac{\bC}{\underline{B}}$
  fibration, the pullback-Hom
  $\hP_{\ul{B}}^{I}(E \to B) \colon P_{\ul{B}}^{I}(E)
  \twoheadrightarrow
  \partial P_{\ul{B}}^{I}(E) \times_{\partial P_{\ul{B}}^I(E) } \partial P_{\ul{B}}^{I}(B)$
  is a fibration.
\end{theorem}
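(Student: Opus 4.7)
The plan is to reduce the slice-categorical claim to the analogous fibration statement in $\bC$ itself and then recover the slice version by a single pullback.

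First, since $I$ is a good cylinder object for the terminal $1$, the factorisation $1 \sqcup 1 \hookrightarrow I \xrightarrow{\sim} 1$ makes $\partial I = 1 \sqcup 1 \hookrightarrow I$ a cofibration in $\bC$. The slice model structure on $\sfrac{\bC}{\ul{B}}$ has its fibrations reflected from $\bC$, so $E \twoheadrightarrow B$ is in particular a fibration in $\bC$. Applying the pullback-power hypothesis in $\bC$ to the cofibration $\partial I \hookrightarrow I$ and the fibration $E \to B$, we obtain that
\[
  \hP_\bC^I(E \to B) \colon [I, E] \longrightarrow [\partial I, E] \times_{[\partial I, B]} [I, B]
\]
is a fibration in $\bC$.

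Next, I would identify $\hP_{\ul{B}}^I(E \to B)$ as the pullback of $\hP_\bC^I(E \to B)$ along the transpose $\ul{B} \to [I, \ul{B}]$ of the projection $I \times \ul{B} \to \ul{B}$. By \Cref{lem:local-exp-pb}, $P_{\ul{B}}^I(E) = [I \times \ul{B}, E]_{\ul{B}} \cong \ul{B} \times_{[I, \ul{B}]} [I, E]$, and analogously for $\partial P_{\ul{B}}^I(E)$, $\partial P_{\ul{B}}^I(B)$, and $P_{\ul{B}}^I(B)$. Since pullbacks in $\sfrac{\bC}{\ul{B}}$ are computed as pullbacks in $\bC$, and limits commute with limits, base change along $\ul{B} \to [I, \ul{B}]$ applied to the cospan $[\partial I, E] \to [\partial I, B] \leftarrow [I, B]$ (viewed over $[I, \ul{B}]$ via postcomposition with the structure map to $\ul{B}$) yields $\partial P_{\ul{B}}^I(E) \to \partial P_{\ul{B}}^I(B) \leftarrow P_{\ul{B}}^I(B)$. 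Under these identifications, $\hP_{\ul{B}}^I(E \to B)$ is exactly the pullback of $\hP_\bC^I(E \to B)$ along $\ul{B} \to [I, \ul{B}]$.

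Finally, since fibrations are stable under pullback and $\hP_\bC^I(E \to B)$ is a fibration in $\bC$, its pullback $\hP_{\ul{B}}^I(E \to B)$ is a fibration as well, and hence also a fibration in the slice model structure on $\sfrac{\bC}{\ul{B}}$. The only step needing care is the identification of the codomain of $\hP_{\ul{B}}^I$ as the pullback of the codomain of $\hP_\bC^I$ over $[I, \ul{B}]$; this is routine limit bookkeeping enabled by \Cref{lem:local-exp-pb}, after which the result follows by pullback stability of fibrations.
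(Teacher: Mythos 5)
Your proposal is correct and follows essentially the same route as the paper's proof: both use \Cref{lem:local-exp-pb} together with limit-commutation bookkeeping to exhibit $\hP_{\ul{B}}^{I}(E \to B)$ as a pullback of the global pullback-power $[I,E] \to [\partial I, E] \times_{[\partial I, B]} [I,B]$ along $\ul{B} \to [I,\ul{B}]$, and then conclude by the pullback-power hypothesis and pullback-stability of fibrations. Your explicit remarks that $1 \sqcup 1 \hookrightarrow I$ is a cofibration by goodness of the cylinder and that slice fibrations are reflected from $\bC$ are left implicit in the paper but are exactly the right justifications.
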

\begin{proof}
  %
  %
  Using \Cref{lem:local-exp-pb} four times and yet another 3-by-3 argument
  \begin{equation*}
    \begin{tikzcd}[cramped, row sep=tiny, column sep=tiny]
      {\ul{B}} &&& {\ul{B}} \\
      && {[I,\ul{B}]} &&& {[\partial I, \ul{B}]} \\
      &&&& {[I,E]} &&& {[\partial I, E]} \\
      {\ul{B}} &&& {\ul{B}} &&& {P_{\ul{B}}^I(E)} &&& {\partial P_{\ul{B}}^I(E)} \\
      && {[I,\ul{B}]} &&& {[\partial I, \ul{B}]} \\
      &&&& {[I,B]} &&& {[\partial I, B]} \\
      &&&&&& {P_{\ul{B}}^I(B)} &&& {\partial P_{\ul{B}}^I(B)}
      \arrow[from=1-1, to=1-4]
      \arrow[from=1-1, to=2-3]
      \arrow[from=1-1, to=4-1]
      \arrow[from=1-4, to=2-6]
      \arrow[from=1-4, to=4-4]
      \arrow[from=2-6, to=5-6]
      \arrow[from=3-8, to=2-6]
      \arrow[from=3-8, to=6-8]
      \arrow[from=4-1, to=4-4]
      \arrow[from=4-1, to=5-3]
      \arrow[from=4-4, to=5-6]
      \arrow[from=4-10, to=7-10]
      \arrow[from=5-3, to=5-6]
      \arrow[from=6-5, to=5-3]
      \arrow[from=6-5, to=6-8]
      \arrow[from=6-8, to=5-6]
      \arrow[from=7-7, to=7-10]
      \arrow[crossing over, from=2-3, to=2-6]
      \arrow[crossing over, from=2-3, to=5-3]
      \arrow[crossing over, from=3-5, to=2-3]
      \arrow[crossing over, from=3-5, to=3-8]
      \arrow[crossing over, from=3-5, to=6-5]
      \arrow[crossing over, from=4-7, to=4-10]
      \arrow[crossing over, from=4-7, to=7-7]
    \end{tikzcd}
  \end{equation*}
  we see that the pullback of the front face
  $\partial P_{\ul{B}}^I(E) \times_{\partial P_{\ul{B}}^I(E) } \partial
  P_{\ul{B}}^I(B)$ can be computed by first taking the pullbacks of each layer
  $\ul{B} \to [I,\ul{B}] \leftarrow [I,B] \times_{[\partial I, B]} [\partial I,
  E]$ then taking the limit.
  Thus, the map
  $\hP_{\ul{B}}^I(E \to B) \colon P_{\ul{B}}^I(E) \twoheadrightarrow \partial
  P_{\ul{B}}^I(E) \times_{\partial P_{\ul{B}}^I(E) } \partial P_{\ul{B}}^I(B)$
  fits into the following cube whose top, bottom and right face are pullbacks.
  \begin{equation*}
    \begin{tikzcd}[cramped, row sep=small, column sep=small]
      {P_{\ul{B}}^I(E)} && {\ul{B}} \\
      & {[I,E]} && {[I,\ul{B}]} \\
      {\partial P_{\ul{B}}^I(E) \times_{\partial P_{\ul{B}}^I(E) } \partial   P_{\ul{B}}^I(B)} && {\ul{B}} \\
      & {[I,B] \times_{[\partial I, B]} [\partial I, E]} && {[I, \ul{B}]}
      \arrow[from=1-1, to=1-3]
      \arrow[from=1-1, to=2-2]
      \arrow[from=1-1, to=3-1]
      \arrow[from=1-3, to=2-4]
      \arrow[from=1-3, to=3-3]
      \arrow[from=2-4, to=4-4]
      \arrow[from=3-1, to=3-3]
      \arrow[from=3-1, to=4-2]
      \arrow[from=3-3, to=4-4]
      \arrow[from=4-2, to=4-4]
      \arrow[crossing over, from=2-2, to=2-4]
      \arrow[crossing over, from=2-2, to=4-2]
      \arrow["\lrcorner"{anchor=center, pos=0.15, scale=1.5}, draw=none, from=1-1, to=2-4]
      \arrow["\lrcorner"{anchor=center, pos=0.15, scale=1.5}, draw=none, from=1-1, to=4-2]
      \arrow["\lrcorner"{anchor=center, pos=0.15, scale=1.5}, draw=none, from=1-3, to=4-4]
      \arrow["\lrcorner"{anchor=center, pos=0.15, scale=1.5}, draw=none, from=3-1, to=4-4]
    \end{tikzcd}
  \end{equation*}
  This means that, $\hP_{\ul{B}}^I(E \to B)$ occurs as a pullback of
  $[I,E] \to [I,B] \times_{[\partial I, B]} [\partial I, E]$.
  The result then follows by the pullback-power assumption.
  %
  %
  %
\end{proof}

In view of \Cref{thm:fibred-path-fibration}, we now axiomatise the structures of
an interval object so that \Cref{thm:fibred-path-fibration} can be reproduced in
a type-theoretic setting in \Cref{cor:struct-ltrans-path}.
\begin{definition}\label{def:interval-obj}
  A \emph{pre-interval} structure on an exponentiable object $I$ consists of two
  points $\set{0}, \set{1} \rightrightarrows I$ along with a map
  $\partial I \hookrightarrow I$ between exponentiable objects such that for the
  pullback $\set{0} \cap \set{1}$ defined as the pullback on the left, one has a
  factorisation on the right.
  \begin{center}
    \begin{minipage}{0.45\linewidth}
      \begin{equation*}
        \begin{tikzcd}
          \set{0} \cap \set{1}
          \ar[r, hook] \ar[d, hook] \ar[rd, "{\lrcorner}"{pos=0.125}, draw=none]
          & \set{0} \ar[d, hook]
          \\
          \set{1} \ar[r, hook] & I
        \end{tikzcd}
      \end{equation*}
    \end{minipage}
    \begin{minipage}{0.45\linewidth}
      \begin{equation*}
        \begin{tikzcd}
          \set{0} \cap \set{1}
          \ar[r, hook] \ar[d, hook]
          & \set{0} \ar[d, hook] \ar[rdd, bend left=20, hook]
          \\
          \set{1} \ar[drr, bend right=20, hook] \ar[r, hook] & \partial I \ar[rd, hook]
          \\
          & & I
        \end{tikzcd}
      \end{equation*}
    \end{minipage}
  \end{center}

  A pre-interval structure on an object $I$ becomes an \emph{interval structure
    relative to a set of objects $\McB$} when for each $B \in \McB$, one
  further has that $[\set{0} \cap \set{1}, B] \cong 1$ and that
  $B \cong [\set{0}, B] \to [\partial I, B] \leftarrow [\set{1}, B] \cong B$
  is a product span (i.e. $[\partial I, B] \cong B \times B$).
\end{definition}

\begin{corollary}\label{cor:struct-ltrans-path}
  Fix a map $E \to B$ along with an exponentiable object $I$ equipped with an
  interval structure $(\set{0},\set{1} \rightrightarrows I, \partial I \to I)$
  relative to $\set{E,B}$.

  Further take maps $\partial V \to V \in \sfrac{\bC}{C}$ between exponentiable
  objects in a slice and a map
  $\partial(V \times I) \to V \times I \in \sfrac{\bC}{C}$ between exponentiable
  objects that structurally approximates
  $(\partial V \to V) \ltimes_C (\partial I \times C \to I \times C)$ relative
  to the map $E \times C \to B \times C \in \sfrac{\bC}{C}$.
  Then, we have a bijection
  \begin{equation*}
    \left(
    \begin{tikzcd}[cramped, column sep=small]
      \partial(V \times I) \ar[d] \\ V \times I \ar[r] & V
    \end{tikzcd}
    \fracsquareslash{C}
    \begin{tikzcd}[cramped, column sep=small]
      E \times C \ar[d] \\ B \times C
    \end{tikzcd}
    \right)
    \cong
    \left(
    \begin{tikzcd}[cramped, column sep=small]
      \partial V \ar[d] \\ V
    \end{tikzcd}
    \fracsquareslash{C}
    \begin{tikzcd}[cramped, column sep=small]
      {P^I_B(E) \times C} \ar[d] \\
      {(E \times_{B} E) \times C}
    \end{tikzcd}
    \right)
  \end{equation*}
\end{corollary}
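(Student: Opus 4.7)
The plan is to deduce this corollary from Corollary~\ref{cor:struct-ltrans*-local} by specializing $L \coloneqq I$, $L' \coloneqq 1$, and $\underline{E} \coloneqq B$, and then using the interval structure on $I$ to simplify the resulting right-hand side. All hypotheses of Corollary~\ref{cor:struct-ltrans*-local} hold under this specialization: $\partial I$ and $I$ are exponentiable by assumption and $1$ is trivially so; the structured approximation of $\partial(V \times I) \to V \times I$ relative to $E \times C \to B \times C$ is exactly what we are given; and $V \times L' = V \times 1 \cong V$, yielding the left restriction appearing in the statement.

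The substantive work is identifying the two pullback-Hom objects on the right of Corollary~\ref{cor:struct-ltrans*-local} with $P^I_B(E) \times C$ and $(E \times_B E) \times C$. For the top object, Corollary~\ref{cor:struct-ltrans*-local} produces $\left([I,E] \times_{[I,B]} [1,B]\right) \times C$; since $[1,B] \cong B$, this becomes $\left([I,E] \times_{[I,B]} B\right) \times C$, which by Lemma~\ref{lem:local-exp-pb} (applied with $A = I$) is $[I \times B, E]_B \times C$. Unwinding Definition~\ref{def:I-path-obj}, this is exactly $P^I_B(E) \times C$.

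For the bottom object, Corollary~\ref{cor:struct-ltrans*-local} gives $\left([\partial I, E] \times_{[\partial I, B]} [1,B]\right) \times C \cong \left([\partial I, E] \times_{[\partial I, B]} B\right) \times C$. The assumed interval structure on $I$ relative to $\{E,B\}$ provides the identifications $[\partial I, E] \cong E \times E$ and $[\partial I, B] \cong B \times B$. By naturality of these product identifications in the second argument, the map $[\partial I, E] \to [\partial I, B]$ induced by $p \colon E \to B$ becomes $p \times p$; similarly, the map $[1,B] \cong B \to [\partial I, B] \cong B \times B$ induced by $\partial I \to 1$ becomes the diagonal, since a section $b$ of $B$ is sent to the constant map $\partial I \to B$ with value $b$, which corresponds under the product identification to $(b,b)$. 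Hence the pullback computes as $(E \times E) \times_{B \times B} B \cong E \times_B E$, as required.

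The main obstacle, and the one step deserving care, is verifying that the two product-span identifications on $[\partial I, -]$ from the interval structure are indeed compatible with the structure maps $p \colon E \to B$ and $\partial I \to 1$ in the precise way described; once this naturality is pinned down, the rest of the proof is a direct assembly of Corollary~\ref{cor:struct-ltrans*-local} and Lemma~\ref{lem:local-exp-pb} together with routine unwinding of $P^I_B(E)$ from Definition~\ref{def:I-path-obj}.
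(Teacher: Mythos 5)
Your proposal is correct and follows exactly the route the paper takes: its proof of this corollary is simply ``Immediate by Corollary~\ref{cor:struct-ltrans*-local} and Lemma~\ref{lem:local-exp-pb},'' i.e.\ specialize $L \coloneqq I$, $L' \coloneqq 1$, $\underline{E} \coloneqq B$ and then identify the two pullback-Homs via \Cref{lem:local-exp-pb} and the interval-structure axioms ($[\partial I,-]$ being a product span and $[1,-]\to[\partial I,-]$ the diagonal). You have in fact spelled out the identifications in more detail than the paper does, and the naturality point you flag is exactly the routine check the paper leaves implicit.
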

\begin{proof}
  Immediate by \Cref{cor:struct-ltrans*-local,lem:local-exp-pb}.
\end{proof}

In anticipation of working with slice categories, we note several base change
properties for interval structures.
These are analogues of
\Cref{lem:pushout-product-approx-pb,constr:struct-lift-pb} but for interval and
path objects.


\begin{lemma}\label{lem:interval-obj-fibred-path-pb}
  Let there be a map $\phi \colon D \to C$.
  Then, a pre-interval structure
  $(\set{0}, \set{1} \rightrightarrows I, \partial I \to I)$ on an exponentiable
  object $I \to C \in \sfrac{\bC}{C}$ gives rise to a pre-interval structure
  $(\phi^*\set{0}, \phi^*\set{1} \rightrightarrows \phi^*I, \phi^*(\partial I)
  \to \phi^*I)$ on $\phi^*I \to D \in \sfrac{\bC}{C}$.

  If further this pre-interval structure on $I \to C$ is an interval structure
  relative to an object $B \to C \in \sfrac{\bC}{C}$ then the pre-interval
  structure on $\phi^*I \to D$ is also an interval structure relative to
  $\phi^*B \to D$.

  Likewise, for maps $E \to B, \partial I \to I \in \sfrac{\bC}{C}$, the image
  under the pullback functor $\phi^* \colon \sfrac{\bC}{C} \to \sfrac{\bC}{D}$
  of the fibred path object, the endpoint evaluation maps and the pullback-Hom
  constructed from $\partial I \to I \in \sfrac{\bC}{C}$ as on the left column
  are respectively isomorphic to the fibred path object, the endpoint evaluation
  maps and the pullback-Hom constructed from the interval object
  $\phi^*(\partial I) \to \phi^*I \in \sfrac{\bC}{D}$ as on the right column.
  \begin{equation*}\small
    \begin{tikzcd}[cramped, column sep=0.1em, row sep=0.25em, execute at end picture={
        \begin{scope}[on background layer]
          \draw[->] (C) -- (D);
        \end{scope}}]
      |[alias=C]| {\sfrac{\bC}{C}} & \colorbox{bgcolor}{$\phi^*$} & |[alias=D]| {\sfrac{\bC}{D}} \\
      {P_{\bC/C}^I(E)} & \mapsto & {P_{\bC/D}^{\phi^*I}(\phi^*E)} \\
      {P_{\bC/C}^I(B)} & \mapsto & {P_{\bC/D}^{\phi^*I}(\phi^*B)} \\
      {\left(P_{\bC/C}^I(E) \to \partial P_{\bC/C}^I(E)\right)} & \mapsto & {\left(P_{\bC/D}^{\phi^*I}(E) \to \partial P_{\bC/D}^{\phi^*I}(E)\right)} \\
      {\left(P_{\bC/C}^I(B) \to \partial P_{\bC/C}^I(B)\right)} & \mapsto & {\left(P_{\bC/D}^{\phi^*I}(B) \to \partial P_{\bC/D}^{\phi^*I}(B)\right)} \\
      {\scriptstyle \left(P_{\bC/C}^I(B) \xrightarrow{\hP_{\bC/C}^I(E \to B)} \partial P^I_{\bC/C}(E) \times_{\partial P^I_{\bC/C}(E)} P^I_{\bC/C}(B)\right)} & \mapsto & {\scriptstyle \left(P_{\bC/D}^{\phi^*I}(\phi^*B) \xrightarrow{\hP_{\bC/C}^{\phi^*I}(\phi^*E \to \phi^*B)} \partial P^{\phi^*I}_{\bC/D}(\phi^*E) \times_{\partial P^{\phi^*I}_{\bC/D}(\phi^*E)} P^{\phi^*I}_{\bC/D}(\phi^*B)\right)}
    \end{tikzcd}
  \end{equation*}
\end{lemma}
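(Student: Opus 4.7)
The plan is to run the entire argument from the single observation that the pullback functor $\phi^* \colon \sfrac{\bC}{C} \to \sfrac{\bC}{D}$ preserves finite limits and, because $\bC$ is locally cartesian closed, preserves local exponentials of exponentiable objects via the Beck--Chevalley isomorphism $\phi^*[A,B]_C \cong [\phi^*A, \phi^*B]_D$. Every piece of the lemma is ultimately built out of finite limits and local exponentials, so each required comparison isomorphism will follow formally once we identify the shape being preserved.

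First I would handle the pre-interval structure. The image under $\phi^*$ of the two points $\set{0},\set{1}\rightrightarrows I$ gives two points $\phi^*\set{0},\phi^*\set{1}\rightrightarrows\phi^*I \in \sfrac{\bC}{D}$, and the image of $\partial I \to I$ gives a map $\phi^*(\partial I) \to \phi^*I$ between exponentiable objects (exponentiability is preserved by pullback in locally cartesian closed categories). Because $\phi^*$ preserves pullbacks, it sends the square defining $\set{0}\cap\set{1}$ to a pullback square defining $\phi^*(\set{0}\cap\set{1}) \cong \phi^*\set{0}\cap\phi^*\set{1}$, and since $\phi^*$ is a functor it sends the factorization through $\partial I$ to a factorization through $\phi^*(\partial I)$. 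This gives the pre-interval structure on $\phi^*I \to D$.

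For the interval condition relative to $\phi^*B \to D$, I would apply the Beck--Chevalley preservation of exponentials: the hypothesis $[\set{0}\cap\set{1}, B]_C \cong 1$ gives, after $\phi^*$, an isomorphism $[\phi^*(\set{0}\cap\set{1}), \phi^*B]_D \cong \phi^*(1) \cong 1$, and the hypothesis $[\partial I, B]_C \cong B\times_C B$ gives $[\phi^*(\partial I), \phi^*B]_D \cong \phi^*(B\times_C B) \cong \phi^*B \times_D \phi^*B$, using that $\phi^*$ preserves $1$ and binary products. This is exactly the interval condition for $\phi^*I \to D$ relative to $\phi^*B \to D$.

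Finally, for the table of constructions, I would unwind each row as follows: $P^I_{\bC/C}(E) = [I,E]_C$ and $\partial P^I_{\bC/C}(E) = [\partial I, E]_C$, so by the same Beck--Chevalley preservation their images under $\phi^*$ are $[\phi^*I, \phi^*E]_D = P^{\phi^*I}_{\bC/D}(\phi^*E)$ and $[\phi^*(\partial I), \phi^*E]_D = \partial P^{\phi^*I}_{\bC/D}(\phi^*E)$; functoriality of $[-,-]_{(-)}$ then identifies the endpoint evaluation map with its pulled-back counterpart. The pullback-Hom $\hP^I_{\bC/C}(E\to B)$ is defined as a single map into a pullback of the four exponentials just described, so preservation of finite limits together with preservation of each individual exponential gives the required isomorphism of maps. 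The main obstacle is purely bookkeeping: assembling the Beck--Chevalley coherence across the three layers (points, boundary inclusion, path objects) into one coherent cube rather than proving anything essentially new.
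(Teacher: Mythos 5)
Your proposal is correct and takes exactly the same route as the paper, whose entire proof is the one-line observation that the pullback functor preserves internal-Homs and limits; you have simply carried out the bookkeeping explicitly. The Beck--Chevalley preservation of local exponentials and the preservation of finite limits (including the terminal object and binary products in the slice) are precisely the facts the paper invokes.
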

\begin{proof}
  Straightforward by using the fact that the pullback functor preserves
  internal-Homs and limits.
\end{proof}


\section{Relating Lifts}\label{sec:relating-lifts}
In orthogonal factorisation systems, lifts are unique and in model
categories, these lifts are homotopically unique.
In other words, two lifts to the same lifting problem in these systems are
related in some way: by the equality relation in the former case and by the
homotopy equivalence relation in the latter.
On the other hand, in \Cref{sec:struct-lift}, we introduced the concept of
lifting structures as specific choices of lifts to lifting problems satisfying
some uniformity conditions.
Therefore, we would like to combine these two concepts of relating two lifts and
uniform choices of lifts to talk about two uniform lifts being uniformly related
to each other.
In particular, we show in \Cref{thm:lift-htpy-unique} that in certain model
categories, the homotopies involved in the homotopy uniqueness of lifts can be
chosen in a suitably uniform manner.

In order to do so, we first fix an abstraction of a ``relation'' and a
``witness'' of said relation, which is an internalisation of a binary relation
to slice categories.

\begin{definition}\label{def:fibrewise-rel}
  Let there be a map $E \to B$.
  A \emph{fibrewise relation} is a map $R_{B}(E) \to E \times_{B} E$.
  Given $C \to B$ and two maps $f_1,f_2 \colon C \to E$ over
  $B$, we say \emph{$H$ witnesses that $f_1$ and $f_2$ agree up to
    relation $R_{B}(E)$} when $H \colon C \to R_{B}(E)$
  is a factorisation of $(f_1,f_2) \colon C \to E \times_{B} E$ via
  $R_{B}(E) \to E \times_{B} E$.
  \begin{equation*}
    \begin{tikzcd}[cramped, row sep=small]
      &&& E \\
      C && {R_{B}(E)} && {E \times_{B} E} \\
      && E \\
      &&& {B}
      \arrow[from=1-4, to=4-4]
      \arrow["{f_1}", from=2-1, to=1-4, dashed]
      \arrow["H"{description}, dashed, from=2-1, to=2-3]
      \arrow["{f_2}"{description}, from=2-1, to=3-3, dashed]
      \arrow[curve={height=12pt}, from=2-1, to=4-4]
      \arrow[from=2-5, to=1-4]
      \arrow[from=2-5, to=4-4]
      \arrow[from=3-3, to=4-4]
      \arrow[crossing over, from=2-3, to=2-5]
      \arrow[crossing over, from=2-5, to=3-3]
    \end{tikzcd}
  \end{equation*}
\end{definition}

\begin{example}\label{ex:fibrewise-rel}
  Our two main sources of fibrewise relation are those giving rise to an equality
  and a right homotopy equivalence.
  \begin{enumerate}
    \item If one takes $R_B(E)$ to be the diagonal
    $E \hookrightarrow E \times_B E$ witnesses $H$ are unique and $f_1$ agrees
    with $f_2$ precisely when they are exactly equal.
    \item In model categories, if $R_B(E)$ is taken to be the path object of $E$
    as an object of the slice over $B$ then a witness $H$ is precisely a right
    homotopy between $f_1$ and $f_2$.
  \end{enumerate}
\end{example}

We will now try to motivate how these fibrewise relations are used to uniformly
relate lifts.
Consider a map $U \to V$ and a map of spans $(e,b,b')$ and two lifting
structures $F_i$ for $i=1,2$
\begin{center}
  \begin{minipage}{0.45\linewidth}
    \begin{equation*}
      \begin{tikzcd}[cramped, row sep=small, column sep=small]
        && {E_1} \\
        &&& {E_2} \\
        {B_1'} && {B_1} \\
        & {B_2'} && {B_2}
        \arrow["e", from=1-3, to=2-4]
        \arrow[from=1-3, to=3-3]
        \arrow[from=2-4, to=4-4]
        \arrow[from=3-1, to=3-3]
        \arrow["{b'}"', from=3-1, to=4-2]
        \arrow["{b}"{description}, from=3-3, to=4-4]
        \arrow[from=4-2, to=4-4]
      \end{tikzcd}
    \end{equation*}
  \end{minipage}
  \begin{minipage}{0.45\linewidth}
    \begin{equation*}
      F_i \in {\begin{tikzcd}[cramped, column sep=small, row sep=small]
          U \ar[dd] \\ \\ V
        \end{tikzcd}}
      \squareslash
      {\begin{tikzcd}[cramped, column sep=small, row sep=small]
          && E_i \ar[dd] \\ \\ B_i' \ar[rr] & & B_i
        \end{tikzcd}}
    \end{equation*}
  \end{minipage}
\end{center}
Then, for $X \in \bC$ fixed, given two lifting problems $(u,v)$ of
$X \times U \to X \times V$ against $E_1 \to B_1$ restricted along
$B_1' \to B_1$ as below, one can produce a solution $F_1(u,v)$.
On the other hand, one can also induce a new lifting problem $(eu, b'v)$ against
$E_2 \to B_2$ restricted along $B_2' \to B_2$ and get a solution $F_2(eu, b'v)$.
This gives two maps
$u \cdot F_1(u,v), F_2(eu, b'v) \colon X \times U \rightrightarrows E_2$ over
$B_2$ as on the left below, where the triangle involving the lifts and the map
$e$ does not necessarily commute.
\begin{center}
  \begin{minipage}{0.45\linewidth}
    \begin{tikzcd}[cramped]
      {X \times U} && {E_1} \\
      {X \times V} & {B_1'} & {B_1} & {E_2} \\
      && {B_2'} & {E_2}
      \arrow["u", from=1-1, to=1-3, dashed, color=red0]
      \arrow[from=1-1, to=2-1]
      \arrow[from=1-3, to=2-3]
      \arrow["e", from=1-3, to=2-4]
      \arrow["{F_1(u, v)}"{description}, dotted, from=2-1, to=1-3, color=yellow0]
      \arrow["{v}"{description}, from=2-1, to=2-2, dashed, color=red0]
      \arrow[from=2-2, to=2-3]
      \arrow["{b'}"{description, pos=0.6}, from=2-2, to=3-3]
      \arrow["{b}"{description}, from=2-3, to=3-4]
      \arrow[from=2-4, to=3-4]
      \arrow[from=3-3, to=3-4]
      \arrow["{F_2(eu,b'v)}"'{pos=0.2}, curve={height=16pt}, dotted, from=2-1, to=2-4, crossing over, color=yellow0]
    \end{tikzcd}
  \end{minipage}
  \begin{minipage}{0.45\linewidth}
    \begin{equation*}
      \begin{tikzcd}
        & {B_1} & {B_2} \\
        {X \times V} && {R_{\underline{B_2}}(B_2)} \\
        && {B_2}
        \arrow["b", from=1-2, to=1-3]
        \arrow["{F_1(u,v)}", from=2-1, to=1-2, dotted, color=yellow0]
        \arrow["{H(u,v)}"{description}, from=2-1, to=2-3, dotted, color=magenta0]
        \arrow["{F_2(eu,b'v)}"', from=2-1, to=3-3, dotted, color=yellow0]
        \arrow[from=2-3, to=1-3]
        \arrow[from=2-3, to=3-3]
      \end{tikzcd}
    \end{equation*}
  \end{minipage}
\end{center}
A witness $H(u,v)$ that $u \cdot F_1(u,v)$ and $F_2(eu, b'v)$ are related up to
relation $R_{B_2}(E_2) \to E_2 \times_{B_2} E_2$ is then a factorisation $H$ as
on the right above.

Instantiating with \Cref{ex:fibrewise-rel}, we see that if
$R_{B_2}(E_2) \to E_2 \times_{B_2} E_2$ is the diagonal then such a map $H(u,v)$
exists precisely when $e \cdot F_1(u,v) = F_2(eu,b'v)$ and if
$R_{B_2}(E_2) \to E_2 \times_{B_2} E_2$ is the fibred path object then $H(u,v)$
is a right homotopy $H(u,v) \colon e \cdot F_1(u,v) \simeq F_2(eu,b'v)$ over
$E_2$.
As a further special case, if $(e,b,b')$ are taken to be all identities, then
$F_1,F_2$ both solve the same lifting problems, so the above two cases reduce
respectively to saying $F_1,F_2$ produce the same lifting solution or
right homotopically identical lifting solutions.

Now fix another map $t \colon Y \to X$ so that uniformity condition gives
\begin{align*}
  F_1(u,v) \cdot (t \times V) &= F_1(u \cdot (t \times U), v \cdot (t \times V)) \\
  F_2(eu,b'v) \cdot (t \times V) &= F_2(eu \cdot (t \times U), b'v \cdot (t \times V))
\end{align*}
Then, using $H(u,v)$, one can produce a witness $H(u,v) \cdot (t \times V)$
saying that $e \cdot F_1(u \cdot (t \times U), v \cdot (t \times V))$ and
$F_2(eu \cdot (t \times U), b'v \cdot (t \times V)$.

Therefore, if we let $(u,v)$ vary and associate a family of witnesses $H(u,v)$,
a natural uniformity condition is to require that
\begin{equation*}
  H(u,v) \cdot (t \times V) = H(u \cdot (t \times U), v \cdot (t \times V))
\end{equation*}
In view of the internalisation results of \Cref{lem:stable-right-rep}, we see
that the uniformity condition of witnesses can be formalised as follows using
the locally cartesian closed structure.

\begin{definition}\label{def:struct-lift-rel}
  Fix a map $U \to V$ between exponentiable objects and $(e,b,b')$ a map of
  spans and two lifting structures $F_i$ for $i=1,2$
  \begin{center}
    \begin{minipage}{0.45\linewidth}
      \begin{equation*}
        \begin{tikzcd}[cramped, row sep=small, column sep=small]
          && {E_1} \\
          &&& {E_2} \\
          {B_1'} && {B_1} \\
          & {B_2'} && {B_2}
          \arrow["e", from=1-3, to=2-4]
          \arrow[from=1-3, to=3-3]
          \arrow[from=2-4, to=4-4]
          \arrow[from=3-1, to=3-3]
          \arrow["{b'}"', from=3-1, to=4-2]
          \arrow["{b}"{description}, from=3-3, to=4-4]
          \arrow[from=4-2, to=4-4]
        \end{tikzcd}
      \end{equation*}
    \end{minipage}
    \begin{minipage}{0.45\linewidth}
      \begin{equation*}
        F_i \in {\begin{tikzcd}[cramped, column sep=small, row sep=small]
            U \ar[dd] \\ \\ V
          \end{tikzcd}}
        \squareslash
        {\begin{tikzcd}[cramped, column sep=small, row sep=small]
            && E_i \ar[dd] \\ \\ B_i' \ar[rr] & & B_i
          \end{tikzcd}}
      \end{equation*}
    \end{minipage}
  \end{center}
  Let $R_{B_2}(E_2) \to E_2 \times_{B_2} E_2$ be a fibrewise relation.

  A \emph{structured $R_{B_2}(E_2)$-witness $H$ from $F_1$ to $F_2$ via
    $(e,b,b')$} is a witness $H$ that the top square of the following diagram
  commutes up to the fibrewise relation
  $[V, R_{B_2}(E_2)] \to [V, E_2 \times_{B_2} E_2] \cong [V, E_2] \times_{[V,
    B_2]} [V, E_2]$.
  \begin{equation*}
    \begin{tikzcd}[cramped]
      {\color{red0}[U, E_1] \times_{[U, B_1]} [V, B_1']}
      && {\color{red0}[U, E_2] \times_{[U, B_2]} [V, B_2']} \\
      & {\color{yellow0}[V, E_1]} && {\color{yellow0}[V, E_2]} \\
      {\color{darkblue0}[U, E_1] \times_{[U, B_1]} [V, B_1]}
      && {\color{darkblue0}[U, E_2] \times_{[U, B_2]} [V, B_2]} \\
      {\color{darkblue0}[V, B_1]} && {\color{darkblue0}[V, B_2]}
      \arrow[from=1-1, to=1-3]
      \arrow["{F_1}", dotted, from=1-1, to=2-2]
      \arrow[from=1-1, to=3-1, color=red0]
      \arrow["{F_2}", dotted, from=1-3, to=2-4]
      \arrow[from=1-3, to=3-3, color=red0]
      \arrow["H"{description}, phantom, from=2-2, to=1-3]
      \arrow[from=2-2, to=3-1, color=yellow0]
      \arrow[from=2-4, to=3-3, color=yellow0]
      \arrow[from=3-1, to=3-3]
      \arrow[from=3-1, to=4-1, color=darkblue0]
      \arrow[from=3-3, to=4-3, color=darkblue0]
      \arrow[from=4-1, to=4-3]
      \arrow[from=2-2, to=2-4, crossing over]
    \end{tikzcd}
  \end{equation*}

  If $(e,b,b')$ is the identity then we simply say \emph{$H$ is a structural
    $R_{B_2}(E_2)$-witness between $F_1$ and $F_2$}.
\end{definition}

With this formal definition in place, we show that in model categories, when the
fibrewise relation is taken to be the fibrewise path object and when the
structured lifting problem is either (trivial cofibration, fibration) or
(cofibration, trivial fibration) then there always exist structured lifting
solutions and moreover any two such structured lifting solutions are
structurally homotopic.
In other words, we will show in \Cref{thm:lift-htpy-unique} structured lifts
are structurally homotopically unique if they exist.
But for that we, first need the following technical lemma.

\begin{lemma}\label{lem:lift-htpy-unique-master}
  Let $\bC$ be locally cartesian closed and equipped with a model structure in
  which:
  \begin{itemize}
    \item $I$ is a good, but not necessarily very good, cylinder object for the
    terminal object.
    \item The cofibrations are preserved by products.
  \end{itemize}

  Further fix a map $E \to B$ and an object $A$ along with a
  factorisation of $[A,E] \to [A,B]$ into
  $\begin{tikzcd}[cramped, column sep=1em]
    {[A, E]} \ar[r, two heads,"{\sim}"{pos=0.2}] &
    {C} \ar[r] &
    {[A,B]}
  \end{tikzcd}$ where the first map is a trivial fibration.
  Take $P^I_{B}(E) \coloneqq [I,E] \times_{[I,B]} B$ to be the fibred $I$-path
  object of $E \to B$ constructed from $I$ so that one has the endpoint
  evaluation map
  $\ev_\partial \colon P^I_{B}(E) \to E \times_{B} E$.
  Then, for any $d \colon D \to C$ and pair of maps $f_0,f_1 \colon D \to [A,B]$
  over $C$ as below on the left we have a factorisation $H$ as below on the
  right.
  \begin{equation*}
    \begin{tikzcd}[cramped]
      D
      \ar[rr, "f_0"', yshift=-2]
      \ar[rr, "f_1", yshift=2]
      \ar[rd, "d"{description}]
      \ar[rdd, bend right=10, "{f}"']
      &&
      {[A,E]}
      \ar[ld, two heads, "{\sim}"{description}]
      \ar[ldd, bend left=10]
      \\
      & C \ar[d]
      \\
      & {[A,B]}
    \end{tikzcd}
    \rightsquigarrow
    \begin{tikzcd}[cramped, row sep=small, column sep=small]
      D \ar[rr, "{(f_0, f_1)}"] \ar[rd, dashed, "{\exists H}"'] && {[A, E \times_{B} E]}
      \\
      & {[A, P_{B}(E)]} \ar[ru, "{[A, \ev_\partial]}"']
    \end{tikzcd}
  \end{equation*}
\end{lemma}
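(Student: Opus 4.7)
The plan is to reduce the construction of $H$ to a single cofibration-versus-trivial-fibration lifting problem, using the adjunction $(- \times A) \dashv [A, -]$ together with the cylinder structure on $I$. Since $[A, -]$ preserves limits, there is a natural identification
\begin{equation*}
[A, P^I_B(E)] \cong [A \times I, E] \times_{[A \times I, B]} [A, B],
\end{equation*}
so specifying a map $H \colon D \to [A, P^I_B(E)]$ is equivalent to giving a map $H' \colon D \times I \to [A, E]$ (the transpose of its first component) together with a compatible map $D \to [A, B]$. I would take the second component to be $f$ itself. Under this identification, the equation $[A, \ev_\partial] \circ H = (f_0, f_1)$ unwinds to the endpoint conditions $H'|_{D \times \{0\}} = f_0$ and $H'|_{D \times \{1\}} = f_1$, while the compatibility with $f$ becomes $[A, p] \circ H' = f \circ \mathrm{proj}_D$.

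Rather than target this last equation directly, I would strengthen it by demanding that $H'$ be a lift of $d \circ \mathrm{proj}_D$ along the trivial fibration $[A, E] \twoheadrightarrow C$; post-composing with $C \to [A, B]$ then recovers the weaker $[A, B]$-compatibility automatically. Thus the whole construction reduces to solving the lifting square
\begin{equation*}
\begin{tikzcd}[cramped]
D \times (\{0\} \sqcup \{1\}) \ar[r, "{(f_0, f_1)}"] \ar[d, hook] & {[A, E]} \ar[d, two heads, "\sim"{description}] \\
D \times I \ar[r, "{d \circ \mathrm{proj}_D}"'] & C
\end{tikzcd}
\end{equation*}
whose square commutes because $f_0$ and $f_1$ both lift $d$ through the trivial fibration.

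To conclude, the right-hand map is a trivial fibration by assumption, and the left-hand map is a cofibration: indeed, $\{0\} \sqcup \{1\} \hookrightarrow I$ is a cofibration since $I$ is a good cylinder object for the terminal object, and cofibrations are preserved by products with $D$ by the standing hypothesis. The resulting diagonal $H'$, paired with $f$, furnishes the desired $H$. The main subtlety is in the initial unwinding: tracing how $[A, \ev_\partial]$ acts under the identification $[A, P^I_B(E)] \cong [A \times I, E] \times_{[A \times I, B]} [A, B]$ and confirming that the endpoint conditions on $H'$ correctly encode the factorisation property, after which the rest of the argument is a direct lifting axiom.
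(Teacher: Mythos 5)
Your proposal is correct and follows essentially the same route as the paper: both reduce the construction of $H$ to the single lifting problem of the cofibration $D \sqcup D \cong D \times (\{0\}\sqcup\{1\}) \hookrightarrow D \times I$ (a cofibration by the good-cylinder hypothesis plus closure of cofibrations under products) against the trivial fibration $[A,E] \twoheadrightarrow C$, with top map $(f_0,f_1)$ and bottom map $d \circ \mathrm{proj}_D$, and then transpose the resulting lift $D \times I \to [A,E]$ through the identification $[A, P^I_B(E)] \cong [I,[A,E]] \times_{[I,[A,B]]} [A,B]$, pairing with $f$. The paper phrases the transposition as a Leibniz transpose of the pushout-product of $0 \to D$ with $I \to 1$, but this is the same adjunction computation you describe, and your endpoint-restriction verification matches the paper's closing diagram.
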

\begin{proof}
  Because $\bC$ is cartesian closed, products preserve colimits, so in
  particular $1 \sqcup 1$ is preserved by products.
  Also, by assumption, cofibrations are preserved by products.
  Thus, the image of cofibration $1 \sqcup 1 \hookrightarrow I$ under the
  product with $D$ is the cofibration $D \sqcup D \hookrightarrow D \times I$.
  But also because
  $\begin{tikzcd}[cramped, column sep=small]
    {[A,E]} \ar[r, "{\sim}"{pos=0.2}, two heads] & C
  \end{tikzcd}$ is assumed to be a trivial fibration, we can solve the following
  lifting problem by some solution $\ell$ as on the left so we get a diagram on
  the right.
  \begin{equation*}
    \begin{tikzcd}
      D \sqcup D
      \ar[d, hook]
      \ar[rr, "{(f_0,f_1)}"]
      &
      &
      {[A,E]}
      \ar[d, two heads, "{\sim}"{description}]
      \ar[dd, bend left=30]
      \\
      D \times I
      \ar[r]
      \ar[urr, dashed, "{\ell}"{description}]
      &
      D
      \ar[r, "{d}"{description}]
      \ar[rd, "{f}"']
      &
      C
      \ar[d]
      \\
      && {[A, B]}
    \end{tikzcd}
    \rightsquigarrow
    \begin{tikzcd}
      D \times I
      \ar[d]
      \ar[r, "{\ell}"]
      &
      {[A, E]} \ar[d]
      \\
      D
      \ar[r, "{f}"']
      &
      {[A, B]}
    \end{tikzcd}
  \end{equation*}
  Because $D \times I \to D$ is the pushout-product of $0 \to D$ with $I \to 1$,
  Leibniz transpose of the right diagram above gives
  \begin{equation*}
    \begin{tikzcd}
      0 \ar[d] \ar[r]
      &
      {[A,E]}
      \ar[d]
      \\
      D
      \ar[r, "{(\ell^\dagger, f)}"']
      &
      {[I, [A,E]] \times_{[I, [A, B]]} [A,B]}
    \end{tikzcd}
  \end{equation*}
  It follows
  that one may choose $H \colon D \to [A, P^I_{B}(E)]$ as
  $H \coloneqq
  D \xrightarrow{(\ell^\dagger,f)}
  [I, [A,E]] \times_{[I, [A, B]]} [A, B] \cong
  [A, [I,E]] \times_{[A, [I, B]]} [A, B] \cong
  [A, [I,E] \times_{[I, B]} B] \cong
  [A, P_{B}^I(E)]$.

  We conclude by verifying that this choice of $H$ indeed factors
  $(f_0,f_1) \colon D \to [A, E \times_{B} E]$.
  This follows because for $\epsilon=0,1$ one has
  \begin{equation*}
    \begin{tikzcd}[cramped, row sep=small, column sep=small]
      D && {[A,B]} \\
      {[A, P_{\underline{B}}^I(B)]} \\
      {[A, [I,B] \times_{[I, \underline{B}]} \underline{B}]} \\
      {  [A, [I,B]] \times_{[A, [I, \underline{B}]]} [A, \underline{B}]} & {[I, [A,B]] \times_{[I, [A, \underline{B}]]} [A, \underline{B}]} & {[I, [A,B]]}
      \arrow["{f_\epsilon}", from=1-1, to=1-3]
      \arrow["H"', from=1-1, to=2-1]
      \arrow["{\ell^\dagger}"{description}, from=1-1, to=4-3]
      \arrow["\cong"', tail reversed, from=2-1, to=3-1]
      \arrow[from=3-1, to=4-1]
      \arrow[from=4-1, to=4-2]
      \arrow[from=4-2, to=4-3]
      \arrow["{\ev_\epsilon}"', from=4-3, to=1-3]
    \end{tikzcd}
  \end{equation*}
\end{proof}

With the tools in place, we now show that structured lifts are structurally
homotopically unique if they exist.

\begin{theorem}\label{thm:lift-htpy-unique}
  Let $\bC$ be locally cartesian closed and equipped with a model structure in
  which:
  \begin{itemize}
    %
    %
    \item $I$ is a good, but not necessarily very good, cylinder object for the
    terminal object.
    \item Cofibrations are preserved by products.
    \item The pullback-power of (trivial cofibration, fibration)- and
    (cofibration, trivial fibration)-pairs are trivial fibrations.
  \end{itemize}
  Import the data $U \to V$ and $(e,b,b')$ and $F_1,F_2$ from
  \Cref{def:struct-lift-rel} where the fibrewise relation
  $R_{B_2}(E_2) \to E_2 \times_{B_2} E_2$ is taken to be the boundary evaluation
  map of the fibred $I$-path object
  $P_{B_2}^I(E_2) = [B_2 \times I, E_2]_{B_2} \to E_2 \times_{B_2} E_2$.

  Assume that $(U \to V, E_2 \to B_2)$ is either a (trivial cofibration,
  fibration) or (cofibration, trivial fibration) pair.
  Then, there exists a structured $P_{B_2}^I(E_2)$-witness $H$ from $F_1$ to
  $F_2$ via $(e,b,b')$.
\end{theorem}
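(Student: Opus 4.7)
The plan is to apply \Cref{lem:lift-htpy-unique-master} directly, after translating the structured witness condition into the language of representing objects. By the representability in \Cref{cor:stable-right-rep}, a structured $P^I_{B_2}(E_2)$-witness from $F_1$ to $F_2$ via $(e,b,b')$ amounts to a map
\[
H \colon [U, E_1] \times_{[U, B_1]} [V, B_1'] \to [V, P^I_{B_2}(E_2)]
\]
whose composition with the image under $[V,-]$ of $\ev_\partial \colon P^I_{B_2}(E_2) \to E_2 \times_{B_2} E_2$ returns the pair
\[
(f_0, f_1) \coloneqq \bigl([V, e] \cdot F_1,\; F_2 \cdot \tau\bigr),
\]
where $\tau$ denotes the evident map between representing objects for restricted lifting problems induced by the span morphism $(e,b,b')$.

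To invoke \Cref{lem:lift-htpy-unique-master}, I would take $A \coloneqq V$, $E \coloneqq E_2$, $B \coloneqq B_2$, $D \coloneqq [U, E_1] \times_{[U, B_1]} [V, B_1']$, and factorise $[V, E_2] \to [V, B_2]$ as
\[
[V, E_2] \xrightarrow{\sim} [U, E_2] \times_{[U, B_2]} [V, B_2] \to [V, B_2].
\]
Since $(U \to V, E_2 \to B_2)$ is either a (trivial cofibration, fibration)- or a (cofibration, trivial fibration)-pair, the pullback-power hypothesis of the theorem guarantees that the first leg is a trivial fibration, which is precisely the data required by the lemma.

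The core verification is that $f_0$ and $f_1$ agree once post-composed into $C \coloneqq [U, E_2] \times_{[U, B_2]} [V, B_2]$. Unwinding the definitions of $F_1$ and $F_2$ as solutions of restricted lifting problems: restriction along $U \to V$ yields $e \cdot u$ in both cases (because $F_1$ and $F_2$ solve their respective lifting problems), and post-composition with $E_2 \to B_2$ yields $b \cdot (B_1' \to B_1) \cdot v' = (B_2' \to B_2) \cdot b' \cdot v'$ by commutativity of the span morphism $(e,b,b')$. This common pair constitutes the required map $d \colon D \to C$, and \Cref{lem:lift-htpy-unique-master} applied to $(f_0, f_1, d)$ produces the desired witness $H \colon D \to [V, P^I_{B_2}(E_2)]$ factoring $(f_0, f_1)$ through $[V, \ev_\partial]$.

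The main obstacle is purely bookkeeping: one must identify $[V, P^I_{B_2}(E_2)]$ with the fibred $I$-path object of $[V, E_2] \to [V, B_2]$ so that $H$ genuinely witnesses a fibrewise relation between lifting solutions, and verify the coincidence $d = \pi \cdot f_0 = \pi \cdot f_1$ on the nose. Crucially, uniformity of $H$ in the external parameter $X$ is automatic, since the entire argument is carried out at the level of representing objects in $\bC$, bypassing any pointwise construction.
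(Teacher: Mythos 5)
Your proposal is correct and follows essentially the same route as the paper: both reduce the existence of the structured witness to \Cref{lem:lift-htpy-unique-master}, using the pullback-power hypothesis to see that $[V,E_2] \to [U,E_2] \times_{[U,B_2]} [V,B_2]$ is a trivial fibration and then working entirely at the level of the representing objects from \Cref{cor:stable-right-rep}, so that uniformity comes for free. Your write-up actually supplies more detail than the paper's (the explicit choices of $A$, $D$, $C$, $f_0$, $f_1$, $d$ and the check that $f_0$ and $f_1$ agree in $C$), all of which is consistent with the intended argument.
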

\begin{proof}
  We must show that the top square below commutes up to the fibrewise homotopy
  relation $P_{B_2}^I(E_2)$ with some witness $H$.
  \begin{equation*}
    \begin{tikzcd}[cramped]
      {[U, E_1] \times_{[U, B_1]} [A, B_1']} && {[U, E_2] \times_{[U, B_2]} [A, B_2']} \\
      & {[A, E_1]} && {[A, E_2]} \\
      {[U, E_1] \times_{[U, B_1]} [A, B_1]} && {[U, E_2] \times_{[U, B_2]} [A, B_2]} \\
      {[A, B_1]} && {[A, B_2]}
      \arrow[from=1-1, to=1-3]
      \arrow["{F_1}", dashed, from=1-1, to=2-2]
      \arrow[from=1-1, to=3-1]
      \arrow["{F_2}", dashed, from=1-3, to=2-4]
      \arrow[from=1-3, to=3-3]
      \arrow["H"{description}, phantom, from=2-2, to=1-3]
      \arrow[from=2-2, to=3-1]
      \arrow[from=2-4, to=3-3]
      \arrow[from=3-1, to=3-3]
      \arrow[from=3-1, to=4-1]
      \arrow[from=3-3, to=4-3]
      \arrow[from=4-1, to=4-3]
      \arrow[from=2-2, to=2-4, crossing over]
    \end{tikzcd}
  \end{equation*}
  But the condition that $(U \to V, E_2 \to B_2)$ is either a (trivial
  cofibration, fibration) or (cofibration, trivial fibration) pair ensure that
  $\begin{tikzcd}[cramped, column sep=1em] {[V, E_2]} \ar[r, two heads,
    "{\sim}"{pos=0.2}] & {[U, E_2] \times_{[U,B_2]} [V,E_2]}
  \end{tikzcd}$ is a trivial fibration, so the result follows by
  \Cref{lem:lift-htpy-unique-master}.
\end{proof}


\printbibliography

@article{jt07,
  author =       {Andr\'{e} Joyal and Myles Tierney},
  title =        {Quasi-categories vs Segal spaces},
  year =         {2007},
  journal =      {Contemporary Mathematics},
  volume =       {431},
  pages =        {277-326},
  url =          {https://www.ams.org/books/conm/431/}
}

@article{bg16,
  author =       {John Bourke and Richard Garner},
  title =        {Algebraic weak factorisation systems I: Accessible AWFS},
  year =         {2016},
  journal =      {Journal of Pure and Applied Algebra},
  volume =       {220},
  number =       {1},
  pages =        {108-147},
  url =
                  {https://www.sciencedirect.com/science/article/pii/S002240491500170X},
}

@article{rie11,
  author =       {Emily Riehl},
  title =        {Algebraic model structures},
  year =         {2011},
  journal =      {New York Journal of Mathematics},
  volume =       {17},
  pages =        {173-231},
  url =          {https://nyjm.albany.edu/j/2011/17-10p.pdf},
}

@article{gt06,
  author =       {Marco Grandis and Walter Tholen},
  title =        {Natural weak factorization systems},
  year =         {2006},
  journal =      {Archivum Mathematicum (Brno)},
  volume =       {42},
  pages =        {397-408},
  url =          {https://www.emis.de/journals/AM/06-4/tholen.pdf},
}

@article{voe17,
  author =       {Vladimir Voevodsky},
  title =        {The $(\Pi,\lambda)$-structures on the C-systems defined by
                  universe categories},
  year =         {2017},
  journal =      {Theory and Applications of Categories},
  volume =       {32},
  number =       {4},
  pages =        {113-121},
  url =          {http://www.tac.mta.ca/tac/volumes/32/4/32-04abs.html},
}

@article{kl21,
  author =       {Krzysztof Kapulkin and Peter LeFanu Lumsdaine},
  title =        {The Simplicial Model of Univalent Foundations (after
                  Voevodsky)},
  year =         {2021},
  journal =      {Journal of the European Mathematical Society},
  volume =       {23},
  number =       {6},
  pages =        {2071-2126},
  url =          {https://ems.press/journals/jems/articles/274693},
}

@book{bf22,
  title =        {Effective Kan fibrations in simplicial sets},
  author =       {Benno van den Berg and Eric Faber},
  year =         {2022},
  publisher =    {Springer International Publishing},
  series =       {Lecture notes in mathematics},
  edition =      {1},
  url =          {https://link.springer.com/book/10.1007/978-3-031-18900-5},
}

@misc{joy17,
  author =       {Andr\'{e} Joyal},
  title =        {Notes on Clans and Tribes},
  year =         {2017},
  eprint =       {1710.10238},
  archivePrefix ={arXiv},
  primaryClass = {math.CT},
  url =          {https://arxiv.org/abs/1710.10238},
}

@article{shu15,
  author =       {Michael Shulman},
  title =        {Univalence for inverse diagrams and homotopy canonicity},
  year =         {2015},
  journal =      {Mathematical Structures in Computer Science},
  volume =       {25},
  number =       {5},
  pages =        {1203–1277},
  url =          {https://doi.org/10.1017/S0960129514000565},
}

@article{awo18,
  author =       {Steve Awodey},
  title =        {Natural models of homotopy type theory},
  year =         {2018},
  journal =      {Mathematical Structures in Computer Science},
  volume =       {28},
  number =       {2},
  pages =        {241-286},
  url =          {https://doi.org/10.1017/S0960129516000268},
}

@article{gg08,
  author =       {Nicola Gambino and Richard Garner},
  title =        {The identity type weak factorisation system},
  year =         {2008},
  journal =      {Theoretical Computer Science},
  volume =       {409},
  number =       {1},
  pages =        {94-109},
  url =
                  {https://www.sciencedirect.com/science/article/pii/S0304397508006063},
}

@article{gar09,
  author =       {Richard Garner},
  title =        {Understanding the small object argument},
  year =         {2009},
  journal =      {Applied Categorical Structures},
  volume =       {17},
  number =       {3},
  pages =        {247-285},
  url =          {https://link.springer.com/article/10.1007/s10485-008-9137-4},
}

@article{uem23,
  author =       {Taichi Uemura},
  title =        {A general framework for the semantics of type theory},
  year =         {2023},
  journal =      {Mathematical Structures in Computer Science},
  volume =       {33},
  number =       {3},
  pages =        {134–179},
  url =          {https://doi.org/10.1017/S0960129523000208},
}

@inproceedings{dyb96,
  author =       {Dybjer, Peter},
  year =         {1996},
  title =        {Internal type theory},
  booktitle =    {Types for Proofs and Programs},
  publisher =    {Springer Berlin Heidelberg},
  pages =        {120-134},
  url =          {https://www.cse.chalmers.se/~peterd/papers/InternalTT.pdf},
}

@article {jac93,
    AUTHOR = {Jacobs, Bart},
     TITLE = {Comprehension categories and the semantics of type dependency},
   JOURNAL = {Theoret. Comput. Sci.},
  FJOURNAL = {Theoretical Computer Science},
    VOLUME = {107},
      YEAR = {1993},
    NUMBER = {2},
     PAGES = {169--207},
      ISSN = {0304-3975},
   MRCLASS = {18D30 (03G30 68Q55)},
  MRNUMBER = {1201808},
MRREVIEWER = {Ji\v{r}\'{\i} Ad\'{a}mek},
       DOI = {10.1016/0304-3975(93)90169-T},
       URL = {https://doi.org/10.1016/0304-3975(93)90169-T},
}

@article{ang+21,
  author =       {Angiuli, Carlo and Brunerie, Guillaume and Coquand, Thierry
                  and Harper, Robert and Hou (Favonia), Kuen-Bang and Licata,
                  Daniel R.},
  title =        {Syntax and models of Cartesian cubical type theory},
  journal =      {Mathematical Structures in Computer Science},
  volume =       {31},
  number =       {4},
  year =         {2021},
  pages =        {424–468},
  url =          {https://doi.org/10.1017/S0960129521000347},
}

@article{car86,
  author =       {John Cartmell},
  title =        {Generalised algebraic theories and contextual categories},
  journal =      {Annals of Pure and Applied Logic},
  volume =       {32},
  number =       {3},
  year =         {19986},
  pages =        {209-243},
  url =          {https://doi.org/10.1016/0168-0072(86)90053-9},
}

@article{cchm15,
  title =        {Cubical Type Theory: A Constructive Interpretation of the
                  Univalence Axiom},
  author =       {Cyril Cohen and Thierry Coquand and Simon Huber and Anders
                  M{\"o}rtberg},
  journal =      {FLAP},
  year =         {2015},
  volume =       {4},
  pages =        {3127-3170},
  url =          {https://doi.org/10.4230/LIPIcs.TYPES.2015.5},
}

@misc{awo+25,
  title =        {The equivariant model structure on cartesian cubical sets},
  author =       {Steve Awodey and Evan Cavallo and Thierry Coquand and Emily
                  Riehl and Christian Sattler},
  year =         {2025},
  eprint =       {2406.18497},
  archivePrefix ={arXiv},
  primaryClass = {math.AT},
  url =          {https://arxiv.org/abs/2406.18497},
}

@unpublished{jel25,
  author =       {Vít Jel\'{i}nek},
  year =         {2025},
  title =        {Bicolimit Presentations of Type Theories},
  eventtitle =   {International Category Theory Conference},
  venue =        {Masaryk University},
  howpublished = {Conference talk},
  url =
                  {https://conference.math.muni.cz/ct2025/data/uploads/abstracts/jelinek.pdf},
}

\end{document}